
\documentclass{amsart}

\usepackage{graphicx}
\usepackage{amsmath}
\usepackage{amsthm}
\usepackage{amsfonts}
\usepackage{amssymb, amscd}

\usepackage{color}

\usepackage[all]{xy}

\setlength{\textwidth}{160mm} \setlength{\textheight}{220mm}
\setlength{\parindent}{8mm} \setlength{\headheight}{0pt}
\setlength{\headsep}{30pt} \setlength{\topmargin}{1mm}
\setlength{\oddsidemargin}{-.10pt}
\setlength{\evensidemargin}{-1.0pt} \setlength{\parindent}{0pt}

\vfuzz2pt 
\hfuzz2pt 
\newtheorem{thm}{Theorem}[section]

\newtheorem{lem}[thm]{Lemma}
\newtheorem{proposition}[thm]{Proposition}
\newtheorem{example}[thm]{Example}
\theoremstyle{definition}
\newtheorem{definition}[thm]{Definition}
\theoremstyle{remark}
\newtheorem{remark}[thm]{Remark}
\numberwithin{equation}{section}

\newcommand{\K}{\mathbb K}

\newcommand{\dl}{\displaystyle}

\begin{document}

\title[   Cohomology of Hom-Leibniz and $n$-ary Hom-Nambu-Lie superalgebras]
{Cohomology of Hom-Leibniz and $n$-ary Hom-Nambu-Lie superalgebras}%
\author{K.Abdaoui,  S. Mabrouk and  A. Makhlouf}%
\address{K.Abdaoui , Universit\'{e} de Sfax,  Facult\'{e} des Sciences, Sfax Tunisia}%
\email{abdaouielkadri@hotmail.com}
\address{Sami  Mabrouk, Universit\'{e} de Gafsa,  Facult\'{e} des Sciences, Gafsa Tunisia}%
\email{ Mabrouksami00@yahoo.fr}

\address{Abdenacer Makhlouf, Universit\'{e} de Haute Alsace,  Laboratoire de Math\'{e}matiques, Informatique et Applications,
4, rue des Fr\`{e}res Lumi\`{e}re F-68093 Mulhouse, France}%
\email{Abdenacer.Makhlouf@uha.fr}

\thanks {
}

\subjclass[2000]{17A42,16E40,16S80}
\keywords{$n$-ary algebra, $n$-ary Hom-Nambu-Lie algebra, deformation, cohomology, Leibniz algebra.}
%
\begin{abstract}
The aim of this paper is to study the cohomology of Hom-Leibniz superalgebras. We construct the $q$-deformed Heisenberg-Virasoro superalgebra of Hom-type and  provide as application the computations
of the derivations and second cohomology group. Moreover, we  extend to graded case the Takhtajan's construction
of a cohomology of $n$-ary Hom-Nambu-Lie algebras starting from  cohomology of Hom-Leibniz algebras.
\end{abstract}
\maketitle
\begin{center}
\emph{To the memory of
Faouzi AMMAR}
\end{center}


\section*{Introduction}
In \cite{loday}, J.-L. Loday introduced a non-skewsymmetric version of Lie algebras,
whose bracket satisfies the Leibniz identity. They are called Leibniz algebras.
The Leibniz identity, combined with antisymmetry, is a variation of the Jacobi
identity, hence Lie algebras are skewsymmetric Leibniz algebras.
Recently  Leibniz superalgebras were studied
in \cite{AlbeverioAyupovOmirov,Dzhumadil'daev} and \cite{LiuHu}, etc..\\

Hartwig, Larsson and Silvestrov introduced, in \cite{HartwigLarssonSilvestrov}, Hom-Lie algebras  as a part
of a study of deformations of  Witt and  Virasoro algebras,  while the graded case was
considered in \cite{LarssonSilvestrov}. A Hom-Leibniz superalgebra is a triple
$(\mathcal{A}, [\cdot, \cdot],\alpha)$, in which $\mathcal{A}$ is a $\mathbb{Z}_2$-graded vector space and $\alpha$ is an  even endomorphism of $\mathcal{A}$  satisfying an $\alpha$-twisted variant of the Jacobi identity
$$[[x,y],\alpha(z)]=[\alpha(x),[y,z]]-(-1)^{|x||y|}[\alpha(y),[x,z]],\ \ \forall\ x,\ y,\ z\in \mathcal{H}(\mathcal{A}).$$
Generalizations of Leibniz algebras and Lie algebras, called $n$-ary Nambu algebras,  appeared first in statistical mechanics \cite{Baxter1, Baxter2}. Moreover, Nambu mechanics \cite{Nam}
involves an $n$-ary product $[\cdot,...,\cdot]$ that satisfies the $n$-ary Nambu identity, which is an $n$-ary generalization
of the Jacobi identity.
\begin{eqnarray*}
  && \big[x_1,....,x_{n-1},[y_1,....,y_{n}]\big]= \\
&& \sum_{i=1}^{n}\big[y_1,....,y_{i-1},[x_1,....,x_{n-1},y_i]
  ,y_{i+1},...,y_n\big],\ \forall\ (x_1,..., x_{n-1})\in \mathcal{N}^{ n-1},\ (y_1,...,  y_n)\in \mathcal{N}^{ n}.
  \end{eqnarray*}
These generalizations include $n$-ary Hom-algebra structures generalizing the $n$-ary
algebras of Lie type such as $n$-ary Nambu algebras, $n$-ary Nambu-Lie algebras and $n$-ary Lie algebras. See also \cite{makh, yau1, yau2, yau3} .\\

The purpose of this paper is to study Hom-Leibniz superalgebras and $n$-ary Hom-Nambu-Lie superalgebras. Section 1 is dedicated to basics, we recall  definition of the Hom-Leibniz superalgebras, introduce  the representations and the derivations of  Hom-Leibniz superalgebras and prove as application a construction of  $q$-deformed Heisenberg-Virasoro superalgebra. In the second Section we provide a cohomology of Hom-Leibniz superalgebras and compute the derivations and scalar
second cohomology group of $q$-deformed Heisenberg-Virasoro superalgebra. In the last Section,  we show a relationship  between Hom-Leibniz superalgebras and $n$-ary Hom-Nambu-Lie superalgebras. Moreover, we define a cohomology of $n$-ary Hom-Nambu-Lie superalgebras and  we generalize to twisted situation and graded case, the process used by Daletskii and Takhtajan \cite{Takhtajan0} to relate cohomologies of $n$-ary Hom-Nambu-Lie superalgebras and  Hom-Leibniz superalgebras.
\section{Hom-superdiagebras and Hom-Leibniz superalgebras} Throughout this paper, we will for simplicity of exposition assume that $\mathbb{K}$ is an algebraically closed
field of characteristic zero, even though for most of the general definitions and results in the paper this
assumption is not essential.

 A vector space $V$ is said to be a $\mathbb{Z}_2$-graded if we are given a family $(V_i)_{i\in\mathbb{Z}_2}$ of vector subspace of $V$ such that $V=V_0\oplus V_1.$  The symbol $|x|$ always implies that $x$ is a
$\mathbb{Z}_2$-homogeneous element and $|x|$ is the $\mathbb{Z}_2$-degree. In the sequel, we will denote by $\mathcal{H(A)}$ the set of all homogeneous elements of $\mathcal{A}$ and $\mathcal{H}( \mathcal{A}^n)$ refers to the set of tuples with homogeneous elements.
\subsection{Definitions}
\begin{definition}
A Hom-Leibniz superalgebra is a $\mathbb{Z}_2$-graded vector space $\mathcal{A}=\mathcal{A}_0\oplus \mathcal{A}_1$ over a field $\K$ equipped with a bilinear map $[\cdot,\cdot]:\mathcal{A}\times \mathcal{A}\rightarrow \mathcal{A}$, such that $[\mathcal{A}_i,\mathcal{A}_j]\subset \mathcal{A}_{i+j}$,$\forall\ i,j\in \mathbb{Z}_2$ and an even linear map $\alpha:\mathcal{A}\rightarrow \mathcal{A}$ satisfying
\begin{equation}\label{super-LeiIdent}
[[x,y],\alpha(z)]=[\alpha(x),[y,z]]-(-1)^{|x||y|}[\alpha(y),[x,z]],\ \ \forall\ x,\ y,\ z\in \mathcal{H}(\mathcal{A}).
\end{equation}
The identity \eqref{super-LeiIdent} is called \emph{Super-Hom-Leibniz} identity.
\end{definition}
Suppose that $(\mathcal{A},[\cdot,\cdot],\alpha)$ is a Hom-Leibniz superalgebra. For any $x \in \mathcal{H}(\mathcal{A})$, we define $ad_x \in End_\mathbb{K}(\mathcal{A})$ by $ad_x(y)=[x,y]$, for any $y \in \mathcal{H}(\mathcal{A})$. Then the Super-Hom-Leibniz identity \eqref{super-LeiIdent} can be written as
\begin{eqnarray}\label{ad-hom}
ad_{\alpha(z)}([x, y])&=& [ad_z(x), \alpha(y)] + (-1)^{|z||x|}[\alpha(x),ad_z(y)]
\end{eqnarray}
for all $x, y \in\mathcal{H}(\mathcal{A})$.
\begin{definition}\begin{enumerate} \item A Hom-Leibniz superalgebra $(\mathcal{A},[.,.],\alpha)$ is  called Hom-Lie superalgebra if the bracket $[.,.]$ is skew-symmetric that is
$$[x,y]=-(-1)^{|x||y|}[y,x],\ \ \forall\ x,\ y\in \mathcal{H}(\mathcal{A}).$$
\item A Hom-Leibniz superalgebra $(\mathcal{A},[.,.],\alpha)$ is  called \textbf{multiplicative} Hom-Leibniz superalgebra if
$$\alpha([x,y])=[\alpha(x),\alpha(y)],\ \ \forall\ x,\ y\in \mathcal{H}(\mathcal{A}).$$
\end{enumerate}
\end{definition}
\begin{definition}
Let $f:(\mathcal{A},[\cdot,\cdot],\alpha)\longrightarrow(\mathcal{A}',[\cdot,\cdot]',\alpha')$ be a map. The map $f$ is called
\begin{enumerate}
  \item  even $($resp. odd $)$ map if $f(\mathcal{A}_i)\subset \mathcal{A}_i'$ $($resp. $f(\mathcal{A}_i)\subset \mathcal{A}_{i+1}'$ $)$, for $i=0,1$.
  \item a \textbf{weak morphism} of Hom-Leibniz
superalgebras  if
  \begin{eqnarray*}f ([x,y])&=&
[f (x),f (y)]'.
\end{eqnarray*}
  \item A \textbf{ morphism} of Hom-Leibniz
superalgebras  is a weak morphism of Hom-Leibniz
superalgebras  such that
  \begin{eqnarray*}
f \circ \alpha&=&\alpha'\circ f.
\end{eqnarray*}
  \item An \textbf{automorphism} of Hom-Leibniz
superalgebras  is a morphism of Hom-Leibniz
superalgebras  which is  bijective.
\end{enumerate}
\end{definition}
\subsection{Representation of Hom-Leibniz superalgebras}
Let $(\mathcal{A},[\cdot,\cdot],\alpha)$ be a Hom-Leibniz superalgebra.
\begin{definition}
A linear map $D:\mathcal{A}\rightarrow \mathcal{A}$ is called  $\alpha^k$-derivation of Hom-Leibniz superalgebra for $k\geq0$ if
\begin{equation}
D([x,y])=[D(x),\alpha^k(y)]+(-1)^{|x||D|}[\alpha^k(x),D(y)],\ \ \textrm{for\ all} \ x,y\in \mathcal{H}(\mathcal{A}).
\end{equation}
\end{definition}
\begin{definition}We call a $\mathbb{Z}_2$-graded space $V=V_0\oplus V_1$ a module over $\mathcal{A}$ if there are linear map $\beta:M\rightarrow M$ and two bilinear maps
$$[\cdot,\cdot]:\mathcal{A}\times V\longrightarrow V \ \textrm{and} \ [\cdot,\cdot]:V\times \mathcal{A}\longrightarrow V $$
satisfying the following five axioms, for all $x,y\in \mathcal{H}(\mathcal{A}),~~~v\in \mathcal{H}(V)$
\begin{align}
&\label{eg1-rep}\beta([v,x])=[\beta(v),\alpha(x)],\\
&\label{eg2-rep}\beta([x,v])=[\alpha(x),\beta(v)],\\
&\label{eg3-rep}[[x,y],\beta(v)]=[\alpha(x),[y,v]]-(-1)^{|x||y|}[\alpha(y),[x,v]],\\
&\label{eg4-rep}[[x,v],\alpha(y)]=[\alpha(x),[v,y]]-(-1)^{|x||v|}[\beta(v),[x,y]],\\
&\label{eg5-rep}[[v,x],\alpha(y)]=[\beta(v),[x,y]]-(-1)^{|v||x|}[\alpha(x),[v,y]].
\end{align}
\end{definition}
\begin{example}
Let $(\mathcal{A},[\cdot,\cdot],\alpha)$ be a multiplicative Hom-Leibniz superalgebra. $\mathcal{A}$ is a module over $\mathcal{A}$, where the operator  is the twist map $\alpha$.
\end{example}
\subsection{$q$-deformed Heisenberg-Virasoro superalgebra of Hom-type} $\ $\\
 $\ \ $
In the following, we describe $q$-deformed Heisenberg-Virasoro superalgebra of Hom-type and compute its
derivations and the second scalar cohomology group.\\
Let $\mathcal{A}$ be the complex superalgebra $\mathcal{A} = \mathcal{A}_0\oplus \mathcal{A}_1$ where $\mathcal{A}_0 = \mathbb{C}[t, t^{
-1}]$ is the Laurent polynomials
in one variable and $\mathcal{A}_1 = \theta \mathbb{C}[t, t^{
-1}]$, where $\theta$ is the Grassman variable $(\theta^2 = 0)$. We assume that $t$
and $\theta$ commute. The generators of $\mathcal{A}$ are of the form $t^n$ and $\theta t^n$ for $n \in \mathbb{Z}$.\\
Let $q\in \mathbb{C}\backslash\{0, 1\}$ and $n\in  \mathbb{N}$, we set $\{n\} = \frac{1-q^n}{1-q}$, a $q$-number. The $q$-numbers have the following
properties $\{n + 1\} = 1 + q\{n\} = \{n\} + q^n$ and $\{n + m\} = \{n\} + q^n\{m\}$.

Let $\mathfrak{A}_q$  be a superspace with basis $\{L_m,\ I_m| m\in\mathbb{Z}\}$ of parity $0$ and $\{ G_m,\ T_m| m\in\mathbb{Z}\}$ of parity $1$, where $L_m=-t^mD,\ I_m=-t^m,\ G_m=-\theta t^mD,\ T_m=-\theta t^m$ and $D$ is a $q$-derivation on $\mathcal{A}$ such that
$$D(t^m)=\{m\}t^m,\ D(\theta t^m)=\{m+1\}\theta t^m.$$
We define the bracket  $[\cdot,\cdot]_q:\mathfrak{A}_q\times\mathfrak{A}_q\longrightarrow\mathfrak{A}_q$, with respect the super-skew-symmetry for $n,m\in\mathbb{Z}$  by
\begin{align}
&\label{crochet1}[L_m,L_n]_q=(\{m\}-\{n\})L_{m+n},\\
&\label{crochet2}[L_m,I_n]_q=-\{n\}I_{m+n},\\
&\label{crochet3}[L_m,G_n]_q=(\{m\}-\{n+1\})G_{m+n},\\
&\label{crochet4}[I_m,G_n]_q=\{m\}T_{m+n},\\
&\label{crochet5}[L_m,T_n]_q=-\{n+1\}T_{m+n},\\
&\label{crochet6}[I_m,I_n]_q=[I_m,T_n]_q=[T_m,G_n]_q=[T_m,T_n]_q=[G_m,G_n]_q=0.
\end{align}

Let $\alpha$ be an even linear map on $\mathfrak{A}_q$  defined on the generators by
\begin{eqnarray*}
\alpha_q(L_n)&=&(1+q^n)L_n,\hskip0.5cm \alpha_q(I_n)=(1+q^n)I_n,\\
\alpha_q(T_n)&=&(1+q^{n+1})G_n,~~\alpha_q(T_n)=(1+q^{n+1})T_n.
\end{eqnarray*}
\begin{proposition}
The triple $(\mathfrak{A}_q, [\cdot,\cdot]_q, \alpha_q)$ is a Hom-Lie superalgebra, called the
\textbf{\emph{$q$-deformed Heisenberg-Virasoro superalgebra of Hom-type}}.
\end{proposition}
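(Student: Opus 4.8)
The plan is to check, separately, the two conditions that make $(\mathfrak{A}_q,[\cdot,\cdot]_q,\alpha_q)$ a Hom-Lie superalgebra: that $[\cdot,\cdot]_q$ is super-skew-symmetric, and that the triple satisfies the Super-Hom-Leibniz identity \eqref{super-LeiIdent}. Super-skew-symmetry is essentially built into the construction, since the brackets \eqref{crochet1}--\eqref{crochet6} are prescribed on a fixed set of ordered pairs of generators and then extended by the rule $[x,y]_q=-(-1)^{|x||y|}[y,x]_q$; the only point to verify is that the explicit formulas are internally consistent with this rule (for instance $[L_m,L_n]_q=(\{m\}-\{n\})L_{m+n}$ is anti-symmetric under $m\leftrightarrow n$, and each vanishing odd--odd bracket is consistent with its skew partner). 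Since every map in sight is $\K$-bilinear and $\alpha_q$ acts diagonally on the basis $\{L_m,I_m,G_m,T_m\}$ by scalars, it then suffices to establish \eqref{super-LeiIdent} on triples of these basis generators.

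With skew-symmetry in hand, \eqref{super-LeiIdent} becomes equivalent to the super Hom-Jacobi identity, which carries a graded cyclic symmetry in $x,y,z$; I would use this symmetry together with the large number of vanishing brackets to collapse the case analysis. The only pairs of generator-types with a nonzero bracket are $(L,L)$, $(L,I)$, $(L,G)$, $(L,T)$ and $(I,G)$. Tracing which iterated brackets can survive, one finds that the identity reduces to the trivial $0=0$ for every multiset of types \emph{except} $\{L,L,L\}$, $\{L,L,I\}$, $\{L,L,G\}$, $\{L,L,T\}$ and $\{L,I,G\}$. Each of these lands in a single homogeneous component $L_{s}$, $I_{s}$, $G_{s}$ or $T_{s}$ with $s$ the sum of the three indices, and within a fixed multiset the various index assignments and orderings collapse, by relabelling and skew-symmetry, to one representative scalar identity among $q$-numbers.

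The computational engine is the pair of addition formulas recorded before the statement, $\{m+n\}=\{m\}+q^{m}\{n\}$ and $\{n+1\}=\{n\}+q^{n}=1+q\{n\}$, which also control the $\alpha_q$-eigenvalues $1+q^{n}$ (note e.g. $\{2n\}=(1+q^{n})\{n\}$). For the triple $(L_a,L_b,L_c)$, after cancelling the common factor $L_{a+b+c}$, the identity \eqref{super-LeiIdent} reads
\begin{equation*}
(1+q^{c})(\{a\}-\{b\})(\{a+b\}-\{c\})=(1+q^{a})(\{b\}-\{c\})(\{a\}-\{b+c\})-(1+q^{b})(\{a\}-\{c\})(\{b\}-\{a+c\}),
\end{equation*}
and writing $\{a+b\}=\frac{1-q^{a+b}}{1-q}$ and so on reduces both sides to the same polynomial in $q^{a},q^{b},q^{c}$. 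The remaining four cases are handled by exactly this mechanism: expand every $q$-number of a summed index by the addition formula and confirm that the prefactors $1+q^{\bullet}$ redistribute correctly.

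I expect the main obstacle to be purely bookkeeping rather than conceptual. The delicate case is $\{L,I,G\}$, the only nontrivial case involving the odd sector: there one must track the Koszul signs $(-1)^{|x||y|}$ carefully and, crucially, make sure that the shifted $q$-numbers $\{n+1\}$ appearing in the $G$- and $T$-brackets \eqref{crochet3}, \eqref{crochet5} pair with the matching $\alpha_q$-eigenvalues $1+q^{n+1}$ on $G_n$ and $T_n$. Once skew-symmetry and the reduction to the five generator-multisets are established, the whole proposition is a finite, if slightly laborious, verification of scalar $q$-number identities, and the only place an error is likely to slip in is in the signs and index shifts of the odd generators.
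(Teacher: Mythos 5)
Your proposal is correct and follows essentially the same route as the paper: the paper likewise treats super-skew-symmetry as built into the definition of $[\cdot,\cdot]_q$ and verifies the equivalent graded cyclic identity $\circlearrowleft_{X,Y,Z}(-1)^{|X||Z|}[\alpha_q(X),[Y,Z]_q]_q=0$ on triples of basis generators via the $q$-number addition formulas, checking $(L_m,I_n,G_k)$ and $(L_m,L_r,T_l)$ explicitly and asserting the remaining cases are analogous. Your enumeration of the five nontrivial type multisets $\{L,L,L\}$, $\{L,L,I\}$, $\{L,L,G\}$, $\{L,L,T\}$, $\{L,I,G\}$ (all other combinations vanishing term by term) just makes precise the paper's closing ``same calculation'' remark, and your displayed identity for the $(L_a,L_b,L_c)$ case is correct.
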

\begin{proof}We just check
\begin{equation}
\circlearrowleft_{X,Y,Z}(-1)^{|X||Z|}[\alpha_q(X),[Y,Z]_q]_q=0,\ \forall \ X,\ Y,\ Z\in \mathcal{H}(\mathfrak{A}_q).
\end{equation}
Let $L_m,\ L_r,\ I_n,\ G_k,\ T_l$ be a homogeneous elements of $\mathfrak{A}_q$, then we have
\begin{eqnarray*}
&&\circlearrowleft_{L_m,I_n,G_k}(-1)^{|L_m||G_k|}[\alpha_q(L_m),[I_n,G_k]_q]_q\\
&=& \circlearrowleft_{L_m,I_n,G_k}[\alpha_q(L_m),[I_n,G_k]_q]_q\\
&=& [\alpha_q(L_m),[I_n,G_k]_q]_q+[\alpha_q(I_n),[G_k,L_m]_q]_q+[\alpha_q(G_k),[L_m,I_n]_q]_q\\
&=& (1+q^m)[L_m,[I_n,G_k]_q]_q+(1+q^n)[I_n,[G_k,L_m]_q]_q+(1+q^{k+1})[G_k,[L_m,I_n]_q]_q\\
&=& \Big(-(1+q^m)\{n\}\{n+k+1\}+(1+q^n)(\{k+1\}-\{m\})\{n\}+(1+q^{k+1})\{n\}\{m+n\}\Big)T_{m+n+k}\\
&=& 0,
\end{eqnarray*}
and
\begin{eqnarray*}
&& \circlearrowleft_{L_m,L_r,T_l}(-1)^{|L_m||T_l|}[\alpha_q(L_m),[L_r,T_l]_q]_q\\
&=& \circlearrowleft_{L_m,L_r,T_l}[\alpha_q(L_m),[L_r,T_l]_q]_q\\
&=& [\alpha_q(L_m),[L_r,T_l]_q]_q+[\alpha_q(L_r),[T_l,L_m]_q]_q+[\alpha_q(T_l),[L_m,L_r]_q]_q\\
&=& (1+q^m)[L_m,[L_r,T_l]_q]_q+(1+q^r)[L_r,[T_l,L_m]_q]_q+(1+q^{l+1})[T_l,[L_m,L_r]_q]_q\\
&=& \Big((1+q^m)\{l+1\}\{r+l+1\}-(1+q^r)\{l+1\}\{m+l+1\}-(1+q^{l+1})(\{m\}-\{r\})\{l\}\Big)T_{m+n+l}\\
&=& 0.
\end{eqnarray*}
By the same calculation we can proof the other equality.
This ends the proof.
\end{proof}
\section{Cohomology of Hom-Leibniz superalgebras}
In the following we define a cohomology of Hom-Leibniz superalgebras and    describe the cohomology space $H^2_0(\mathfrak{A}_q,\mathbb{C})$ of $q$-deformed Heisenberg-Virasoro superalgebra $\mathfrak{A}_q$ with trivial representation.
 \subsection{Definitions} Let $(\mathcal{A},[\cdot,\cdot],\alpha)$ be a Hom-Leibniz superalgebra and $(V,[\cdot,\cdot]_V,\beta)$ be a representation of $(\mathcal{A},[\cdot,\cdot],\alpha)$.

  \begin{definition}
  A $n$-hom-cochain on $\mathcal{A}$ with values in $V$ is defined to be an $n$-cochain $\varphi \in C^{n}(\mathcal{A},V)$ such that it is
compatible with $\alpha$ and $\beta$ in the sense that $\beta\circ \varphi = \varphi\circ \alpha^{\otimes n}$, i.e.
\begin{eqnarray}\label{alpha-beta-fi}
\beta \circ \varphi(x_{1},...,x_{n}) = \varphi(\alpha(x_{1}),...,\alpha(x_{n})).
\end{eqnarray}\end{definition}
We denote by $C^{n}_{\alpha,\beta}(\mathcal{A},V)$ the set of $n$-hom-cochains:
$ C^{n}_{\alpha,\beta}(\mathcal{A},M)= \{\varphi \in C^{n}(\mathcal{A},V) : \beta \circ \varphi = \varphi \circ \alpha^{\otimes n}\}.$
Let $x_1,... , x_k$ be $k$ homogeneous elements of $\mathcal{A}$. We denote by $|(x_1, ... , x_k)| = |x_1|+ ...+|x_k|~~(\mod 2)$ the parity of an element $(x_1, . . . , x_k)$ in $\mathcal{H}(\mathcal{A}^k)$.\\
A $n$-hom-cochain  $\varphi$ is called even (resp. odd) when we have $\varphi(x_1, . . . , x_k)\in V_{0}$ (resp. $\varphi(x_1, . . . , x_k)\in V_{1})$
for all even (resp odd ) element $(x_1 , . . . , x_k )\in \mathcal{H}(\mathcal{A}^k)$.
\begin{definition}
We define a map $\delta^{n}:C^{n}(\mathcal{A},V)\longrightarrow C^{n+1}(\mathcal{A},V)$ by setting
\begin{eqnarray}
& &\label{Leibnizcohomo}\delta^{n}(f)(x_{1},...,x_{n+1})\\
&=&\Big[\alpha^{n-1}(x_{1}),\varphi(x_{2},...,x_{n+1})\Big]\nonumber\\
    &+& \sum\limits_{i=2}^{n+1}(-1)^{i+|x_{i}|(|\varphi|+|x_{i+1}|+...+|x_{n+1}|)}
    \Big[\varphi(x_{1},...,\widehat{x_{i}},...,x_{n+1}),\alpha^{n-1}(x_{i})\Big]\nonumber\\
    &+&\sum\limits_{1\leq i < j \leq n+1}(-1)^{j+1+|x_{j}|(|x_{i+1}|+...+|x_{j-1}|)}\varphi\Big(\alpha(x_{1}),...,\alpha(x_{i-1}),[x_i,x_j],
    \alpha(x_{i+1}),...,\widehat{x_{j}},...,\alpha(x_{n+1})\Big)\nonumber
 \end{eqnarray}
 \end{definition}
In the sequel we assume that the Hom-Leibniz superalgebra $(\mathcal{A},[\cdot,\cdot],\alpha)$ is multiplicative.
\begin{lem}With the above notations, for any $\varphi \in C^{n}_{\alpha,\beta}(\mathcal{A},V)$, we have
$$\delta^{n}(\varphi)\circ \alpha = \beta \circ \delta^{n}(\varphi) .$$

\end{lem}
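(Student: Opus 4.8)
The plan is to read the identity $\delta^{n}(\varphi)\circ\alpha=\beta\circ\delta^{n}(\varphi)$ as the hom-cochain compatibility \eqref{alpha-beta-fi} for the $(n+1)$-cochain $\delta^{n}(\varphi)$, i.e. to verify $\delta^{n}(\varphi)(\alpha(x_{1}),\dots,\alpha(x_{n+1}))=\beta\big(\delta^{n}(\varphi)(x_{1},\dots,x_{n+1})\big)$, and to do so term-by-term by expanding the left-hand side through formula \eqref{Leibnizcohomo} and matching each of its three families of summands against $\beta$ of the corresponding summand on the right. Since $\alpha$ is even we have $|\alpha(x_{i})|=|x_{i}|$ for every $i$, so the substitution $x_{i}\mapsto\alpha(x_{i})$ leaves all the signs in \eqref{Leibnizcohomo} unchanged; this is what allows a coefficient-by-coefficient comparison. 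The three tools used throughout are multiplicativity, $\alpha([x,y])=[\alpha(x),\alpha(y)]$; the module axioms \eqref{eg1-rep} and \eqref{eg2-rep}, namely $\beta([v,x])=[\beta(v),\alpha(x)]$ and $\beta([x,v])=[\alpha(x),\beta(v)]$; and the hom-cochain condition \eqref{alpha-beta-fi}, $\beta\circ\varphi=\varphi\circ\alpha^{\otimes n}$.

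First I would treat the leading term $[\alpha^{n-1}(\alpha(x_{1})),\varphi(\alpha(x_{2}),\dots,\alpha(x_{n+1}))]$. Using $\alpha^{n-1}\circ\alpha=\alpha\circ\alpha^{n-1}$ on the first slot and \eqref{alpha-beta-fi} on the second, this becomes $[\alpha(\alpha^{n-1}(x_{1})),\beta(\varphi(x_{2},\dots,x_{n+1}))]$, and then \eqref{eg2-rep} factors $\beta$ out, giving $\beta([\alpha^{n-1}(x_{1}),\varphi(x_{2},\dots,x_{n+1})])$, precisely $\beta$ of the leading term of $\delta^{n}(\varphi)(x_{1},\dots,x_{n+1})$. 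The middle family is symmetric: in the summand $[\varphi(\alpha(x_{1}),\dots,\widehat{\alpha(x_{i})},\dots,\alpha(x_{n+1})),\alpha^{n-1}(\alpha(x_{i}))]$ the hom-cochain condition rewrites the $\varphi$-factor as $\beta(\varphi(x_{1},\dots,\widehat{x_{i}},\dots,x_{n+1}))$ and $\alpha^{n-1}\circ\alpha=\alpha\circ\alpha^{n-1}$ rewrites the second factor, after which \eqref{eg1-rep} pulls $\beta$ out on the left.

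For the double sum, the key observation is that substituting $\alpha(x_{k})$ for each $x_{k}$ sends the bracket entry to $[\alpha(x_{i}),\alpha(x_{j})]=\alpha([x_{i},x_{j}])$ by multiplicativity, while each surviving $\alpha(x_{k})$ becomes $\alpha(\alpha(x_{k}))$. Hence every one of the $n$ arguments of $\varphi$ is $\alpha$ applied to the corresponding argument of $\varphi(\alpha(x_{1}),\dots,[x_{i},x_{j}],\dots,\widehat{x_{j}},\dots,\alpha(x_{n+1}))$, so \eqref{alpha-beta-fi} directly yields $\beta$ of that summand. Adding the three matched families and comparing with \eqref{Leibnizcohomo} gives the claim.

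The only genuine obstacle is bookkeeping: one must confirm that the evenness of $\alpha$ leaves every sign intact and that the correct module axiom is applied to each bracket type, namely \eqref{eg2-rep} for the leading term, where $\mathcal{A}$ acts on $V$ from the left, and \eqref{eg1-rep} for the middle family, where the action is from the right. No deeper structural input is needed; in particular neither the super-Hom-Leibniz identity \eqref{super-LeiIdent} nor the representation axioms \eqref{eg3-rep}--\eqref{eg5-rep} enter here, as those become relevant only for the cocycle property $\delta^{n+1}\circ\delta^{n}=0$.
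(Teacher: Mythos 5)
Your proof is correct and follows essentially the same route as the paper's: expand $\delta^{n}(\varphi)$ on the $\alpha$-shifted arguments via \eqref{Leibnizcohomo}, use multiplicativity to rewrite $[\alpha(x_i),\alpha(x_j)]=\alpha([x_i,x_j])$, apply the hom-cochain condition \eqref{alpha-beta-fi} to each $\varphi$-factor, and pull $\beta$ out of the two bracket families with the module axioms \eqref{eg1-rep} and \eqref{eg2-rep}. Your added observations---that evenness of $\alpha$ leaves every sign intact, and that neither \eqref{super-LeiIdent} nor \eqref{eg3-rep}--\eqref{eg5-rep} is needed at this stage---are accurate and simply make explicit what the paper's computation leaves implicit.
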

\begin{proof}Let $\varphi \in C^{n}_{\alpha,\beta}(\mathcal{A},V)$ and $(x_{1},...,x_{n+1})\in \mathcal{H}(\mathcal{A}^{n+1})$.
\begin{eqnarray*}
   && \delta^{n}(\varphi)\circ \alpha \Big(x_{1},...,x_{n+1}\Big)\\
   &=& \delta^{n}(f)\Big(\alpha(x_{1}),...,\alpha(x_{n+1})\Big)\\&=&\Big[\alpha^{n}(x_{1}),\varphi\Big(\alpha(x_{2}),...,\alpha(x_{n+1})\Big)\Big]
    + \sum\limits_{i=2}^{n+1}(-1)^{i+|x_{i}|(|\varphi|+|x_{i+1}|+...+|x_{n+1}|)}
    \Big[\varphi(\alpha(x_{1}),...,\widehat{x_{i}},...,\alpha(x_{n+1})),\alpha^{n}(x_{i})\Big]\\
    &+&\sum\limits_{1\leq i < j \leq n+1}(-1)^{j+1+|x_{j}|(|x_{i+1}|+...+|x_{j-1}|)}\varphi\Big(\alpha^{2}(x_{1}),...,\alpha^{2}(x_{i-1}),[\alpha(x_i),\alpha(x_j)],
    \alpha^{2}(x_{i+1}),...,\widehat{x_{j}},...,\alpha^{2}(x_{n+1})\Big)\\
    &=&\Big[\alpha^{n}(x_{1}),\varphi \circ \alpha(x_{2},...,x_{n+1})\Big]
    + \sum\limits_{i=2}^{n+1}(-1)^{i+|x_{i}|(|\varphi|+|x_{i+1}|+...+|x_{n+1}|)}
    \Big[\varphi\circ \alpha(x_{1},...,\widehat{x_{i}},...,x_{n+1}),\alpha^{n}(x_{i})\Big]\\
    &+&\sum\limits_{1\leq i < j \leq n+1}(-1)^{j+1+|x_{j}|(|x_{i+1}|+...+|x_{j-1}|)}\varphi\circ \alpha\Big(\alpha(x_{1}),...,\alpha(x_{i-1}),[x_i,x_j],
    \alpha(x_{i+1}),...,\widehat{x_{j}},...,\alpha(x_{n+1})\Big)\\
    &=&\beta \circ\Big[\alpha^{n-1}(x_{1}),\varphi(x_{2},...,x_{n+1})\Big]
    + \sum\limits_{i=2}^{n+1}(-1)^{i+|x_{i}|(|\varphi|+|x_{i+1}|+...+|x_{n+1}|)}
   \beta\circ \Big[\varphi(x_{1},...,\widehat{x_{i}},...,x_{n+1}),\alpha^{n-1}(x_{i})\Big]\\
    &+&\sum\limits_{1\leq i < j \leq n+1}(-1)^{j+1+|x_{j}|(|x_{i+1}|+...+|x_{j-1}|)}\beta\circ \varphi\Big(\alpha(x_{1}),...,\alpha(x_{i-1}),[x_i,x_j],
    \alpha(x_{i+1}),...,\widehat{x_{j}},...,\alpha(x_{n+1})\Big)\\
    &=&\beta\circ \delta^{n}(\varphi)\Big(x_{1},...,x_{n+1}\Big)
\end{eqnarray*}
which completes the proof.
\end{proof}
\begin{thm} Let $(\mathcal{A},[.,.],\alpha)$ be a Hom-Leibniz superalgebra and $(V,\beta)$ be an $\mathcal{A}$-module.
Then the pair $(\bigoplus_{n\geq 0}C_{\alpha,\beta}^{n}, \delta^{n})$ is a cohomology complex, that is  the maps $\delta^{n}$ satisfy  $\delta^{n+1}\circ \delta^{n}=0,~~\forall~~ n \geq 1.$
\end{thm}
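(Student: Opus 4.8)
The plan is to establish the cochain-complex property $\delta^{n+1}\circ\delta^n=0$ by direct verification, exactly in the spirit of the classical Loday proof for Leibniz cohomology, but carefully tracking the Hom-twists $\alpha^k$ and the Koszul signs coming from the $\mathbb{Z}_2$-grading. First I would fix $\varphi\in C^{n}_{\alpha,\beta}(\mathcal{A},V)$ and a tuple $(x_1,\dots,x_{n+2})\in\mathcal{H}(\mathcal{A}^{n+2})$, and expand $\delta^{n+1}(\delta^n(\varphi))(x_1,\dots,x_{n+2})$ fully using the defining formula \eqref{Leibnizcohomo}. The point of the preceding Lemma is precisely that it guarantees $\delta^n(\varphi)\in C^{n+1}_{\alpha,\beta}(\mathcal{A},V)$, so that the composite is well-defined and the compatibility relation $\beta\circ\varphi=\varphi\circ\alpha^{\otimes n}$ can be invoked freely to absorb and relocate powers of $\alpha$ whenever a bracket $[\cdot,\cdot]$ acts on an argument of $\delta^n(\varphi)$.

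The body of the argument is a bookkeeping classification. The expansion produces three families of terms according to which block of the outer $\delta^{n+1}$ meets which block of the inner $\delta^n$: (i) terms where two distinct pairs of arguments are bracketed (the ``double-bracket'' terms, where $[x_i,x_j]$ and $[x_k,x_l]$ arise from nested applications of the last sum in \eqref{Leibnizcohomo}); (ii) terms where one bracketing is nested inside another, i.e. a triple bracket $[[x_i,x_j],\alpha(x_k)]$ or $[\alpha(x_i),[x_j,x_k]]$; and (iii) the ``outer-action'' terms in which one of the $ad$-type bracketings $[\alpha^{n}(x_1),-]$ or $[-,\alpha^{n}(x_i)]$ from the outer $\delta^{n+1}$ is composed with another such bracketing from the inner $\delta^n$. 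The strategy is to show that family (i) cancels in pairs by the symmetry of the index roles $(i,j)\leftrightarrow(k,l)$ once the signs are matched, that family (ii) cancels using the Super-Hom-Leibniz identity \eqref{super-LeiIdent} (equivalently its $ad$-form \eqref{ad-hom}), and that family (iii) cancels against the remaining cross terms again via \eqref{super-LeiIdent} applied to $V$ through the module axioms \eqref{eg3-rep}--\eqref{eg5-rep}.

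The main obstacle, and the part demanding the most care, is the sign bookkeeping. The exponents in \eqref{Leibnizcohomo} mix the combinatorial sign $(-1)^{i}$ or $(-1)^{j+1}$ with the Koszul sign $(-1)^{|x_i|(|\varphi|+|x_{i+1}|+\cdots+|x_{n+1}|)}$, and when one index is deleted in the inner differential the residual degrees shift the exponents in the outer differential; I must verify that after a deleted slot the sign accrued by each surviving cancellation pair is exactly opposite. Concretely, for each cancelling pair I would compute the parity of the permutation needed to bring the two bracketed arguments into a common normal form and confirm it offsets the explicit $(-1)$-powers. Because $(\mathcal{A},[\cdot,\cdot],\alpha)$ is assumed multiplicative, the relations $\alpha([x,y])=[\alpha(x),\alpha(y)]$ and $\alpha^k([x,y])=[\alpha^k(x),\alpha^k(y)]$ let every mismatched power of $\alpha$ be normalized so that the twists never obstruct the cancellation; the Koszul signs are therefore the only genuine difficulty, and once they are shown to pair up the three families vanish termwise, yielding $\delta^{n+1}\circ\delta^n=0$.
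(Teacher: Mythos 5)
Your proposal is correct and follows essentially the same route as the paper: a direct expansion of $\delta^{n+1}\circ\delta^{n}(\varphi)$ followed by termwise cancellation, with your three families (i)--(iii) corresponding exactly to the paper's grouping $X_1,\dots,X_7$ (the double-bracket terms cancel in pairs as $X_5=-X_7$ and $X_2=-X_6$, the nested-bracket terms vanish as $X_4=0$ by the Super-Hom-Leibniz identity, and the action terms satisfy $X_1+X_3=0$ via the module axiom \eqref{eg3-rep} and the compatibility \eqref{alpha-beta-fi}). Your use of the preceding Lemma to ensure $\delta^n(\varphi)\in C^{n+1}_{\alpha,\beta}(\mathcal{A},V)$ and of multiplicativity to normalize powers of $\alpha$ likewise mirrors the paper's standing assumptions.
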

\begin{proof}Let $n=1$. Since
\begin{eqnarray*}
\delta^{1}(\varphi)(x_{1},x_{2})&=& [x_{1},\varphi(x_{2})]+(-1)^{|x_{2}||\varphi|}[\varphi(x_{1}),x_{2}]-\varphi([x_{1},x_{2}]).
\end{eqnarray*}
and
\begin{eqnarray*}
&& \delta^{2}(\varphi)(x_{1},x_{2},x_{3})\\
&=&[\alpha(x_{1}),\varphi(x_{2},x_{3})]+(-1)^{|x_{2}|(|\varphi|+|x_{3}|)}[\varphi(x_{1},x_{3}),\alpha(x_{2})]
-(-1)^{|x_{3}||\varphi|}[\varphi(x_{1},x_{2}),\alpha(x_{3})]\\
&-&\varphi([x_{1},x_{2}],\alpha(x_{3}))+(-1)^{|x_{2}||x_{3}|}\varphi([x_{1},x_{3}],\alpha(x_{2}))
+\varphi(\alpha(x_{1}),[x_{2},x_{3}]),
\end{eqnarray*}
then
\begin{eqnarray*}
&& \delta^{2}\circ \delta^{1} (\varphi)(x_{1},x_{2},x_{3})\\
&=&[\alpha(x_{1}),\delta^{1}(\varphi)(x_{2},x_{3})]+(-1)^{|x_{2}|(|\varphi|+|x_{3}|)}[\delta^{1}(\varphi)(x_{1},x_{3}),\alpha(x_{2})]
-(-1)^{|x_{3}||\varphi|}[\delta^{1}(\varphi)(x_{1},x_{2}),\alpha(x_{3})]\\
&-& \delta^{1}(\varphi)([x_{1},x_{2}],\alpha(x_{3}))+(-1)^{|x_{2}||x_{3}|}\delta^{1}(\varphi)([x_{1},x_{3}],\alpha(x_{2}))
+\delta^{1}(\varphi)(\alpha(x_{1}),[x_{2},x_{3}])\\
&=& [\alpha(x_{1}),[x_{2},\varphi(x_{3})]]+(-1)^{|x_{3}||\varphi|}[\alpha(x_{1}),[\varphi(x_{2}),x_{3}]]-[\alpha(x_{1}),\varphi([x_{2},x_{3}])]
+(-1)^{|x_{2}|(|\varphi|+|x_{3}|)}[[x_{1},\varphi(x_{3})],\alpha(x_{2})]\\
&+&(-1)^{|x_{2}|(|\varphi|+|x_{3}|)+|\varphi||x_{3}|}[[\varphi(x_{1}),x_{3}],\alpha(x_{2})]
-(-1)^{|x_{2}|(|\varphi|+|x_{3}|)}[\varphi([x_{1},x_{3}]),\alpha(x_{2})]-(-1)^{|x_{3}||\varphi|}[[(x_{1},\varphi(x_{2})],\alpha(x_{3})]\\
&-&(-1)^{|\varphi|(|x_{2}|+|x_{3}|)}[[\varphi(x_{1}),x_{2}],\alpha(x_{3})]
+ (-1)^{|x_{3}||\varphi|}[\varphi([x_{1},x_{2}]),\alpha(x_{3})]-[[x_{1},x_{2}],\varphi(\alpha(x_{3}))]\\
&-&(-1)^{|x_{3}||\varphi|}[\varphi([x_{1},x_{2}]),\alpha(x_{3})]
+ \varphi([[x_{1},x_{2}],\alpha(x_{3})])+(-1)^{|x_{2}||x_{3}|}[[x_{1},x_{3}],\varphi(\alpha(x_{2}))]\\
&+&(-1)^{|x_{2}||x_{3}|+|x_{2}||\varphi|}[\varphi([x_{1},x_{3}]),\alpha(x_{2})]
-(-1)^{|x_{2}||x_{3}|}\varphi([[x_{1},x_{3}],\alpha(x_{2})])+[\alpha(x_{1}),\varphi([x_{2},x_{3}])]\\
&+&(-1)^{|\varphi|(|x_{2}|+|x_{3}|)}[\varphi(\alpha(x_{1})),[x_{2},x_{3}]]
-\varphi([\alpha(x_{1}),[x_{2},x_{3}]]).
\end{eqnarray*}
Since $\varphi\circ \alpha=\beta \circ \varphi$ and
\begin{eqnarray*}
&&\bullet\quad \varphi([[x_{1},x_{2}],\alpha(x_{3})])-\varphi([\alpha(x_{1}),[x_{2},x_{3}]])+(-1)^{|x_{2}||x_{3}|}\varphi([[x_{1},x_{3}],\alpha(x_{2})])\\
&&= \varphi([[x_{1},x_{2}],\alpha(x_{3})])-[\alpha(x_{1}),[x_{2},x_{3}]])+(-1)^{|x_{2}||x_{3}|}[[x_{1},x_{3}],\alpha(x_{2})])= 0,\\
&&\bullet\quad(-1)^{|\varphi|(|x_{2}|+|x_{3}|)}[\beta(\varphi(x_{1})),[x_{2},x_{3}]]+(-1)^{|x_{2}|(|\varphi|+|x_{3}|)+|\varphi||x_{3}|}[[\varphi(x_{1}),x_{3}],\alpha(x_{2})]\\
&&-(-1)^{|\varphi|(|x_{2}|+|x_{3}|)}[[\varphi(x_{1}),x_{2}],\alpha(x_{3})] =0,\\
&&\bullet\quad(-1)^{|x_{2}||x_{3}|}[[x_{1},x_{3}],\beta(\varphi(x_{2}))]+(-1)^{|x_{3}||\varphi|}[\alpha(x_{1}),[\varphi(x_{2}),x_{3}]]
-(-1)^{|x_{3}||\varphi|}[[(x_{1},\varphi(x_{2})],\alpha(x_{3})]=0,\\
&&\bullet\quad [\alpha(x_{1}),[x_{2},\varphi(x_{3})]]+(-1)^{|x_{2}|(|\varphi|+|x_{3}|)}[[x_{1},\varphi(x_{3})],\alpha(x_{2})]-[[x_{1},x_{2}],\beta(\varphi(x_{3}))]=0,
\end{eqnarray*}
we have $\delta^{2}\circ \delta^{1} (\varphi)(x_{1},x_{2},x_{3})=0$. Our conclusion holds.\\
In general, to prove $\delta^{n+1}\circ \delta^{n}=0$ for $n\geq 2$, we proceed by induction. Suppose
$$\delta^{n+1}\circ \delta^{n}(x_{1},...,x_{n+2})=X_1+X_2+X_3+X_4+X_5+X_6+X_7,$$
where
\begin{eqnarray*}
  \star)\ X_1&=& [\alpha^{n}(x_{1}),[\alpha^{n-1}(x_{2}),\varphi(x_{3},...,x_{n+2})]] \\
   &+&\sum \limits _{3\leq i\leq n+2}(-1)^{i+1+|x_i|(|\varphi|+|x_{i+1}|+...+|x_{n+2}|)} [\alpha^{n}(x_{1}),[\varphi(x_{2},...,\widehat{x_i},...,x_{n+2}),\alpha^{n-1}(x_{i})]]\\
   &+&\sum \limits _{2\leq i\leq n+2}(-1)^{i+|x_j|(|x_{i+1}|+...+|x_{j-1}|)} [\alpha^{n-1}(x_{i}),[\varphi(x_{1},...,\widehat{x_i},...,x_{n+2}),\alpha^{n}(x_{i})]]\\
   &+&\sum \limits _{3\leq i <j\leq n+2} (-1)^{i+j+|x_i|(|\varphi|+|x_{i+1}|+...+|\widehat{x_j}|+...+|x_{n+2}|)}
   [[\varphi(x_{1},...,\widehat{x_i},...,\widehat{x_j},...,x_{n+2}),\alpha^{n-1}(x_{i})],\alpha^{n}(x_{j})]\\
   &-&\sum \limits _{3\leq i <j\leq n+2}(-1)^{i+j+|x_j|(|\varphi|+|x_{j+1}|+...+|x_{n+2}|)} [[\varphi(x_{1},...,\widehat{x_i},...,\widehat{x_j},...,x_{n+2}),\alpha^{n-1}(x_{j})],\alpha^{n}(x_{i})],
\\& & \\ \star)\
   X_2&=& \sum \limits _{2\leq i<j\leq n+2}(-1)^{|x_j|(|x_{i+1}|+...+|x_{j-1}|)} [\alpha^{n}(x_{1}),\varphi(\alpha(x_{2}),...,[x_i,x_j],...,\widehat{x_j},...,\alpha(x_{n+2})),\alpha^{n-1}(x_{i})]\\
   &+&\sum \limits _{2\leq i<j<k\leq n+2} (-1)^{|x_k|(|x_{k+1}|+...+|x_{n+2}|)} [\varphi(\alpha(x_{1}),...,[x_i,x_j],...,\widehat{x_j},...,\widehat{x_k},...,\alpha(x_{n+2})),\alpha^{n}(x_{k})]\\
   &+&\sum \limits _{2\leq i<j<k\leq n+2} (-1)^{|x_k|(|\varphi|+|x_{k+1}|+...+|x_{n+2}|)} [\varphi(\alpha(x_{1}),...,\widehat{x_i},...,[x_j,x_k],...,\widehat{x_k},...,\alpha(x_{n+2})),\alpha^{n}(x_{i})]\\
   &+&\sum \limits _{2\leq i<j<k\leq n+2}(-1)^{|x_j|(|\varphi|+|x_{j+1}|+...+|\widehat{x_k}|+...+|x_{n+2}|)}[\varphi(\alpha(x_{1}),...,[x_i,x_k],...
   ,\widehat{x_j},...,\widehat{x_k},...,\alpha(x_{n+2})),\alpha^{n}(x_{j})],
\\& & \\ \star)\
   X_3&=& \sum \limits _{2\leq i\leq n+2}(-1)^{i+1+|x_i|(|\varphi|+|x_{2}|+...+|x_{i-1}|)} [[\alpha^{n-1}(x_{1}),\alpha^{n-1}(x_{i})],\varphi(\alpha(x_{2}),...,\widehat{x_i},...,\widehat{x_j},...,\alpha(x_{n+2}))]\\
   &+&\sum \limits _{2\leq i<j\leq n+2}(-1)^{i+j+1+(|x_i|+|x_j|)(|\varphi|+|x_{i+1}|+...+|\widehat{x_j}|+...+|x_{n+2}|)} \\ &&\quad\quad[\varphi(\alpha(x_{1}),...,\widehat{x_i}...,\widehat{x_j},...,\alpha(x_{n+2})),[\alpha^{n-1}(x_{i}),\alpha^{n-1}(x_{j})]],
\end{eqnarray*}
\begin{eqnarray*}
  \star)\ X_4&=& \sum \limits _{1\leq i<j<k\leq n+2}(-1)^{j+k+1+(|x_j|+|x_{k}|)(|x_{k-1}|+...+|\widehat{x_j}|+...+|x_{i+1}|)} \\ &&\varphi(\alpha^{2}(x_{2}),...,[[x_i,x_j],\alpha(x_k)],...,\widehat{x_j},...,\widehat{x_k},...,\alpha^{2}(x_{n+2}))]\\
   &-&\sum \limits _{1\leq i<j<k\leq n+2}(-1)^{j+k+1+(|x_j|+|x_{k}|)(|x_{k-1}|+...+|\widehat{x_j}|+...+|x_{i+1}|)} \\ &&\varphi(\alpha^{2}(x_{1}),...,[\alpha(x_i),[x_j,x_k]],...,\widehat{x_j}...,\widehat{x_k},...,\alpha^{2}(x_{n+2}))\\
   &-&\sum \limits _{1\leq i<j<k\leq n+2}(-1)^{j+k+1+(|x_j|+|x_{k}|)(|x_{k-1}|+...+|\widehat{x_j}|+...+|x_{i+1}|)} \\ &&\varphi(\alpha^{2}(x_{1}),...,[[x_i,x_k],\alpha(x_j)]],...,\widehat{x_j}...,\widehat{x_k},...,\alpha^{2}(x_{n+2})),
\\& & \\ \star)\
X_5&=& \sum \limits _{1\leq i<j<k\leq n+2}(-1)^{|x_{k}|(|x_{j+1}|+...+|x_{k-1}|)+|x_{i}|(|x_{2}|+...+|x_{i-1}|)} \\
&& \varphi([\alpha(x_{1}),\alpha(x_i)],\alpha^{2}(x_{2}),...,\widehat{\alpha(x_i)},...,[\alpha(x_j),\alpha(x_k)],...,\widehat{\alpha(x_k)}
,...,\alpha^{2}(x_{n+2}))\\
&+&\sum \limits _{1\leq i<j<k\leq n+2} (-1)^{|x_{k}|(|x_{k-1}|+...+|\widehat{x_j}|+...+|x_{i+1}|)+|x_{j}|(|x_{2}|+...+|[x_i,x_k]|+...+|x_{j-1}|)} \\ &&\varphi([\alpha(x_{1}),\alpha(x_j)],\alpha^{2}(x_{2}),...,[\alpha(x_i),\alpha(x_k)],...,\widehat{\alpha(x_j)}...,\widehat{\alpha(x_k)},...,\alpha^{2}(x_{n+2}))\\
&+&\sum \limits _{1\leq i<j<k\leq n+2} (-1)^{|x_{j}|(|x_{i+1}|+...+|x_{j-1}|)+|x_{k}|(|x_{2}|+...+|[x_i,x_j]|+...+|x_{k-1}|)} \\ &&\varphi([\alpha(x_{1}),\alpha(x_k)],\alpha^{2}(x_{2}),...,[\alpha(x_i),\alpha(x_j)],...,\widehat{\alpha(x_j)}...,\widehat{\alpha(x_k)},...,\alpha^{2}(x_{n+2}))\\ &+&\sum \limits _{1\leq i<j<k<l\leq n+2}(-1)^{|x_{l}|(|x_{j+1}|+...+|x_{l-1}|)} \varphi(\alpha^{2}(x_{1}),...,[x_i,x_k],...,[x_j,x_l],...,\widehat{x_l}...,\alpha^{2}(x_{n+2}))\\
&+&\sum \limits _{1\leq i<j<k<l\leq n+2}(-1)^{|x_{l}|(|x_{l-1}|+...+|\widehat{x_k}|+...+|x_{j+1}|)+|x_{k}|(|x_{k-1}|+...+|[x_j,x_l]|+...+|x_{i+1}|)}\\ &&\varphi(\alpha^{2}(x_{1}),...,[x_i,x_k],...,[x_j,x_l],...,\widehat{x_k},...,\widehat{x_l}...,\alpha^{2}(x_{n+2}))\\
&+&\sum \limits _{1\leq i<j<k<l\leq n+2}(-1)^{|x_{l}|(|x_{l-1}|+...+|\widehat{x_k}|+...+|x_{i+1}|)+|x_{k}|(|x_{k-1}|+...+|x_{j+1}|)}\\
&&\varphi(\alpha^{2}(x_{1}),...,[x_i,x_l],...,[x_j,x_k],...,\widehat{x_k},...,\widehat{x_l}...,\alpha^{2}(x_{n+2})).
\end{eqnarray*}
In addition, $X_6=-X_2,~~X_7=-X_5$. By (\ref{eg3-rep}) and (\ref{alpha-beta-fi}), we can easily check that $X_1+X_3=0$ and $X_4=0$. Thus
$$ \delta^{n+1}\circ \delta^{n}(x_{1},...,x_{n+2})=0,$$
that is, $\delta^{n+1}\circ \delta^{n}=0$ for $n\geq 2$. This completes the proof of this Theorem.
\end{proof}

\begin{definition}
Let $(\mathcal{A},[.,.],\alpha)$ be a Hom-Leibniz superalgebra and $(V,\beta)$ be an $\mathcal{A}$-module.
Then we define
\begin{enumerate}
  \item The $p$-cocycle space
  $$Z_{\alpha,\beta}^{p}(\mathcal{A},V)=Ker\ d^p=\{\varphi \in C_{\alpha,\beta}^{p}(\mathcal{A},V)/d^p\varphi=0\}$$
  \item The $p$-cobord space
  $$B_{\alpha,\beta}^{p}(\mathcal{A},V)=Im\ d^{p-1}=\{\varphi \in C_{\alpha,\beta}^{p}(\mathcal{A},V)/\exists \psi \in C_{\alpha,\beta}^{p-1}(\mathcal{A},V),\ \varphi=d^{p-1}\psi\}$$
\end{enumerate}
\end{definition}
\begin{remark} \
\begin{enumerate}
  \item The $p$-cocycle $Z_{\alpha,\beta}^{p}(\mathcal{A},V)$ space is $\mathbb{Z}_2$-graded. The even (resp. odd) $p$-cocycles
space is defined as $Z_{\alpha,\beta,0}^{p}(\mathcal{A},V)=Z_{\alpha,\beta}^{p}(\mathcal{A},V)\cap C_{\alpha,\beta,0}^{p}(\mathcal{A},V)$
 (resp. $Z_{\alpha,\beta,1}^{p}(\mathcal{A},V)=Z_{\alpha,\beta}^{p}(\mathcal{A},V)\cap C_{\alpha,\beta,1}^{p}(\mathcal{A},V))$.
  \item The $p$-cobord $B_{\alpha,\beta}^{p}(\mathcal{A},V)$ space is $\mathbb{Z}_2$-graded. The even (resp. odd) $p$-cobords
space is defined as $B_{\alpha,\beta,0}^{p}(\mathcal{A},V)=B_{\alpha,\beta}^{p}(\mathcal{A},V)\cap C_{\alpha,\beta,0}^{p}(\mathcal{A},V)$
 (resp. $B_{\alpha,\beta,1}^{p}(\mathcal{A},V)=Z_{\alpha,\beta}^{p}(\mathcal{A},V)\cap B_{\alpha,\beta,1}^{p}(\mathcal{A},V))$.
\end{enumerate}
\end{remark}
\begin{lem}$Z_{\alpha,\beta}^{p}(\mathcal{A},V)\subset B_{\alpha,\beta}^{p}(\mathcal{A},V)$
\end{lem}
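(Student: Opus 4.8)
The statement to be established is the inclusion $Z_{\alpha,\beta}^{p}(\mathcal{A},V)\subset B_{\alpha,\beta}^{p}(\mathcal{A},V)$, that is, every $p$-cocycle is a $p$-coboundary. The direct plan is to fix an arbitrary $\varphi\in Z_{\alpha,\beta}^{p}(\mathcal{A},V)$, so that $\delta^{p}\varphi=0$ and $\beta\circ\varphi=\varphi\circ\alpha^{\otimes p}$ by \eqref{alpha-beta-fi}, and to produce some $\psi\in C_{\alpha,\beta}^{p-1}(\mathcal{A},V)$ with $\varphi=\delta^{p-1}\psi$. The standard device is a contracting homotopy, i.e. a family of maps $h^{p}\colon C^{p}(\mathcal{A},V)\to C^{p-1}(\mathcal{A},V)$ satisfying $\delta^{p-1}\circ h^{p}+h^{p+1}\circ\delta^{p}=\mathrm{id}$. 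Granting such an $h$, one would set $\psi=h^{p}\varphi$ and compute $\delta^{p-1}\psi=\delta^{p-1}h^{p}\varphi=(\mathrm{id}-h^{p+1}\delta^{p})\varphi=\varphi$, using $\delta^{p}\varphi=0$; one would then verify that $h^{p}$ preserves the compatibility condition \eqref{alpha-beta-fi}, so that $\psi$ genuinely lies in the twisted cochain space $C_{\alpha,\beta}^{p-1}(\mathcal{A},V)$.

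To construct $h^{p}$ one would try to exploit the explicit shape of $\delta^{n}$ in \eqref{Leibnizcohomo}, imitating the Loday--Koszul contraction of the Leibniz (bar-type) complex: the leading term $[\alpha^{n-1}(x_{1}),\varphi(x_{2},\dots,x_{n+1})]$ plays the role of an external multiplication, and one would hope to split $\delta$ into a \emph{face} part and a \emph{degeneracy} part whose anticommutator collapses to the identity, thereby yielding the homotopy relation above.

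The step I expect to be decisive, however, is precisely this construction, and it is here that the argument must break down: no such homotopy can exist in general. Indeed, the preceding Theorem gives $\delta^{p}\circ\delta^{p-1}=0$, hence the inclusion $B_{\alpha,\beta}^{p}(\mathcal{A},V)\subset Z_{\alpha,\beta}^{p}(\mathcal{A},V)$; combining this with the inclusion asserted in the lemma would force $Z_{\alpha,\beta}^{p}=B_{\alpha,\beta}^{p}$, and therefore $H_{\alpha,\beta}^{p}(\mathcal{A},V)=Z_{\alpha,\beta}^{p}/B_{\alpha,\beta}^{p}=0$ for every $p$ and every module $V$. This directly contradicts the nontrivial computation of $H_{0}^{2}(\mathfrak{A}_{q},\mathbb{C})$ carried out in the next subsection, whose very purpose is to exhibit nonzero cohomology classes. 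Consequently the contracting homotopy sought above is unavailable and the inclusion $Z_{\alpha,\beta}^{p}\subset B_{\alpha,\beta}^{p}$ is false as typeset. The provable (and evidently intended) statement is the opposite inclusion $B_{\alpha,\beta}^{p}(\mathcal{A},V)\subset Z_{\alpha,\beta}^{p}(\mathcal{A},V)$, which is an immediate corollary of the preceding Theorem: if $\varphi=\delta^{p-1}\psi$ with $\psi\in C_{\alpha,\beta}^{p-1}$, then $\delta^{p}\varphi=\delta^{p}\delta^{p-1}\psi=0$, so $\varphi\in Z_{\alpha,\beta}^{p}$. I would therefore record the lemma with the symbols $Z$ and $B$ interchanged, which is exactly what is needed for the cohomology $H_{\alpha,\beta}^{p}=Z_{\alpha,\beta}^{p}/B_{\alpha,\beta}^{p}$ to be well defined.
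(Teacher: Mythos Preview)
Your analysis is correct: the lemma as typeset is a misprint, and the paper offers no proof of it (nor could it, for the reasons you give). The intended statement is the reverse inclusion $B_{\alpha,\beta}^{p}(\mathcal{A},V)\subset Z_{\alpha,\beta}^{p}(\mathcal{A},V)$, which is needed for the quotient $H^{p}=Z^{p}/B^{p}$ defined immediately afterwards to make sense, and which follows at once from $\delta^{p}\circ\delta^{p-1}=0$ in the preceding Theorem. Supporting evidence for the typo is the parallel lemma later in the paper (in the $n$-ary setting), where the inclusion is stated in the correct direction $\mathcal{B}^{p}_{\alpha,\nu}(N,V)\subset\mathcal{Z}^{p}_{\alpha,\nu}(N,V)$.
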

\begin{definition}
The $p^{th}$ cohomology space is the quotient $H^p(\mathcal{A},V)=\frac{Z_{\alpha,\beta}^{p}(\mathcal{A},V)}{ B_{\alpha,\beta}^{p}(\mathcal{A},V)}$. It decomposes as
well as even and odd $p^{th}$ cohomology spaces. We denote $H^p(\mathcal{A},V)=H^p_0(\mathcal{A},V)\bigoplus H^p_1(\mathcal{A},V)$.
\end{definition}
\subsection{Second cohomology group of $q$-deformed Heisenberg-Virasoro superalgebra of Hom-type $H^2_0(\mathfrak{A}_q,\mathbb{C})$}
 We denote by $[\varphi]$ the cohomology
class of an element $\varphi$.
\begin{thm}
$H^2_0(\mathfrak{A}_q,\mathbb{C})=\mathbb{C}[\phi]\oplus\mathbb{C}[\varphi]\oplus\mathbb{C}[\psi] $, where
\begin{align}
&\phi(L_n,L_m)=\delta_{n+m,0}\frac{q^{-n}}{6(1+q^n)}\{n+1\}\{n\}\{n-1\},\\
& \phi(L_n,I_m)=\phi(I_n,I_m)=\phi(G_n,G_m)=\phi(G_n,T_m)=\phi(T_n,T_m)=0.\nonumber\\
&\varphi(L_n,I_m)=\delta_{n+m,0}\frac{2q^{-n}}{(1+q^n)}\{n+1\}\{n\},\\
& \varphi(L_n,L_m)=\varphi(I_n,I_m)=\varphi(G_n,G_m)=\varphi(G_n,T_m)=\varphi(T_n,T_m)=0.\nonumber\\
&\psi(I_n,I_m)=\delta_{n+m,0}\frac{2q^{m}}{(1+q^m)}\{n\},\\
& \psi(L_n,L_m)=\psi(I_n,I_m)=\psi(G_n,G_m)=\psi(G_n,T_m)=\psi(T_n,T_m)=0\nonumber.
\end{align}
\end{thm}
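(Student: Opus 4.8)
The plan is to compute the even cocycle space $Z^{2}_{\alpha,\beta,0}(\mathfrak{A}_q,\mathbb{C})$ and the even coboundary space $B^{2}_{\alpha,\beta,0}(\mathfrak{A}_q,\mathbb{C})$ for the one–dimensional trivial module, and to read off a basis of the quotient. First I specialize the differential of the previous theorem to $V=\mathbb{C}$ concentrated in degree $0$ with trivial action $[\mathfrak{A}_q,\mathbb{C}]=[\mathbb{C},\mathfrak{A}_q]=0$; every bracket-valued term then vanishes and, since an even cochain has $|\varphi|=0$, one is left with
\[\delta^{1}(f)(x,y)=-f([x,y]_q),\qquad \delta^{2}(\varphi)(x,y,z)=-\varphi([x,y]_q,\alpha_q(z))+(-1)^{|y||z|}\varphi([x,z]_q,\alpha_q(y))+\varphi(\alpha_q(x),[y,z]_q).\]
Evenness forces $\varphi$ to vanish on all mixed-parity pairs, so $\varphi$ is completely described by its restrictions to the four even blocks $(L,L),(L,I),(I,L),(I,I)$ and the four odd blocks $(G,G),(G,T),(T,G),(T,T)$. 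I treat $\delta^{2}\varphi=0$ block by block, evaluating it on triples of basis vectors using \eqref{crochet1}--\eqref{crochet6} and $\alpha_q$.

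The computational engine is the pair of $q$-number identities
\[\{a\}(1+q^{c})+\{c\}(1+q^{a})=2\{a+c\},\qquad \{a+1\}(1+q^{c+1})+\{c+1\}(1+q^{a+1})=2\{a+c+2\},\]
both immediate from $\{n\}=\tfrac{1-q^{n}}{1-q}$. Evaluating $\delta^{2}\varphi=0$ on a triple containing one entry equal to $L_0$ makes $\{0\}=0$ and collapses the identity, on each block, to a relation of the form $2\{a+c\}\,\varphi(\cdot)=(\text{boundary value})$ on the even blocks and $2\{a+c+2\}\,\varphi(\cdot)=(\text{boundary value})$ on the blocks built on $G,T$. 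Since $q$ is not a root of unity, $\{n\}=0$ iff $n=0$, so off the anti-diagonal the value of $\varphi$ is pinned to that boundary value. For $(I,I)$ and for every odd block the boundary value is absent, so $\varphi$ is genuinely supported on $n+m=0$, respectively $n+m=-2$; for $(L,L)$ and $(L,I)$ the boundary value is exactly $\delta^{1}f$ for a suitable $1$-cochain, so the off-anti-diagonal part is a coboundary and may be gauged away. The block $(T,T)$ dies immediately, since the triple $(I_a,G_b,T_c)$ with $a\neq0$ gives $\{a\}(1+q^{c+1})\varphi(T_{a+b},T_c)=0$.

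On each surviving anti-diagonal the cocycle identity becomes a one-variable linear $q$-recursion for the functions $n\mapsto\varphi(L_n,L_{-n})$, $n\mapsto\varphi(L_n,I_{-n})$, $n\mapsto\varphi(I_n,I_{-n})$, the cross blocks being tied to these by further instances of $\delta^{2}\varphi=0$; telescoping these recursions with $\{n+m\}=\{n\}+q^{n}\{m\}$ produces the explicit solutions displayed in the statement. It then remains to divide by coboundaries. From $\delta^{1}(f)(L_n,L_m)=-(\{n\}-\{m\})f(L_{n+m})$, $\delta^{1}(f)(L_n,I_m)=\{m\}f(I_{n+m})$, $\delta^{1}(f)(I_n,L_m)=-\{n\}f(I_{n+m})$ and $\delta^{1}(f)(I_n,I_m)=0$ one sees that $f(L_\bullet)$ acts only on the $(L,L)$ block and $f(I_\bullet)$ only on the $(L,I),(I,L)$ blocks, that no coboundary ever touches $(I,I)$ or the odd blocks, and that modulo $B^{2}_{\alpha,\beta,0}$ each of the three surviving families is one-dimensional with representatives $\phi$ (the $(L,L)$ block), $\varphi$ (the $(L,I)$ block) and $\psi$ (the $(I,I)$ block). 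This yields the asserted decomposition.

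The main obstacle is the middle step, and specifically the odd sector. Because $[G,G]_q=[G,T]_q=[T,T]_q=0$ there are no coboundaries on the odd blocks, so any nonzero odd cocycle would survive into $H^{2}_0$; establishing three-dimensionality therefore hinges on showing that the coupled $q$-recursions governing $(G,G)$, $(G,T)$ and $(T,G)$ admit only the trivial solution. This forces careful tracking of the Koszul signs and of the shift in $\alpha_q(G_n)=(1+q^{n+1})G_n$, $\alpha_q(T_n)=(1+q^{n+1})T_n$, and is where essentially all of the work lies; once the supports are pinned down, the even-block recursions are the familiar $q$-deformed Virasoro and Heisenberg computations and are comparatively routine.
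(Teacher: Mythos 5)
Your plan is correct and follows essentially the same route as the paper's proof: specialize the Hom-Leibniz differential to the trivial module, evaluate the resulting $2$-cocycle identity \eqref{2cocycle} on basis triples to pin the support of each block to the anti-diagonal and to kill the odd blocks $(G,G)$, $(G,T)$, $(T,T)$, and then gauge away the $(L,L)$ and $(L,I)$ tails by an explicit $1$-cochain $f$ with $f(L_s)\propto \varphi(L_0,L_s)$ --- which is exactly the paper's strategy via the triples $(L_n,L_m,L_p)$, $(L_n,L_m,I_p)$, $(L_n,I_m,I_p)$, $(L_n,G_m,G_p)$, $(L_n,T_m,T_p)$, $(L_n,G_m,T_p)$ and its map $f$. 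Your systematic use of $L_0$-triples together with the identity $\{a\}(1+q^c)+\{c\}(1+q^a)=2\{a+c\}$, and of the triple $(I_a,G_b,T_c)$ to annihilate the $(T,T)$ block in one stroke, is only a mild streamlining of the paper's computation, not a different method.
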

\begin{proof}
For all $\varphi\in C^2_{\alpha,Id_{\mathbb{C}}}(\mathfrak{A}_q,\mathbb{C})$, we have
\begin{equation}\label{2cobord}d^2(\varphi)(x_0, x_1, x_2) = -\varphi([x_0, x_1], \alpha(x_2)) + (-1)^{|x_2||x_1|}
\varphi([x_0, x_2], \alpha(x_1)) + \varphi(\alpha(x_0), [x_1, x_2]).\end{equation}
Now, suppose that $\varphi$ is a $q$-deformed $2$-cocycle on $\mathfrak{A}_q$. From \eqref{2cobord}, we obtain
\begin{equation}\label{2cocycle} -\varphi([x_0, x_1], \alpha(x_2)) + (-1)^{|x_2||x_1|}
\varphi([x_0, x_2], \alpha(x_1)) + \varphi(\alpha(x_0), [x_1, x_2])=0.\end{equation}
In \cite{AmmarMakhloufSaadaoui2, Yongsheng CHENGHengyun YANG}
By \eqref{2cocycle} and taking the triple $(x, y, z)$ to be $(L_n,L_m,L_p)$, $(L_n,L_m,I_p)$, $(L_n,I_m,I_p)$, , and $(L_n,G_m,G_p)$, respectively, we obtain $\varphi(L_n,L_p)$, $\varphi(L_n,I_p)$, $\varphi(I_n,I_p)$,  and $\varphi(G_n,G_p)$ which define $\varphi$. Then we have
\begin{align*}
&\varphi(I_n,I_p)=\delta_{n+p,0}\frac{(q+1)\{n+1\}}{q^n+1}\varphi(I_1,I_{-1})\\
&\varphi(L_n,I_p)=\delta_{n+p,0}\big(-\frac{q^{-n}(q+1)(q^n-1)(q^n-q^2)}{(q-1)^2(q^n+1)}\varphi(L_1,I_{-1})
\\&+\frac{q^{1-n}(q^2+1)(q^n-1)(q^n-q)}{(q-1)^2(q+1)(q^n+1)}\varphi(L_1,I_{-1})\big)\\
&\varphi(L_n,L_p)=\delta_{n+p,0}\big(-\frac{A}{B}\varphi(L_1,L_{-1})
+\frac{A'}{B'}\varphi(L_1,L_{-1})\big)\\
&\varphi(G_n,G_p)=0.
\end{align*}
Now taking  the triple $(x, y, z)$ to be  $(L_n,T_m,T_p)$ and $(L_n,T_m,T_p)$ in \eqref{2cocycle} to obtain $\varphi(T_n,T_p)$,  and $\varphi(G_n,T_p)$.\\
$\bullet$ For $(x, y, z)=(L_n,G_m,T_p)$
\begin{equation}\label{2cocycle1} -\varphi([L_m, T_n], \alpha(T_p)) -
\varphi([L_m, T_p], \alpha(T_n)) + \varphi(\alpha(L_m), [T_n, T_p])=0.\end{equation}
Thus
\begin{equation}\label{2cocycle2} -(1+q^{p+1})\{n+1\}\varphi( T_{m+n}, T_p) -
(1+q^{n+1})\{p+1\}\varphi(T_{m+p}, T_n) =0.\end{equation}
Taking $m=0$
$$
 2\frac{q^{p+1}-q^{n+1}}{1-q}\varphi(T_{p}, T_n) =0.$$
 Thus $\varphi(T_n,T_p)=0$ for all $n\neq p.$\\
 Taking $n=0$ and $m=p$ in \eqref{2cocycle2}
$$(1+q^{m+1})\varphi( T_{m}, T_m)= -
(1+q)\{m+1\}\varphi(T_{2m}, T_0) .$$
which implies that  $\varphi(T_m,T_{m})=0$ for all $m\neq0 $.\\
Taking $n=0$ and $m=-p$ in \eqref{2cocycle2}, we have $\varphi(T_0,T_0)=0$. Thus $\varphi(T_n,T_{m})=0$ for all $n,\ m\in\mathbb{Z}.$\\
$\bullet$ For $(x, y, z)=(L_n,G_m,T_p)$
 \begin{equation}\label{2cocycle3} -\varphi([L_m, G_n], \alpha(T_p)) -
\varphi([L_m, T_p], \alpha(G_n)) + \varphi(\alpha(L_m), [G_n, T_p])=0.\end{equation}
Thus
\begin{equation}\label{2cocycle4} (1+q^{p+1})(\{m\}-\{n+1\})\varphi( G_{m+n}, T_p) -
(1+q^{n+1})\{p+1\}\varphi(T_{m+p}, G_n) =0.\end{equation}
Similarly we can prove that $\varphi(G_n,T_{m})=0$ for all $n,\ m\in\mathbb{Z}.$\\
 We denote by $f$ the even linear map defined on $\mathfrak{A}$ by
 \begin{align*}
 &f(L_n)=-\frac{1}{\{n\}}\varphi(L_0,L_n),\ \textrm{if}\ n\neq0,\ f(L_0)=-\frac{q}{1+q}\varphi(L_1,L_{-1}),\\
 &f(I_n)=-\frac{1}{\{n\}}\varphi(L_0,I_n),\ \textrm{if}\ n\neq0,\ f(I_0)=-q\varphi(L_1,I_{-1}),\\
 &f(G_n)=-\frac{1}{\{n+1\}}\varphi(L_0,G_n),\ \textrm{if}\ n\neq-1,\ f(G_{-1})=-\frac{q}{1+q}\varphi(L_1,G_{-2}),\\
 &f(T_n)=-\frac{1}{\{n+1\}}\varphi(L_0,I_n),\ \textrm{if}\ n\neq-1,\ f(T_{-1})=-q\varphi(L_1,T_{-2}).
 \end{align*}
 It is easy to verify that
  \begin{align*}
 &\delta^1(f)(L_n,L_m)=\frac{\{n\}-\{m\}}{\{n+m\}}\varphi(L_0,L_{n+m}),\ \textrm{if}\ n+m\neq0,\ \delta^1(f)(L_n,L_{-n})=0,\\
 &\delta^1(f)(L_n,I_m)=\frac{\{n\}}{\{n+m\}}\varphi(L_0,L_{n+m}),\ \textrm{if}\ n+m\neq0,\ \delta^1(f)(L_n,I_{-n})=0,\\
 &\delta^1(f)(I_n,I_m)=\delta^1(f)(G_n,G_m)=\delta^1(f)(G_n,T_m)=\delta^1(f)(T_n,T_m)=0.
 \end{align*}
\end{proof}

\section{Derivations of the Hom-Lie superalgebra $\mathfrak{A}_q$}
In this section we compute the derivations of $q$-deformed Heisenberg-Virasoro superalgebra of Hom-type $\mathfrak{A}_q$ and its $q$-derivation. A homogeneous $\alpha^{k}$-derivation is said of degree $s$ if there exists $s\in \mathbb{Z}$ such that for all $n\in \mathbb{Z}$ we have
$D(< L_{n} >) \subset < L_{n+s} >$. The corresponding subspace of homogeneous $\alpha^{k}$-derivations of degree $s$
is denoted by $Der^s_{\alpha^k,i}(\mathfrak{A}_q)(i\in\mathbb{Z}_{2})$.\\
It easy to check that
$Der^{s}_{\alpha^{k}}(\mathfrak{A}_q)=\oplus_{s\in \mathbb{Z}}\Big(Der^{s}_{\alpha^{k},0}(\mathfrak{A}_q)\oplus Der^{s}_{\alpha^{k},1}(\mathfrak{A}_q)\Big)$.\\
Let $D$ be a homogeneous $\alpha^{k}$-derivation
$$D([x, y]) = [D(x),\alpha^{k}(y)] + (-1)^{|x||D|}[\alpha^{k}(x),D(y)],\forall~~ x, y\in \mathcal{H}(\mathfrak{A}_q).$$
We deduce that
\begin{equation}\label{first-ega}
(\{m\}-\{n\})D(L_{n+m}) = (1 + q^{m})^{k}[D(L_n),L_m]_{q} + (1 + q^{n})^{k}[L_n,D(L_m)]_{q},
\end{equation}
\begin{equation}\label{second-ega}
   (\{m + 1\}-\{n\})D(I_{n+m}) = (1 + q^{m})^{k}[D(L_n),G_m]_{q} + (1 + q^{n})^{k}[L_n,D(L_m)]_{q},
\end{equation}
\begin{equation}\label{third-egali}
   (\{m + 1\} -\{n\})D(G_{n+m}) = (1 + q^{m})^{k}[D(L_n),G_m]_{q} + (1 + q^{n})^{k}[L_n,D(L_m)]_{q},
\end{equation}
and
\begin{equation}\label{for-egali}
   (\{m + 1\} -\{n\})D(T_{n+m}) = (1 + q^{m})^{k}[D(L_n),G_m]_{q} + (1 + q^{n})^{k}[L_n,D(L_m)]_{q},~~\forall~~n,m\in\mathbb{Z}.
\end{equation}
\subsection{The $\alpha^{0}$-derivation of the Hom-Lie superalgebra $\mathfrak{A}_q$}
\begin{proposition} The set of even  $\alpha^{0}$-derivations of the Hom-Lie superalgebra $\mathfrak{A}_q$ is
$$Der_{\alpha^{0},0}(\mathfrak{A}_q) =< D_1 > \oplus < D_2 >\oplus < D_3 >\oplus < D_4 >$$
where $D_1$, $D_2$, $D_3$ and $D_4$ are defined, with respect to the basis as
\begin{align}&D_{1}(L_n) = nL_n,\ D_{1}(I_n) = -q\{n-1\}L_n+nI_n ,D_{1}(G_n) = 0,\ D_{1}(T_n) =nT_n,\\
&D_2(I_n) =I_n ,\ D_2(L_n) =D_2(G_n) =D_2(T_n) =0,\\
&D_{3}(G_n) = G_n,\ D_{3}(L_n) =D_{3}(L_n) = D_{3}(T_n) =0,\\
&D_4(T_n) =T_n ,\ D_4(L_n) =D_4(G_n) =D_4(I_n) =0.
\end{align}
\end{proposition}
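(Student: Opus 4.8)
The plan is to exploit the internal $\mathbb{Z}$-grading of $\mathfrak{A}_q$ given by the index of the generators $L_n, I_n, G_n, T_n$, under which every structure bracket \eqref{crochet1}--\eqref{crochet6} is homogeneous. Since $\alpha^0=\mathrm{Id}$, an even $\alpha^0$-derivation is just an even superderivation $D$ with $D([x,y]_q)=[D(x),y]_q+[x,D(y)]_q$, and being even it preserves the even subspace $\langle L_\bullet, I_\bullet\rangle$ and the odd subspace $\langle G_\bullet, T_\bullet\rangle$. Writing $D=\sum_{s\in\mathbb{Z}}D_s$, where $D_s$ raises the index by $s$, the homogeneity of the bracket forces each $D_s$ to be again an $\alpha^0$-derivation. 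Hence it suffices to classify the homogeneous derivations of each degree $s$ and then collect those that survive.

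For a fixed $s$ I would set
\begin{align*}
D(L_n) &= a_n L_{n+s}+b_n I_{n+s}, & D(I_n) &= c_n L_{n+s}+d_n I_{n+s},\\
D(G_n) &= e_n G_{n+s}+f_n T_{n+s}, & D(T_n) &= g_n G_{n+s}+h_n T_{n+s},
\end{align*}
and substitute these into the derivation identity applied successively to $[L_m,L_n]_q$, $[L_m,I_n]_q$, $[L_m,G_n]_q$, $[I_m,G_n]_q$ and $[L_m,T_n]_q$, i.e.\ to the constraints of the form \eqref{first-ega}--\eqref{for-egali} specialized to $k=0$ (so every factor $(1+q^\bullet)^k$ becomes $1$). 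Comparing the coefficients of the basis vectors $L_{m+n+s}$, $I_{m+n+s}$, $G_{m+n+s}$, $T_{m+n+s}$ produces a coupled system of two-variable $q$-difference equations for the eight sequences $a_n,\dots,h_n$, while the vanishing brackets in \eqref{crochet6} furnish the extra relations needed to eliminate several of them.

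To solve the system I would use $\{0\}=0$ together with the addition rules $\{n+1\}=1+q\{n\}$ and $\{n+m\}=\{n\}+q^n\{m\}$, and make judicious specializations (typically $n=0$, or $m+n=0$, or $m=1,\ n=-1$) to collapse each two-variable relation into a one-variable recurrence. For instance, setting $n=0$ in the $L$--$L$ relation and using $\{m+s\}-\{m\}=q^m\{s\}$ expresses $a_m$ through $a_0$, and reinserting this into the full relation forces $a_n$ to be linear in $n$ when $s=0$ and to vanish when $s\neq 0$. The mixed bracket $[L_m,I_n]_q$ then determines the off-diagonal sequence $c_n$ that produces the $L_n$-component of $D_1(I_n)$, and couples $d_n$ back to $a_n$ through the $I$-coefficient; running the same procedure through the odd brackets isolates $e_n$ and $h_n$ as the remaining free directions and forces $b_n$, $f_n$, $g_n$ to vanish. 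Assembling the surviving solutions should yield exactly the four-parameter family spanned by $D_1,\dots,D_4$, with no homogeneous derivation of nonzero degree left over.

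The main obstacle is managing the coupling between the even block $(a,b,c,d)$ and the odd block $(e,f,g,h)$: the brackets $[I_m,G_n]_q$, $[L_m,G_n]_q$ and $[L_m,T_n]_q$ tie the two together, so the recurrences must be solved in a consistent order rather than independently, and the vanishing brackets \eqref{crochet6} are what discard the spurious solutions and rule out nonzero degree. The most delicate point is verifying that the $q$-number identities render the off-diagonal contribution in $D_1(I_n)$ simultaneously consistent with \emph{all} the bracket relations, since this is where the interplay of $\{n\}$, $\{n-1\}$ and $\{n+m\}$ has to conspire precisely; checking this compatibility, rather than the individual recurrences, is the step I expect to require the most care.
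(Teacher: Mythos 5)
Your proposal follows essentially the same route as the paper's proof: decompose $D$ into homogeneous components of degree $s$, write the ansatz $D(L_n)=a_{s,n}L_{n+s}+b_{s,n}I_{n+s}$, $D(I_n)=c_{s,n}L_{n+s}+d_{s,n}I_{n+s}$, etc., substitute into the $k=0$ specializations of the constraints \eqref{first-ega}--\eqref{for-egali}, and collapse the resulting two-variable $q$-difference relations by specializations such as $m=0$ (using $\{n+m\}=\{n\}+q^n\{m\}$) to kill every coefficient in degree $s\neq 0$ and pin down the four-parameter degree-zero family. The only cosmetic difference is that the paper imports the formulas $a_{s,n}=\delta_{s,0}na_{s,1}$ and $e_{s,n}=\delta_{s,0}e_{s,0}$ from earlier work on the $q$-deformed Witt superalgebra rather than re-deriving them, a step your plan would simply recover directly.
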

\begin{proof}

Let $D$ ba an even derivation of degree $s$:
\begin{align}
&\label{derivation1}D(L_n) = a_{s,n}L_{s+n}+b_{s,n}I_{s+n},~~D(I_n) = c_{s,n}L_{s+n}+d_{s,n}I_{s+n},\\
&D(G_n) = e_{s,n}G_{s+n}+f_{s,n}T_{s+n},~~D(T_n) = g_{s,n}G_{s+n}+h_{s,n}T_{s+n}\nonumber.
\end{align}
 $a_{s,n}=\delta_{s,0}na_{s,1}$ and $e_{s,n}=\delta_{s,0}e_{s,0}$ (See \cite{AmmarMakhloufSaadaoui1}).\\
By (\ref{first-ega}) and (\ref{derivation1}), we have\\
\begin{eqnarray*}
    (\{m\}-\{n\})b_{s,m+n}&=&\{s+m\}b_{s,m}-\{s+n\}b_{s,n}.
  \end{eqnarray*}
We deduce that
\begin{eqnarray*}
    (q^{n}-q^{m})b_{s,m+n}&=&(1-q^{s+m})b_{s,m}-(1-q^{s+n})b_{s,n}.
  \end{eqnarray*}
If $m = 0$, we have
 \begin{eqnarray*}
    (q^{n}-1)b_{s,n}&=&(1-q^{s})b_{s,0}-(1-q^{s+n})b_{s,n}.
  \end{eqnarray*}
If $s\neq 0$, we have
\begin{eqnarray*}
   b_{s,n}&=&\frac{1}{q^{n}}b_{s,0}.
  \end{eqnarray*}
We deduce that
\begin{eqnarray}
 \label{alfa-0-der2}(q^{n}-q^{m})\frac{1}{q^{n+m}}b_{s,0}&=&(1-q^{s+m})\frac{1}{q^{m}}b_{s,0}+(q^{s+n}-1)\frac{1}{q^{n}}b_{s,0}.
\end{eqnarray}Taking $n=2s$ and $m=2s$ we have $b_{s,0}=0$, so $b_{s,n}=0$ .\\
If $s=0$ then $b_{s,n}=0$.

Similarly we can prove  that
\begin{align*}
c_{s,n}&=-\delta_{s,0}q\{n-2\}c_{s,1},\\ d_{s,n}&=\delta_{s,0}(na_{s,1}+d_{s,1}),\\
f_{s,n}&=0,\\ g_{s,n}&=-\delta_{s,0}q\{n-1\}g_{s,0},\\
 h_{s,n}&=\delta_{s,0}(na_{s,1}+h_{s,0}).
\end{align*}

\begin{proposition} The set of odd  $\alpha^{0}$-derivations of the Hom-Lie superalgebra $\mathfrak{A}_q$ is
$$Der_{\alpha^{0},1}(\mathfrak{A}_q) =< D_1 > \oplus < D_2 >\oplus < D_3 >\oplus < D_4 >$$
where $D_1$, $D_2$, $D_3$ and $D_4$ are defined, with respect to the basis as
\begin{align}&D_{1}(L_n) = nG_{n-1},\ D_{1}(I_n) = nT_{n-1} ,D_{1}(G_n) =  D_{1}(T_n) =0,\\
&D_2(G_n) =\frac{q^n-1}{q^n}I_{n+1} ,\ D_2(L_n) =D_2(I_n) =D_2(T_n) =0,\\
&D_{3}(T_n) =-q\{n-1\} L_{n+1},\ D_{3}(L_n) =D_{3}(I_n) = D_{3}(G_n) =0,\\
&D_4(T_n) =I_{n+1} ,\ D_4(L_n) =D_4(G_n) =D_4(I_n) =0.
\end{align}
\end{proposition}
Let $D$ be an odd derivation of degree $s$:
\begin{align}\label{derivation2}
&D(L_n) = a_{s,n}G_{s+n}+b_{s,n}T_{s+n},~~D(I_n) = c_{s,n}G_{s+n}+d_{s,n}T_{s+n},\\
&D(G_n) = e_{s,n}L_{s+n}+f_{s,n}I_{s+n},~~ D(T_n) = g_{s,n}L_{s+n}+h_{s,n}I_{s+n}.\nonumber
\end{align}
 $a_{s,n}=\delta_{s,-1}na_{s,1}$ and $e_{s,n}=0$ (See  \cite{AmmarMakhloufSaadaoui1}).\\
By \eqref{first-ega} and (\ref{derivation2}) we have
\begin{eqnarray*}
    (\{m\}-\{n\})b_{s,m+n}&=&\{s+m+\}b_{s,m}-\{s+n+1\}b_{s,n}.
  \end{eqnarray*}
  We deduce that
\begin{eqnarray*}
    (q^{n}-q^{m})b_{s,m+n}&=&(1-q^{s+m+1})b_{s,m}-(1-q^{s+n+1})b_{s,n}.
  \end{eqnarray*}
If $m = 0$, we have
 \begin{eqnarray*}
    (q^{n}-1)b_{s,n}&=&(1-q^{s+1})b_{s,0}-(1-q^{s+n+1})b_{s,n}.
  \end{eqnarray*}
If $s\neq -1$, we have
\begin{eqnarray*}
   b_{s,n}&=&\frac{1}{q^{n}}b_{s,0}.
  \end{eqnarray*}
We deduce that
\begin{equation}\label{alfa-1-der1}
 (q^{n}-q^{m})\frac{1}{q^{n+m}}b_{s,0}=(1-q^{s+m+1})\frac{1}{q^{m}}b_{s,0}+(q^{s+n+1}-1)\frac{1}{q^{n}}b_{s,0}
\end{equation}
Taking $n=2s+2$ and $m=s+1$, we have $b_{s,0}=0$, so $b_{s,n}=0$.\\
If $s=-1$ and $n\neq m$, we have
\begin{eqnarray*}
    b_{-1,m+n}&=&\frac{1-q^{m}}{q^{n}-q^{m}}b_{-1,m}-\frac{1-q^{n}}{q^{n}-q^{m}}b_{-1,n}.
  \end{eqnarray*}
  If $m=1, \ n=4$, then
\begin{eqnarray}\label{14}
    b_{-1,5}&=&\frac{1-q}{q^{4}-q}b_{-1,1}-\frac{1-q^{4}}{q^{4}-q}b_{-1,4}.
  \end{eqnarray}If $m=1, \ n=3$, then
\begin{eqnarray}\label{13}
    b_{-1,4}&=&\frac{1-q}{q^{3}-q}b_{-1,1}-\frac{1-q^{3}}{q^{3}-q}b_{-1,3}.
  \end{eqnarray}If $m=1, \ n=2$, then
\begin{eqnarray}\label{12}
    b_{-1,3}&=&\frac{1-q}{q^{2}-q}b_{-1,1}-\frac{1-q^{2}}{q^{2}-q}b_{-1,2}\\
    &=&\frac{-1}{q}b_{-1,1}+\frac{1+q}{q}b_{-1,2}.\nonumber
  \end{eqnarray}
  Thus, using \eqref{14}, \eqref{13} and \eqref{12}, we have
  \begin{eqnarray*}
    b_{-1,5}&=&\frac{1-q}{q^{4}-q}b_{-1,1}-\frac{1-q^{4}}{q^{4}-q}b_{-1,4}\\
&=&\frac{1-q}{q^{4}-q}b_{-1,1}-\frac{1-q^{4}}{q^{4}-q}(\frac{1-q}{q^{3}-q}b_{-1,1}-\frac{1-q^{3}}{q^{3}-q}b_{-1,3})\\
&\vdots&\\
&=&0.
  \end{eqnarray*}
  So $b_{-1,n}=0$, then, for all $s,\ n$, $b_{s,n}=0$.\\
Similarly we can prove that
\begin{align*}
c_{s,n}&=0,\\ d_{s,n}&=\delta_{s,-1}na_{s,1},\\
f_{s,n}&=\delta_{s,1}\frac{q^n-1}{q^{n}}f_{s,0},\\ g_{s,n}&=-\delta_{s,1}q\{n-1\}g_{s,0},\\
 h_{s,n}&=\delta_{s,1}h_{s,0}.
\end{align*}
\end{proof}
\subsection{The $\alpha^{1}$-derivation of  Hom-Lie superalgebra $\mathfrak{A}_q$}
\begin{proposition}If $D$ is an $\alpha$-derivation then $D = 0$.
\end{proposition}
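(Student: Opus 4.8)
The plan is to reproduce, for $k=1$, the analysis already carried out for the $\alpha^{0}$-derivations, the essential new feature being that the structure equations (\ref{first-ega})--(\ref{for-egali}) now carry the weights $(1+q^{m})^{k}=(1+q^{m})$ and $(1+q^{n})^{k}=(1+q^{n})$ instead of the trivial weight obtained at $k=0$. As in the previous subsection, $Der_{\alpha}(\mathfrak{A}_q)$ decomposes into homogeneous components graded by degree $s\in\mathbb{Z}$ and parity $i\in\mathbb{Z}_2$, so it suffices to fix a single homogeneous $\alpha$-derivation $D$ of degree $s$ and of given parity and to show that all of its structure coefficients vanish.

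First I would treat the even case, writing $D$ as in (\ref{derivation1}) with coefficients $a_{s,n},b_{s,n},\dots,h_{s,n}$. Substituting into (\ref{first-ega}) with $k=1$, expanding each bracket by (\ref{crochet1})--(\ref{crochet6}) and equating the coefficients of $L_{s+n+m}$ and $I_{s+n+m}$, yields coupled recursions of the shape
\[
(\{m\}-\{n\})a_{s,n+m}=(1+q^{m})(\{s+n\}-\{m\})a_{s,n}+(1+q^{n})(\{n\}-\{s+m\})a_{s,m},
\]
together with the analogous relation for $b_{s,n}$; the equations (\ref{second-ega})--(\ref{for-egali}) then produce the corresponding recursions for the pairs $(c_{s,n},d_{s,n})$, $(e_{s,n},f_{s,n})$ and $(g_{s,n},h_{s,n})$. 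The odd case is handled identically, starting from (\ref{derivation2}).

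Next I would solve these recursions exactly as before: set $m=0$ to reduce each relation to a two-term identity linking $b_{s,n}$ to $b_{s,0}$, conclude for $s$ away from the exceptional degree that $b_{s,n}=q^{-n}b_{s,0}$, and then specialize $m,n$ (for instance the choice $n=2s,\ m=2s$ used in the even computation, or the sequential elimination $m=1$, $n=2,3,4,\dots$ of (\ref{14})--(\ref{12}) in the odd one) to force $b_{s,0}=0$, hence $b_{s,n}=0$ for all $n$. The genuinely new point is what happens at the degrees that survived in the $\alpha^{0}$ analysis. There the homogeneous derivations were supported at $s=0$ (even) and $s=-1$ (odd), precisely because the weight factor was identically $1$ and the recursion degenerated, leaving the one-parameter families $D_1,\dots,D_4$. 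For $k=1$, substituting $s=0$ (resp. $s=-1$) no longer produces this degeneracy: the surviving terms still carry the factors $(1+q^{n})$ and $(1+q^{m})$, which do not cancel, so even at the previously exceptional degree every coefficient is forced to vanish.

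The hard part is the bookkeeping in the odd sector. There one must show that the recursion at $s=-1$, now with the $(1+q^{m})$ weights inserted, admits only the zero solution; this requires the sequential-elimination argument analogous to (\ref{14})--(\ref{12}), and tracking the cancellations demands repeated use of the $q$-number identities $\{n+1\}=\{n\}+q^{n}$ and $\{n+m\}=\{n\}+q^{n}\{m\}$. Once every $a_{s,n},\dots,h_{s,n}$ is shown to vanish for all $s$ and both parities, summing over the homogeneous components gives $D=0$.
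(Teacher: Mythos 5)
Your plan breaks down at its central step: for $k=1$ the specialization $m=0$ does \emph{not} yield $b_{s,n}=q^{-n}b_{s,0}$. Because $(1+q^{m})$ becomes $2$ at $m=0$ while the companion term keeps its factor $(1+q^{n})$, the weighted recursion collapses instead to an identity of the shape
\begin{equation*}
\bigl(1+q^{n}-2q^{s+n}\bigr)\,b_{s,n}=(1+q^{n})(1-q^{s})\,b_{s,0}
\end{equation*}
in the even sector, and in the odd sector the paper obtains
$a_{s,n}=\frac{(1+q^{n})(q^{n}-q^{s+1})}{1+q^{n}-2q^{s+n+1}}a_{s,0}$ and
$b_{s,n}=\frac{(1+q^{n})(1-q^{s+1})}{2q^{s+n+1}-1}b_{s,0}$:
rational functions of $q^{n}$ with $n$-dependent denominators, not the geometric profile $q^{-n}b_{s,0}$ of the $\alpha^{0}$ analysis. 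As a result, the finite specializations you propose (the choice $n=2s$, $m=2s$, or the sequential elimination of \eqref{14}--\eqref{12}) no longer close the argument: substituting the closed forms back into the two-variable recursion produces a genuine functional identity in $q^{n}$ and $q^{m}$, and the paper disposes of it \emph{analytically}, by letting $n,m\to\infty$ when $q\in[0,1[$, and, when $q>1$, setting $m=s$ and letting $n\to\infty$, to force $a_{s,0}=b_{s,0}=0$. This limiting step, entirely absent from your proposal, is the actual engine of the $k=1$ proof; no purely algebraic specialization of the kind you describe is exhibited in the paper, and your sketch gives no substitute for it.

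You also misplace where the difficulty sits. The degrees $s=0$ and $s=-1$ that carried the nontrivial $\alpha^{0}$-derivations are not the delicate cases at $k=1$: in the odd sector, for instance, the numerator $1-q^{s+1}$ in the closed form for $b_{s,n}$ vanishes at $s=-1$, killing the coefficient outright, so no sequential-elimination argument is needed there, and your prediction that ``the hard part is the bookkeeping in the odd sector at $s=-1$'' does not match the structure of the problem. The genuinely hard part is arbitrary $s$, handled by the limit argument above. By contrast, the tail of your plan is sound: once $D(L_n)=0$ is established (so $a_{s,m}=0$), the remaining coefficients do fall out of \eqref{second-ega}--\eqref{for-egali} by elementary algebra, exactly as in the paper, e.g.\ $(2q^{s+n+1}-q^{n}-1)c_{s,n}=0$ forces $c_{s,n}=0$ and then $d_{s,n}=0$, and similarly for the $G$ and $T$ components.
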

\begin{proof}\textbf{Case 1}$:~~ |D| = 0$\\
Let $D$ be an even derivation of degree $s$:
\begin{align}\label{alpha-deriv1}
&D(L_n) = a_{s,n}L_{s+n}+b_{s,n}I_{s+n},~~D(I_n) = c_{s,n}L_{s+n}+d_{s,n}I_{s+n},\\&D(G_n) = e_{s,n}G_{s+n}+f_{s,n}T_{s+n},~~
D(T_n) = g_{s,n}G_{s+n}+h_{s,n}T_{s+n}.\nonumber\end{align}
By (\ref{first-ega}) and (\ref{alpha-deriv1}) we have
\begin{eqnarray*}
   (\{m\}-\{n\})a_{s,n+m} &=& (1 + q^{m})(\{m\}-\{s+n\})a_{s,n} + (1 + q^{n})(\{s+m\}-\{n\})a_{s,m}, \\
    (\{m\}-\{n\})b_{s,n+m} &=& (1 + q^{n})\{s+m\}a_{s,m} -(1 + q^{m})\{s+n\}b_{s,n}.
 \end{eqnarray*}
We deduce that
\begin{eqnarray*}
   (q^{n}-q^{m})a_{s,n+m} &=& (1 + q^{m})(q^{s+n}-q^{m})a_{s,n} + (1 + q^{n})(q^{n}-q^{s+m})a_{s,m}, \\
    (q^{n}-q^{m})b_{s,n+m} &=& (1 + q^{n})(1-q^{s+m})a_{s,m} -(1 + q^{m})(1-q^{s+n})b_{s,n}.
  \end{eqnarray*}
Then $a_{s,n} = 0$ and $b_{s,n}=0$, so $D(L_n) = 0$.\\
By (\ref{second-ega}) and (\ref{alpha-deriv1}) we have
\begin{eqnarray*}
   \{n\}c_{s,n+m} &=& (1 + q^{m})(\{n\}-\{m\})c_{s,n}, \\
    \{n\}d_{s,n+m} &=& (1 + q^{m})\{s+n\}d_{s,m} +(1 + q^{n})\{n\}a_{s,m}.
  \end{eqnarray*}
We deduce that
\begin{eqnarray*}
   (1-q^{n})c_{s,n+m} &=& (1 + q^{m})(q^{m}-q^{n})c_{s,n}, \\
    (1-q^{n})d_{s,n+m} &=& (1 + q^{m})(1-q^{s+n})d_{s,m} +(1 + q^{n})(1-q^{n})a_{s,m}.
  \end{eqnarray*}
Then $c_{s,n} = 0$ and $d_{s,n}=0$, so $D(I_n) = 0$.\\
Similarly we have $D(G_n) =D(T_n) = 0$. Hence $D\equiv 0$. \\
\textbf{Case 2}$:~~ |D| = 1$\\
Let $D$ be an odd derivation of degree $s$:
\begin{align}\label{alpha-deri2}
&D(L_n) = a_{s,n}G_{s+n}+b_{s,n}T_{s+n},~~D(I_n) = c_{s,n}G_{s+n}+d_{s,n}T_{s+n},\\&D(G_n) = e_{s,n}L_{s+n}+f_{s,n}I_{s+n},~~D(T_n) = g_{s,n}L_{s+n}+h_{s,n}I_{s+n}.\nonumber
\end{align}

By (\ref{first-ega}) and (\ref{alpha-deri2}) we have
\begin{eqnarray*}
   (\{m\}-\{n\})a_{s,m+n}&=&(1+q^{n})(\{s+m+1\}-\{n\})a_{s,m}+(1+q^{m})(\{m\}-\{s+n+1\})a_{s,n},\\
    (\{m\}-\{n\})b_{s,m+n}&=&(1+q^{n})\{s+m+1\}b_{s,m}-(1+q^{m})\{s+n+1\}b_{s,n}.
  \end{eqnarray*}
Then if $m\neq n$, we have
\begin{eqnarray*}
   a_{s,m+n}&=&\frac{(1+q^{n})(q^{n}-q^{s+m+1})}{q^{n}-q^{m}}a_{s,m}-\frac{(1+q^{m})(q^{m}-q^{s+n+1})}{q^{n}-q^{m}}a_{s,n}, \\
    b_{s,m+n}&=&\frac{(1+q^{n})(1-q^{s+m+1})}{q^{n}-q^{m}}b_{s,m}-\frac{(1+q^{m})(1-q^{s+n+1})}{q^{n}-q^{m}}b_{s,n}.
  \end{eqnarray*}
If $m=0$, we have
\begin{eqnarray*}
   a_{s,n}&=& \frac{(1+q^{n})(q^{n}-q^{s+1})}{1+q^{n}-2q^{s+n+1}}a_{s,0},\\
    b_{s,n}&=& \frac{(1+q^{n})(1-q^{s+1})}{2q^{s+n+1}-1}b_{s,0}.
  \end{eqnarray*}
So
\begin{eqnarray*}
   a_{s,m+n}&=& \frac{(1+q^{m+n})(q^{m+n}-q^{s+1})}{1+q^{m+n}-2q^{s+m+n+1}}a_{s,0},\\
    b_{s,m+n}&=& \frac{(1+q^{m+n})(1-q^{s+1})}{2q^{s+m+n+1}-1}b_{s,0}.
  \end{eqnarray*}
Then
\begin{eqnarray*}
   \frac{(1+q^{m+n})(q^{m+n}-q^{s+1})}{1+q^{m+n}-2q^{s+m+n+1}}a_{s,0}&=&\frac{(1+q^{n})(q^{n}-q^{s+m+1})(1+q^{m})(q^{m}-q^{s+1})}
{(q^{n}-q^{m})(1+q^{m+n}-2q^{s+m+n+1})}a_{s,0}  \\
   &-&\frac{(1+q^{m})(q^{m}-q^{s+n+1})(1+q^{n})(q^{n}-q^{s+1})}{(q^{n}-q^{m})(1+q^{n}-2q^{s+n+1})}a_{s,0}
\end{eqnarray*}
and
\begin{eqnarray*}
   \frac{(1+q^{m+n})(1-q^{s+1})}{2q^{s+m+n}}b_{s,0}&=&\frac{(1+q^{n})(1-q^{s+m+1})(1+q^{m})(1-q^{s+1})}
{(q^{n}-q^{m})(2q^{s+m+1}-1)}b_{s,0}  \\
   &-&\frac{(1+q^{m})(1-q^{s+n+1})(1+q^{n})(1-q^{s+1})}{(q^{n}-q^{m})(2q^{s+n+1}-1)}b_{s,0}.
\end{eqnarray*}
If $q \in [0,1[$, then letting $n,m\longrightarrow \propto $, we obtain $a_{s,0} = 0$ and $b_{s,0} = 0$. If $q > 1$ and setting $m = s$,
then if $n$ goes to infinity we obtain $a_{s,0}=0$ and $b_{s,0} = 0$. We deduce that $D(L_n)=0$.\\
By (\ref{second-ega}) and $D(I_n) = c_{s,n}G_{s+n}+d_{s,n}T_{s+n}$, we obtain
\begin{eqnarray*}
   \{n\}c_{s,m+n}&=&(1+q^{m})(\{s+n+1\}-\{m\})c_{s,n},  \\
    \{n\}d_{s,m+n}&=&(1+q^{m})\{s+n+1\}d_{s,n}+(1+q^{n})\{n\}a_{s,m}.
  \end{eqnarray*}
If $n\neq 0$, we have
\begin{eqnarray*}
   c_{s,m+n}&=&\frac{(1+q^{m})(q^{m}-q^{s+n+1})}{1-q^{n}}c_{s,n},  \\
    d_{s,m+n}&=&\frac{(1+q^{m})(1-q^{s+n+1})}{1-q^{n}}d_{s,n}+(1+q^{n})a_{s,m}.
  \end{eqnarray*}
If $m=0$, we have
\begin{eqnarray*}
   (2q^{s+n+1}-q^{n}-1)c_{s,n}&=&0,\\
    d_{s,n}&=&\frac{2(1-q^{s+n+1})}{1-q^{n}}d_{s,n}+(1+q^{n})a_{s,0}.
\end{eqnarray*}
Since $a_{s,0}=0$, so $c_{s,n}=0$ and $d_{s,n}=0$. We deduce that $D(I_n)=0$.\\
By (\ref{third-egali}) and (\ref{alpha-deri2}), we
deduce that $D(G_n)=0$ and by (\ref{for-egali}) and (\ref{alpha-deri2}), we have
$D(T_n)=0$, hence $D\equiv 0.$
\end{proof}

\subsection{The $q$-derivation of the Hom-Lie superalgebra $\mathfrak{A}_q$}
In this section, we study the $q$-derivations of $\mathfrak{A}_q$. The derivation algebra of $\mathfrak{A}_q$ is denoted by
$\mathcal{Q}Der\mathfrak{A}_q$. Since $\mathfrak{A}_q$ is $\mathbb{Z}_{2}$-graded Hom-Lie superalgebra, we have
$$\mathcal{Q}Der\mathfrak{A}_q=\oplus_{s\in \mathbb{Z}}(\mathcal{Q}Der\mathfrak{A}_q)_{0,s}\oplus (\mathcal{Q}Der\mathfrak{A}_q)_{1,s}, $$
where $(\mathcal{Q}Der\mathfrak{A}_q)_{0}$ denotes the set of even derivations of $\mathfrak{A}_q$, and $(\mathcal{Q}Der\mathfrak{A}_q)_{1}$ denotes the set of odd derivations of $\mathfrak{A}_q$
\begin{definition} Let  $\varphi :\mathfrak{A}_q\rightarrow\mathfrak{A}_q$ a linear map, then $\varphi$ is called an even  $q$-derivation (resp. an odd  $q$-derivation) if
\begin{equation}\label{q-deri-paire}
\varphi([x,y]_{q})=\frac{1}{1+q^{s}}\Big([\varphi(x),\alpha_{q}(y)]_{q}+[\alpha_{q}(x),\varphi(y)]_{q}\Big)
\end{equation}
\begin{equation}\label{q-deri-impaire}
(resp.~~ \varphi([x,y]_{q})=\frac{1}{1+q^{s+1}}\Big([\varphi_{q}(x),\alpha_{q}(y)]_{q}+(-1)^{|x|}[\alpha_{q}(x),\varphi(y)]_{q})\Big).
\end{equation}
where $x, y$ are homogeneous elements in $ \mathfrak{A}_q$.\\
For a fixed $a\in (\mathfrak{A}_q)_{i}$, we obtain the following $q$-derivation
$$\begin{array}{cccc}
  \varphi: & \mathfrak{A}_q & \longrightarrow & \mathfrak{A}_q \\
   & x & \longmapsto & [a,x].
\end{array}$$
The map is denoted by $ad_{a}$ and is called the inner $q$-derivation.
\end{definition}
\begin{proposition}If $\varphi$ is an even $q$-derivation of degree $s\neq0$ then it is an inner derivation, more
precisely:
$$(\mathcal{Q}Der\mathfrak{A}_q)_{0,s} =<ad_{L_{s}}+ad_{I_{s}}> .$$
\end{proposition}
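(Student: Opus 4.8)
The plan is to show that an even $q$-derivation $\varphi$ of nonzero degree $s$ is forced to agree with the inner $q$-derivation $\mathrm{ad}_{L_s}+\mathrm{ad}_{I_s}$ up to scalars, and then to check that these inner derivations in fact exhaust the space. First I would write $\varphi$ in components with respect to the basis, exactly as in the previous propositions: for a degree-$s$ even map,
\begin{align*}
&\varphi(L_n)=a_{s,n}L_{s+n}+b_{s,n}I_{s+n},\quad \varphi(I_n)=c_{s,n}L_{s+n}+d_{s,n}I_{s+n},\\
&\varphi(G_n)=e_{s,n}G_{s+n}+f_{s,n}T_{s+n},\quad \varphi(T_n)=g_{s,n}G_{s+n}+h_{s,n}T_{s+n}.
\end{align*}
Substituting this ansatz into the defining identity \eqref{q-deri-paire} applied to each bracket \eqref{crochet1}--\eqref{crochet6}, and using $\alpha_q(L_n)=(1+q^n)L_n$ together with the factor $\tfrac{1}{1+q^s}$, produces a family of recurrences on the coefficients of the same shape as \eqref{first-ega}--\eqref{for-egali} but now carrying the normalizing denominator, so that the extra factors $(1+q^{s})$ cancel against the $(1+q^m)^{\!}$, $(1+q^n)^{\!}$ produced by $\alpha_q$.

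**Next I would solve these recurrences.** The key structural point, for $s\neq 0$, is that unlike the $\alpha$-derivation case (Proposition on $\alpha^1$-derivations) where everything collapsed to zero, here the normalization $\tfrac{1}{1+q^s}$ shifts the functional equations so that a one-parameter family of solutions survives for each of the $L$- and $I$-components, while the $G$- and $T$-components are forced to vanish (or to be determined by the surviving parameters). Concretely, I expect the recurrence for $a_{s,n}$ to force $a_{s,n}$ proportional to a fixed multiple of $\{n\}$-type expressions matching $[L_s,L_n]_q=(\{s\}-\{n\})L_{s+n}$ and $[I_s,L_n]_q$, i.e. exactly the coefficients of $\mathrm{ad}_{L_s}+\mathrm{ad}_{I_s}$. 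I would compute $\mathrm{ad}_{L_s}(x)=[L_s,x]_q$ and $\mathrm{ad}_{I_s}(x)=[I_s,x]_q$ on each generator using \eqref{crochet1}--\eqref{crochet6}, verify via \eqref{q-deri-paire} that each is genuinely a degree-$s$ even $q$-derivation, and then match the free scalar obtained from the recurrence against the coefficient produced by this specific combination, concluding $\varphi=\lambda(\mathrm{ad}_{L_s}+\mathrm{ad}_{I_s})$.

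**The hard part will be** the bookkeeping in the recurrences: showing that the homogeneous solution space is exactly one-dimensional and is spanned by the inner derivation, rather than accidentally larger or smaller. In particular I must rule out spurious solutions in the $G,T$ block and confirm the $L,I$ block contributes precisely one parameter. The cleanest route is to specialize the two-index recurrences (e.g. set $m=0$, then $m=s$, then send an index to infinity as was done for the $\alpha^1$-case) to pin down the single surviving degree of freedom, and simultaneously check that $\mathrm{ad}_{L_s}$ and $\mathrm{ad}_{I_s}$ are not independent as $q$-derivations but combine into the stated generator. Once the coefficient families are identified with those of $\mathrm{ad}_{L_s}+\mathrm{ad}_{I_s}$, the equality $(\mathcal{Q}Der\mathfrak{A}_q)_{0,s}=\,<\!\mathrm{ad}_{L_s}+\mathrm{ad}_{I_s}\!>$ follows, since the reverse inclusion is immediate from the fact that inner $q$-derivations are $q$-derivations of the appropriate degree.
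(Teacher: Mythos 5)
Your overall method --- write $\varphi$ in components, substitute into \eqref{q-deri-paire} bracket by bracket, solve the resulting coefficient equations, and match against inner derivations --- is exactly the paper's, and the simplification you hunt for at the end is realized there in the cleanest possible way: one evaluates \eqref{q-deri-paire} on $[X_n,L_0]_q$ for $X\in\{L,I,G,T\}$, which closes each recurrence in a single step (no $m=s$ specialization or $n\to\infty$ limit is needed; those devices belong to the $\alpha^1$-case, where the conclusion is $D=0$). The genuine problem is your structural prediction about the answer. You announce that the hard part is to show the solution space is \emph{one}-dimensional, that $\varphi=\lambda(ad_{L_s}+ad_{I_s})$, and that $ad_{L_s}$ and $ad_{I_s}$ ``are not independent as $q$-derivations but combine into the stated generator.'' This is false, and a proof attempt along these lines would stall. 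Applying \eqref{q-deri-paire} to $[L_n,L_0]_q=\{n\}L_n$, with $\alpha_q(L_0)=2L_0$ and $\alpha_q(L_n)=(1+q^n)L_n$, yields
\begin{align*}
\{n\}a_{s,n}&=\tfrac{1+q^{n}}{1+q^{s}}(\{n\}-\{s\})\,a_{s,0}+\tfrac{2}{1+q^{s}}\{s+n\}\,a_{s,n},\\
\{n\}b_{s,n}&=\tfrac{2}{1+q^{s}}\{s+n\}\,b_{s,n}-\tfrac{1+q^{n}}{1+q^{s}}\{s\}\,b_{s,0},
\end{align*}
whence $a_{s,n}=\frac{q^{s}-q^{n}}{q^{s}-1}a_{s,0}$ and $b_{s,n}=b_{s,0}$, with $a_{s,0}$ and $b_{s,0}$ completely \emph{independent} free parameters; the $I$-block then gives $c_{s,n}=0$ and $d_{s,n}=\frac{1-q^{n}}{q^{s}-1}a_{s,0}$, tied to $a_{s,0}$ rather than introducing anything new. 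The conclusion is $\varphi=\frac{a_{s,0}}{\{s\}}ad_{L_s}+\frac{b_{s,0}}{\{s\}}ad_{I_s}$, a \emph{two}-parameter family, and $ad_{L_s}$, $ad_{I_s}$ are manifestly linearly independent maps: $ad_{L_s}(L_n)=(\{s\}-\{n\})L_{s+n}$ has a pure $L$-component while $ad_{I_s}(L_n)=\{s\}I_{s+n}$ is pure $I$. The notation $<ad_{L_s}+ad_{I_s}>$ in the statement must therefore be read as the span of the two inner derivations, on the model of $<D_1>\oplus<D_2>\oplus\cdots$ in the $\alpha^0$-derivation propositions; your program of ``ruling out'' the second parameter would have you trying to prove a false one-dimensionality, and the recurrence for $b_{s,n}$ above leaves nothing to rule out.

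Two secondary points. First, the $G$- and $T$-components are not ``forced to vanish'': by \eqref{crochet3} and \eqref{crochet4} one gets $\varphi(G_n)=\frac{a_{s,0}}{\{s\}}(\{s\}-\{n+1\})G_{s+n}+b_{s,0}T_{s+n}$, generically nonzero and governed by the same two parameters --- so only your parenthetical hedge ``(or to be determined by the surviving parameters)'' is the correct branch. Second, your closing observation is sound and worth keeping: the reverse inclusion does require checking that $ad_{L_s}$ and $ad_{I_s}$ satisfy \eqref{q-deri-paire} with the degree-$s$ normalization, which follows from the Hom-Jacobi identity together with $\alpha_q(L_s)=(1+q^s)L_s$ and $\alpha_q(I_s)=(1+q^s)I_s$; the paper leaves this implicit in calling $ad_a$ an inner $q$-derivation.
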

\begin{proof}Let $\varphi$ be an even $q$-derivation of degree  $s\neq 0$:
\begin{align}\label{even-q-der}
&\varphi(L_n) = a_{s,n}L_{s+n}+b_{s,n}I_{s+n},~~\varphi(I_n) = c_{s,n}L_{s+n}+d_{s,n}I_{s+n},\\&
\varphi(G_n) = e_{s,n}G_{s+n}+f_{s,n}T_{s+n},~~\varphi(T_n) = g_{s,n}G_{s+n}+h_{s,n}T_{s+n}.\nonumber
\end{align}

By (\ref{crochet1}) and (\ref{even-q-der}), we have
\begin{eqnarray*}
  \{n\}\varphi(L_n) &=&\varphi([L_n,L_0]_{q})\\
   &=& \frac{1}{1+q^{s}}\Big([\varphi(L_n),\alpha_{q}(L_0)]_{q}+[\alpha_{q}(L_n),\varphi(L_0)]_{q} \Big)\\
   &=&\frac{1}{1+q^{s}}\Big([a_{s,n}L_{s+n}+b_{s,n}I_{s+n},2L_0]_{q}+[(1+q^{n})L_n,a_{s,0}L_{s}+b_{s,0}I_{s}]_{q} \Big)\\
   &=&\Big( \frac{1+q^{n}}{1+q^{s}}(\{n\}-\{s\})a_{s,0}+ \frac{2}{1+q^{s}} \{s+n\}a_{s,n}\Big) L_{s+n}
+\Big(\frac{2}{1+q^{s}}\{s+n\}b_{s,n}-\frac{1+q^{n}}{1+q^{s}}\{s\}b_{s,0}\Big)I_{s+n},
\end{eqnarray*}
then \begin{eqnarray*}
      \{n\}a_{s,n}&=& \frac{1+q^{n}}{1+q^{s}}(\{n\}-\{s\})a_{s,0}+ \frac{2}{1+q^{s}} \{s+n\}a_{s,n},\\
       \{n\}b_{s,n}&=& \frac{2}{1+q^{s}}\{s+n\}b_{s,n}-\frac{1+q^{n}}{1+q^{s}}\{s\}b_{s,0}.
       \end{eqnarray*}
We deduce that, $a_{s,n}=\frac{q^{s}-q^{n}}{q^{s}-1}a_{s,0}$ and $b_{s,n}=b_{s,0}$. Moreover,
\begin{eqnarray*}
\frac{a_{s,0}}{\{s\}}ad_{L_s}(L_n)+ \frac{b_{s,0}}{\{s\}}ad_{I_s}(L_n)&=&\frac{a_{s,0}}{\{s\}}[L_s,L_n]_{q}+\frac{b_{s,0}}{\{s\}}[I_s,L_n]_{q}\\
&=&\frac{a_{s,0}}{\{s\}}(\{s\}-\{n\})L_{s+n}+\frac{b_{s,0}}{\{s\}}\{s\}I_{s+n}\\
&=&\frac{q^{n}-q^{s}}{1-q^{s}}a_{s,0}L_{s+n}+\frac{b_{s,0}}{1-q^{s}}I_{s+n}\\
&=& a_{s,n}L_{s+n}+b_{s,n}I_{s+n}.
\end{eqnarray*}
So
\begin{equation}\label{q-deriv-paire-ad1}
    \varphi(L_n)=\frac{a_{s,0}}{\{s\}}ad_{L_s}(L_n)+ \frac{b_{s,0}}{\{s\}}ad_{I_s}(L_n).
\end{equation}
By (\ref{crochet2}) and (\ref{even-q-der}), we obtain
\begin{eqnarray*}
  \{n\}\varphi(I_n) &=&\varphi([I_n,L_0]_{q})  \\
   &=& \frac{1}{1+q^{s}}\Big([\varphi(I_n),\alpha_{q}(L_0)]_{q}+[\alpha_{q}(I_n),\varphi(L_0)]_{q} \Big)\\
   &=&\frac{1}{1+q^{s}}\Big([c_{s,n}L_{s+n}+d_{s,n}I_{s+n},2L_0]_{q}+[(1+q^{n})I_n,a_{s,0}L_{s}+b_{s,0}I_{s}]_{q} \Big)\\
   &=& \frac{2}{1+q^{s}} \{s+n\}c_{s,n} L_{s+n}
+\Big(\frac{2}{1+q^{s}}\{s+n\}d_{s,n}+\frac{1+q^{n}}{1+q^{s}}\{n\}a_{s,0}\Big)I_{s+n},
\end{eqnarray*}
then \begin{eqnarray*}
      \{n\}c_{s,n}&=& \frac{2}{1+q^{s}} \{s+n\}c_{s,n},\\
       \{n\}d_{s,n}&=& \frac{2}{1+q^{s}}\{s+n\}d_{s,n}+\frac{1+q^{n}}{1+q^{s}}\{n\}a_{s,0}.
       \end{eqnarray*}
We deduce that, $c_{s,n}=0,~~\forall~~n\in \mathbb{N}$ and $d_{s,n}=\frac{1-q^{n}}{q^{s}-1}a_{s,0}$. Moreover,
\begin{eqnarray*}
\frac{a_{s,0}}{\{s\}}ad_{L_s}(I_n)+ \frac{b_{s,0}}{\{s\}}ad_{I_s}(I_n)&=&\frac{a_{s,0}}{\{s\}}[L_s,L_n]_{q}+\frac{b_{s,0}}{\{s\}}[I_s,L_n]_{q}\\
&=&\frac{-\{n\}}{\{s\}}a_{s,0}L_{s+n}\\
&=&-\frac{1-q^{n}}{1-q^{s}}a_{s,0}I_{s+n}\\
&=& d_{s,n}I_{s+n}.
\end{eqnarray*}
So
\begin{equation}\label{q-deriv-paire-ad2}
    \varphi(I_n)=\frac{a_{s,0}}{\{s\}}ad_{L_s}(I_n)+ \frac{b_{s,0}}{\{s\}}ad_{I_s}(I_n).
\end{equation}
Applying the same relations (\ref{crochet3}) and (\ref{even-q-der}), we have
\begin{equation}\label{q-deriv-paire-ad3}
    \varphi(G_n)=\frac{a_{s,0}}{\{s\}}ad_{L_s}(G_n)+ \frac{b_{s,0}}{\{s\}}ad_{I_s}(G_n).
\end{equation}

And by (\ref{crochet5}) and (\ref{even-q-der}), we have
\begin{equation}\label{q-deriv-paire-ad4}
    \varphi(T_n)=\frac{a_{s,0}}{\{s\}}ad_{L_s}(T_n)+ \frac{b_{s,0}}{\{s\}}ad_{I_s}(T_n).
\end{equation}
Using (\ref{q-deriv-paire-ad1}), (\ref{q-deriv-paire-ad2}), (\ref{q-deriv-paire-ad3}) and (\ref{q-deriv-paire-ad4}) we deduce that $\varphi=\frac{a_{s,0}}{\{s\}}ad_{L_s}+ \frac{b_{s,0}}{\{s\}}ad_{I_s}$.\\
\end{proof}
\begin{proposition}If $\varphi$ is an odd $q$-derivation of degree $s\neq-1$ then it is an inner derivation, more
precisely:
$$(\mathcal{Q}Der\mathfrak{A}_q)_{1,s} =<ad_{G_{s}}+ad_{T_{s}}>.$$
\end{proposition}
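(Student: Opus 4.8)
The plan is to follow verbatim the scheme of the preceding even case, simply replacing the even defining identity \eqref{q-deri-paire} by its odd counterpart \eqref{q-deri-impaire} and the generators $L_s,I_s$ by $G_s,T_s$. First I would write a homogeneous odd $q$-derivation of degree $s$ in components with respect to the basis, with parity reversed on each generator:
$$\varphi(L_n)=a_{s,n}G_{s+n}+b_{s,n}T_{s+n},\quad \varphi(I_n)=c_{s,n}G_{s+n}+d_{s,n}T_{s+n},$$
$$\varphi(G_n)=e_{s,n}L_{s+n}+f_{s,n}I_{s+n},\quad \varphi(T_n)=g_{s,n}L_{s+n}+h_{s,n}I_{s+n}.$$

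Next I would feed the four test brackets $[L_n,L_0]_q=\{n\}L_n$, $[I_n,L_0]_q=\{n\}I_n$, $[G_n,L_0]_q=\{n+1\}G_n$ and $[T_n,L_0]_q=\{n+1\}T_n$ into the odd rule \eqref{q-deri-impaire}. Since $\varphi$ is odd, the sign $(-1)^{|x|}$ equals $+1$ for $x=L_n,I_n$ and $-1$ for $x=G_n,T_n$; after applying $\alpha_q$ on generators and reducing with \eqref{crochet1}--\eqref{crochet6} together with super-skew-symmetry (for instance $[G_{s+n},L_0]_q=\{s+n+1\}G_{s+n}$ and $[I_n,G_s]_q=\{n\}T_{n+s}$), each bracket yields a linear recursion relating the coefficient at $n$ to its value at $0$. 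The recursions governing $c,e,f,g,h$ come out homogeneous, of the form (nonzero prefactor)$\cdot(\text{coeff})=0$, so these coefficients all vanish, which matches the fact that $ad_{G_s}+ad_{T_s}$ kills $G_n$ and $T_n$. The inhomogeneous recursions for $a,b,d$ solve to
$$a_{s,n}=\frac{q^{s+1}-q^{n}}{q^{s+1}-1}\,a_{s,0},\quad b_{s,n}=b_{s,0},\quad d_{s,n}=-\frac{1-q^{n}}{1-q^{s+1}}\,a_{s,0},$$
leaving exactly the two free parameters $a_{s,0}$ and $b_{s,0}$.

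Finally I would compute the inner derivations directly from \eqref{crochet3}, \eqref{crochet4}, \eqref{crochet5} and super-skew-symmetry, obtaining
$$ad_{G_s}(L_n)=(\{s+1\}-\{n\})G_{s+n},\quad ad_{G_s}(I_n)=-\{n\}T_{s+n},\quad ad_{T_s}(L_n)=\{s+1\}T_{s+n},$$
with all remaining values zero. Matching coefficients with the solved recursions gives
$$\varphi=\frac{a_{s,0}}{\{s+1\}}\,ad_{G_s}+\frac{b_{s,0}}{\{s+1\}}\,ad_{T_s},$$
which is well defined precisely because $s\neq-1$ forces $\{s+1\}\neq0$; this is exactly where the hypothesis enters and is the feature distinguishing the present degree range from the exceptional $s=-1$, where outer derivations can occur. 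Hence every odd $q$-derivation of degree $s\neq-1$ is inner, i.e. $(\mathcal{Q}Der\mathfrak{A}_q)_{1,s}=<ad_{G_s}+ad_{T_s}>$. I expect the only real difficulty to be bookkeeping: keeping the parity signs $(-1)^{|x|}$ and the super-skew-symmetric reordering of the odd generators consistent across all four bracket identities, since a single sign slip would propagate through every recursion.
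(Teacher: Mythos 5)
Your plan coincides with the paper's proof in every essential respect: the same component ansatz \eqref{odd-q-derivation}, the same test brackets against $L_0$ reduced via \eqref{crochet1}--\eqref{crochet6} in the odd rule \eqref{q-deri-impaire}, the same solved recursions ($a_{s,n}=\frac{q^{s+1}-q^{n}}{q^{s+1}-1}a_{s,0}$, $b_{s,n}=b_{s,0}$, $c_{s,n}=0$, and your $d_{s,n}=-\frac{1-q^{n}}{1-q^{s+1}}a_{s,0}$ is exactly the paper's $\frac{1-q^{n}}{q^{s+1}-1}a_{s,0}$), the same values of $ad_{G_s}$ and $ad_{T_s}$ on the generators, and the same final identification $\varphi=\frac{a_{s,0}}{\{s+1\}}ad_{G_s}+\frac{b_{s,0}}{\{s+1\}}ad_{T_s}$, with $s\neq-1$ entering precisely where you say, through $\{s+1\}\neq 0$. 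Incidentally, your recursions are cleaner than the paper's displayed ones, which contain typos (a $\{s+n\}$ that should be $\{s+n+1\}$ and a spurious factor $2$ in the $b$-equation); your solved formulas agree with the corrected equations and with the paper's conclusions.

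One step is overstated, however. For the coefficients $e,f$ (resp.\ $g,h$), the brackets $[G_n,L_0]_q=\{n+1\}G_n$ (resp.\ $[T_n,L_0]_q=\{n+1\}T_n$) yield the homogeneous relation $\bigl(\{n+1\}-\frac{2\{s+n\}}{1+q^{s+1}}\bigr)\cdot(\text{coeff})=0$, and this prefactor is \emph{not} always nonzero: at $(s,n)=(1,-1)$ both $\{n+1\}$ and $\{s+n\}$ vanish, so the $L_0$-brackets alone leave $e_{1,-1},f_{1,-1},g_{1,-1},h_{1,-1}$ undetermined. The repair is immediate and is what the paper's appeal to \eqref{crochet3} and \eqref{crochet5} amounts to: use $[L_m,G_n]_q$ and $[L_m,T_n]_q$ for general $m$; for instance, $m=1$ gives $e_{1,0}=\frac{1+q}{1+q^{2}}\,e_{1,-1}$, while the nondegenerate relation at $n=0$ forces $e_{1,0}=0$, hence $e_{1,-1}=0$. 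By contrast, for $a,b,c,d$ the prefactor equals $-(1+q^{n})(1-q^{s+1})$ up to a nonzero constant, so there the hypothesis $s\neq-1$ genuinely guarantees nondegeneracy, exactly as you claim; the degeneracy above is thus the one place where your ``single recursion with nonzero prefactor'' bookkeeping would fail, and it needs the extra family of brackets rather than more careful signs.
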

\begin{proof}Let $\varphi$ be an odd $q$-derivation of degree $s\neq -1$:
\begin{align}\label{odd-q-derivation}
&\varphi(L_n) = a_{s,n}G_{s+n}+b_{s,n}T_{s+n},~~\varphi(I_n) = c_{s,n}G_{s+n}+d_{s,n}T_{s+n},\\&
\varphi(G_n) = e_{s,n}L_{s+n}+f_{s,n}I_{s+n},~~\varphi(T_n) = g_{s,n}L_{s+n}+h_{s,n}I_{s+n},\nonumber
\end{align}
By (\ref{crochet1}) and (\ref{odd-q-derivation}), we have
\begin{eqnarray*}
  \{n\}\varphi(L_n) &=&\varphi([L_n,L_0]_{q})  \\
   &=& \frac{1}{1+q^{s+1}}\Big([\varphi(L_n),\alpha_{q}(L_0)]_{q}+[\alpha_{q}(L_n),\varphi(L_0)]_{q} \Big)\\
   &=&\frac{1}{1+q^{s+1}}\Big([a_{s,n}G_{s+n}+b_{s,n}T_{s+n},2L_0]_{q}+[(1+q^{n})L_n,a_{s,0}G_{s}+b_{s,0}T_{s}]_{q} \Big)\\
   &=&\Big(\frac{1+q^{n}}{1+q^{s+1}}(\{n\}-\{s+1\})a_{s,0}+ \frac{2}{1+q^{s+1}}\{s+n+1\}a_{s,n}\Big) G_{s+n}\\
&+& \Big(\frac{2}{1+q^{s+1}}\{s+n\}b_{s,n}-\frac{2(1+q^{n})}{1+q^{s+1}}\{s+1\}b_{s,0}\Big)T_{s+n},
\end{eqnarray*}
then
\begin{eqnarray*}
    \{n\}a_{s,n}&=& \frac{1+q^{n}}{1+q^{s+1}}(\{n\}-\{s+1\})a_{s,0}+ \frac{2}{1+q^{s+1}}\{s+n+1\}a_{s,n}, \\
    \{n\}b_{s,n}&=& \frac{2}{1+q^{s+1}}\{s+n\}b_{s,n}-\frac{2(1+q^{n})}{1+q^{s+1}}\{s+1\}b_{s,0}.
  \end{eqnarray*}
We deduce that, $a_{s,n} = \frac{q^{s+1}-q^{n}}{q^{s+1}-1} a_{s,0}$ and $b_{s,n}=b_{s,0}$. On the other hand,\\
\begin{eqnarray*}
\frac{a_{s,0}}{\{s+1\}}ad_{G_s}(L_n)+\frac{b_{s,0}}{\{s+1\}}ad_{T_s}(L_n)&=&\frac{a_{s,0}}{\{s+1\}}[G_s,L_n]_{q}+\frac{b_{s,0}}{\{s+1\}}[T_s,L_n]_{q}\\
&=&\frac{a_{s,0}}{\{s+1\}}(\{s+1\}-\{n\})G_{s+n}+\frac{b_{s,0}}{\{s+1\}}\{s+1\}T_{s+n}\\
&=& \frac{q^{s+1}-q^{n}}{q^{s+1}-1} a_{s,0}G_{s+n}+b_{s,0}T_{s+n}\\
&=& a_{s,n}G_{s,n}+b_{s,n}T_{s,n}.
\end{eqnarray*}
So $\varphi(L_n)=\frac{a_{s,0}}{\{s+1\}}ad_{G_{s}}(L_n)+\frac{b_{s,0}}{\{s+1\}}ad_{T_s}(L_n)$.\\
By (\ref{crochet2}) and (\ref{odd-q-derivation}), we have
\begin{eqnarray*}
  \{n\}\varphi(I_n) &=&\varphi([I_n,L_0]_{q})  \\
   &=& \frac{1}{1+q^{s+1}}\Big([\varphi(I_n),\alpha_{q}(L_0)]_{q}+[\alpha_{q}(I_n),\varphi(L_0)]_{q} \Big)\\
   &=&\frac{1}{1+q^{s+1}}\Big([c_{s,n}G_{s+n}+d_{s,n}T_{s+n},2L_0]_{q}+[(1+q^{n})I_n,a_{s,0}G_{s}+b_{s,0}T_{s}]_{q} \Big)\\
   &=& \frac{2}{1+q^{s+1}}\{s+n+1\}c_{s,n}G_{s+n}
+ \Big(\frac{2}{1+q^{s+1}}\{s+n+1\}d_{s,n}+\frac{1+q^{n}}{1+q^{s+1}}\{n\}b_{s,0}\Big)T_{s+n},
\end{eqnarray*}
then
\begin{eqnarray*}
    \{n\}c_{s,n}&=& \frac{2}{1+q^{s+1}}\{s+n+1\}c_{s,n},\\
    \{n\}d_{s,n}&=& \frac{2}{1+q^{s+1}}\{s+n+1\}d_{s,n}+\frac{1+q^{n}}{1+q^{s+1}}\{n\}a_{s,0}.
  \end{eqnarray*}
We deduce that, $c_{s,n} =0$ and $d_{s,n}= \frac{1-q^{n}}{q^{s+1}-1}a_{s,0}$. On the other hand,\\
\begin{eqnarray*}
\frac{a_{s,0}}{\{s+1\}}ad_{G_s}(I_n)+\frac{b_{s,0}}{\{s+1\}}ad_{T_s}(I_n)&=&\frac{a_{s,0}}{\{s+1\}}[G_s,I_n]_{q}+\frac{b_{s,0}}{\{s+1\}}[T_s,I_n]_{q}\\
&=&-\frac{\{n\}}{\{s+1\}}a_{s,0}T_{s+n}\\
&=& \frac{q^{n}-1}{1-q^{s+1}} a_{s,0}T_{s+n}\\
&=& d_{s,n}T_{s,n}.
\end{eqnarray*}
So $\varphi(I_n)=\frac{a_{s,0}}{\{s+1\}}ad_{G_{s}}(I_n)+\frac{b_{s,0}}{\{s+1\}}ad_{T_s}(I_n)$.\\
similarly, by (\ref{crochet3}) and (\ref{odd-q-derivation}), we have
 $$\varphi(G_n)=\frac{a_{s,0}}{\{s+1\}}ad_{G_{s}}(G_n)+\frac{b_{s,0}}{\{s+1\}}ad_{T_s}(G_n).$$
And by (\ref{crochet5}) and (\ref{odd-q-derivation}), we have
 $$\varphi(T_n)=\frac{a_{s,0}}{\{s+1\}}ad_{G_{s}}(T_n)+\frac{b_{s,0}}{\{s+1\}}ad_{T_s}(T_n).$$
Which implies that $$\varphi=\frac{a_{s,0}}{\{s+1\}}ad_{G_{s}}+\frac{b_{s,0}}{\{s+1\}}ad_{T_s}.$$
\end{proof}

\section{Cohomology of $n$-ary Hom-superalgebras induced by cohomology of Hom-Leibniz superalgebras}

\subsection{$n$-ary Hom-Nambu superalgebras}
In this section, we recall the definitions of $n$-ary Hom-Nambu algebras and $n$-ary Hom-Nambu-Lie algebras, introduced in \cite{makh} by Ataguema, Makhlouf and Silvestrov and we generalize them  to superalgebras cases.
\begin{definition}
An \emph{$n$-ary Hom-Nambu} algebra is a triple $(N, [\cdot ,..., \cdot],  \widetilde{\alpha} )$ consisting of a vector space  $N$, an
$n$-linear map $[\cdot ,..., \cdot ] :  N^{ n}\longrightarrow N$ and a family
$\widetilde{\alpha}=(\alpha_i)_{1\leq i\leq n-1}$ of  linear maps $ \alpha_i:\ \ N\longrightarrow N$, satisfying \\
  \begin{eqnarray}\label{NambuIdentity}
  && \big[\alpha_1(x_1),....,\alpha_{n-1}(x_{n-1}),[y_1,....,y_{n}]\big]= \\ \nonumber
&& \sum_{i=1}^{n}\big[\alpha_1(y_1),....,\alpha_{i-1}(y_{i-1}),[x_1,....,x_{n-1},y_i]
  ,\alpha_i(y_{i+1}),...,\alpha_{n-1}(y_n)\big],
  \end{eqnarray}
  for all $(x_1,..., x_{n-1})\in N^{ n-1}$, $(y_1,...,  y_n)\in N^{ n}.$\\
  The identity $\eqref{NambuIdentity}$ is called \emph{Hom-Nambu identity}.
  \end{definition}
  \begin{definition}
An \emph{$n$-ary Hom-Nambu} superalgebra is a triple $(N, [\cdot ,..., \cdot],  \widetilde{\alpha} )$ consisting of a vector space  $N=N_0\oplus N_1$, an even
$n$-linear map $[\cdot ,..., \cdot ] :  N^{ n}\longrightarrow N$ such that $[N_{j_1} ,..., N_{j_{n}} ]\subset N_{j_1+...+j_{n}}$ and a family
$\widetilde{\alpha}=(\alpha_i)_{1\leq i\leq n-1}$ of even linear maps $ \alpha_i:\ \ N\longrightarrow N$, satisfying \\
  \begin{eqnarray}\label{SNambuIdentity}
  && \big[\alpha_1(x_1),....,\alpha_{n-1}(x_{n-1}),[y_1,....,y_{n}]\big]= \\ \nonumber
&& \sum_{i=1}^{n}(-1)^{|y||x^i|}\big[\alpha_1(y_1),....,\alpha_{i-1}(y_{i-1}),[x_1,....,x_{n-1},y_i]
  ,\alpha_i(y_{i+1}),...,\alpha_{n-1}(y_n)\big],
  \end{eqnarray}
  for all $(x_1,..., x_{n-1})\in \mathcal{H}(N)^{ n-1}$, $(y_1,...,  y_n)\in \mathcal{H}(N)^{ n},$ where $|x^i|=\sum\limits_{j=1}^{i-1}|x_j|$ and $|y|=\sum\limits_{j=1}^{n-1}|y_j|$\\
  The identity $\eqref{SNambuIdentity}$ is called \emph{Super-Hom-Nambu identity}.
  \end{definition}
Let
$x=(x_1,\ldots,x_{n-1})\in \mathcal{H}(N)^{n-1}$, $\widetilde{\alpha}
(x)=(\alpha_1(x_1),\ldots,\alpha_{n-1}(x_{n-1}))\in \mathcal{H}(N)^{n-1}$ and
$y\in \mathcal{H}(N)$. We define an adjoint map  $ad(x)$ as  a linear map on $N$,
such that
\begin{equation}\label{adjointMapNaire}
L_x(y)=[x_{1},\cdots,x_{n-1},y].
\end{equation}

Then the Super-Hom-Nambu identity \eqref{NambuIdentity} may be written in terms of adjoint map as
\begin{equation*}
L_{\widetilde{\alpha} (x)}( [y_1,...,y_n])=
\sum_{i=1}^{n}(-1)^{|y||x^i|}{[\alpha_1(y_1),...,\alpha_{i-1}(y_{i-1}),
L_x(y_{i}), \alpha_{i+1}(y_{i+1}) ...,\alpha_{n-1}(y_{n})].}
\end{equation*}

\begin{remark}
When the maps $(\alpha_i)_{1\leq i\leq n-1}$ are all identity maps, one recovers the classical $n$-ary Nambu superalgebras. The Super-Hom-Nambu Identity \eqref{NambuIdentity}, for $n=2$,  corresponds to Super-Hom-Jacobi identity (see \cite{MS}), which reduces to Super-Jacobi identity when $\alpha_1=id$.
\end{remark}

\begin{definition}
An $n$-ary Hom-Nambu superalgebra $(N, [\cdot ,..., \cdot],  \widetilde{ \alpha} )$ where  $\widetilde{\alpha}=(\alpha_i)_{1\leq i\leq n-1}$
is called \emph{$n$-ary Hom-Nambu-Lie} superalgebra if the bracket is super skew-symmetric that is
\begin{equation}
[x_1,...,x_i,....,x_j,...,x_n]=-(-1)^{|x_i||x_j|}[x_1,...,x_j,....,x_i,...,x_n],\ \ \forall\ i,j\in \{1,...,n\}
\ \ \textrm{and}\ \ \forall\ x_1,...,x_n\in \mathcal{H}(N)
\end{equation}
\end{definition}
In the sequel we deal with a particular class of $n$-ary Hom-Nambu-Lie superalgebras which we call $n$-ary multiplicative Hom-Nambu-Lie superalgebras.
\begin{definition}
An \emph{$n$-ary multiplicative Hom-Nambu superalgebra }
 is an $n$-ary Hom-Nambu superalgebra   $(N, [\cdot ,..., \cdot],  \widetilde{ \alpha})$ with  $\widetilde{\alpha}=(\alpha_i)_{1\leq i\leq n-1}$
where  $\alpha_1=...=\alpha_{n-1}=\alpha$  and satisfying
\begin{equation}
\alpha([x_1,..,x_n])=[\alpha(x_1),..,\alpha(x_n)],\ \  \forall\ x_1,...,x_n\in \mathcal{H}(N).
\end{equation}
For simplicity, we will denote the $n$-ary multiplicative Hom-Nambu superalgebra as $(N, [\cdot ,..., \cdot ],  \alpha)$ where $\alpha :N\rightarrow N$ is a linear map. Also by misuse of language an element  $x\in \mathcal{H}(N)^n$ refers  to $x=(x_1,..,x_{n})$, where $x_i\in \mathcal{H}(N)$,  and  $\alpha(x)$ denotes $(\alpha (x_1),...,\alpha (x_n))$.
\end{definition}

\subsection{From $n$-ary Hom-Nambu-Lie superalgebra to Hom-Leibniz superalgebra}

Let $(N,[\cdot ,...,\cdot ],\alpha)$ be an $n$-ary multiplicative Hom-Nambu-Lie  superalgebra.  On  $\wedge^{n-1}N$ which is the set of elements $x_1\wedge...\wedge x_{ n-1}$ that are skew-symmetric in their arguments,  we define, for $x=x_1\otimes...\otimes x_{ n-1}\in\otimes^{n-1}N$,
 a linear map $\hat{\alpha}
:\otimes^{n-1}N\longrightarrow\otimes^{n-1}N$ for all $x=x_1\otimes...\otimes x_{ n-1}\in\otimes^{n-1}N$, by \begin{equation}\label{mapLeibniz}\hat{\alpha}
(x)=\alpha(x_1)\otimes...\otimes\alpha(x_{n-1})\,\end{equation}
and an even  bilinear map $[\ ,\ ]_{\alpha}:\wedge^{n-1}N\times\wedge^{n-1}N\longrightarrow\wedge^{n-1}N$  defined for all $ x,y\in \otimes^{n-1}\mathcal{H}(N)$ by
\begin{equation}\label{brackLei}[x ,y]_{\alpha}=L(x)\bullet_{\alpha}y=\sum_{i=0}^{n-1}(-1)^{|x||y^i|}\big(\alpha(y_1),...,L(x)\cdot y_i,...,\alpha(y_{n-1})\big).\end{equation}

We denote by $\mathcal{L}(N)$ the space $\wedge^{n-1}N$ and  call it  the fundamental set, $\mathcal{L}(N)$ is $\mathbb{Z}_2$-graded.
\begin{lem}\label{3.1}
The map $L$ satisfies
\begin{equation} L([x ,y ]_{\alpha})\cdot \alpha(z)=L(\alpha(x))\cdot \big(L(y)\cdot z\big)-(-1)^{|x||y|}L(\alpha(y))\cdot \big(L(x)\cdot z\big)\end{equation}
for all $x,\ y\in \mathcal{H}(\mathcal{L}(N)),\ z\in \mathcal{H}(N).$
\end{lem}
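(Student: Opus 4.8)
The plan is to reduce the whole statement to a single application of the Super-Hom-Nambu identity \eqref{SNambuIdentity}. Write $x=(x_1,\dots,x_{n-1})$, $y=(y_1,\dots,y_{n-1})$, and introduce the auxiliary $n$-tuple $(y_1,\dots,y_{n-1},z)$ by setting $y_n:=z$. Using \eqref{adjointMapNaire} together with the componentwise convention $\alpha(x)=(\alpha(x_1),\dots,\alpha(x_{n-1}))$, the first term on the right-hand side unfolds as
$$L(\alpha(x))\cdot\big(L(y)\cdot z\big)=\big[\alpha(x_1),\dots,\alpha(x_{n-1}),[y_1,\dots,y_{n-1},z]\big],$$
which is exactly the left-hand side of \eqref{SNambuIdentity} with acting tuple $x$ and acted-upon tuple $(y_1,\dots,y_{n-1},z)$.

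First I would apply \eqref{SNambuIdentity} to this expression and split the resulting sum $\sum_{i=1}^{n}$ according to the position of the inner bracket $[x_1,\dots,x_{n-1},y_i]$. The term $i=n$ is the one in which the bracket acts on $z$; by \eqref{adjointMapNaire} it equals $\big[\alpha(y_1),\dots,\alpha(y_{n-1}),[x_1,\dots,x_{n-1},z]\big]=L(\alpha(y))\cdot\big(L(x)\cdot z\big)$, and I would check that the sign it carries is precisely $(-1)^{|x||y|}$, the $x$-tuple having been transposed past $y_1,\dots,y_{n-1}$. The remaining $n-1$ terms ($1\le i\le n-1$) are each of the form $\big[\alpha(y_1),\dots,[x_1,\dots,x_{n-1},y_i],\dots,\alpha(y_{n-1}),\alpha(z)\big]$. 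By the definition \eqref{brackLei}, $[x,y]_\alpha$ is exactly the sum of the fundamental tuples $(\alpha(y_1),\dots,L(x)\cdot y_i,\dots,\alpha(y_{n-1}))$, so evaluating $L([x,y]_\alpha)$ on $\alpha(z)$ via \eqref{adjointMapNaire} reproduces precisely these $n-1$ terms. Substituting and transposing the $i=n$ contribution across the equality then yields the claimed formula.

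The only substantial work is the sign bookkeeping. I must verify that, slot by slot, the Koszul sign $(-1)^{|x||y^i|}$ carried by the $i$-th summand of \eqref{brackLei} agrees with the sign produced by \eqref{SNambuIdentity} for the corresponding $i\le n-1$, and that the isolated $i=n$ term carries exactly $(-1)^{|x||y|}$, so as to match the second term of the statement. Here one uses that $\alpha$ is even, hence $|\alpha(y_j)|=|y_j|$, so that passing to the twisted arguments does not change any parity. This reconciliation of the two sign conventions is where I expect the main difficulty to lie; no idea beyond \eqref{SNambuIdentity} itself is needed.
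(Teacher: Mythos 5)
Your proof is correct and is exactly the intended argument: the paper states Lemma \ref{3.1} without proof precisely because it is the Super-Hom-Nambu identity \eqref{SNambuIdentity} rewritten with $y_n=z$ --- the $i=n$ term yields $(-1)^{|x||y|}L(\alpha(y))\cdot\big(L(x)\cdot z\big)$, while the terms $1\le i\le n-1$ assemble, via the definition \eqref{brackLei} and the linear extension of $L$ to sums of decomposable tuples, into $L([x,y]_\alpha)\cdot\alpha(z)$, and Proposition \ref{HomLeibOfHomNambu} then invokes the lemma in just this form. One caveat on your sign bookkeeping: the exponent printed in \eqref{SNambuIdentity}, namely $(-1)^{|y||x^i|}$, is a misprint for the Koszul sign $(-1)^{|x||y^i|}$ with $|y^i|=\sum_{j=1}^{i-1}|y_j|$ (the convention actually used in \eqref{brackLei}), so your slot-by-slot reconciliation should be carried out against the corrected sign, under which all $n$ signs match exactly as you predict, using as you note that $\alpha$ is even.
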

\begin{proposition}\label{HomLeibOfHomNambu}The triple $\big(\mathcal{L}(N),\ [\ ,\ ]_{\alpha},\ \widetilde{\alpha}\big)$ is a Hom-Leibniz superalgebra.
\end{proposition}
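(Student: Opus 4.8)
The plan is to check the three requirements in the definition of a Hom-Leibniz superalgebra for $\big(\mathcal{L}(N),[\cdot,\cdot]_\alpha,\widetilde{\alpha}\big)$, where $\widetilde{\alpha}$ is the twist map $\hat{\alpha}$ of \eqref{mapLeibniz}: that $\widetilde{\alpha}$ is an even linear map, that $[\cdot,\cdot]_\alpha$ is an even bilinear bracket respecting the $\mathbb{Z}_2$-grading of $\mathcal{L}(N)$, and the Super-Hom-Leibniz identity
\[
[[x,y]_\alpha,\widetilde{\alpha}(z)]_\alpha=[\widetilde{\alpha}(x),[y,z]_\alpha]_\alpha-(-1)^{|x||y|}[\widetilde{\alpha}(y),[x,z]_\alpha]_\alpha,\qquad x,y,z\in\mathcal{H}(\mathcal{L}(N)).
\]
The first two are immediate: $\widetilde{\alpha}$ applies the even map $\alpha$ factorwise, hence preserves degree, and $[\cdot,\cdot]_\alpha$ was built as an even bilinear map in \eqref{brackLei}. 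So the whole content is the identity.

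To prove it, write $z=z_1\wedge\cdots\wedge z_{n-1}$ and expand every bracket via $[u,v]_\alpha=L(u)\bullet_\alpha v$, the twisted derivation action that replaces a single factor $v_i$ of $v$ by $L(u)\cdot v_i$ and applies $\alpha$ to the remaining factors. The left-hand side then becomes the sum over slots $i$ of $\alpha^2(z_1)\wedge\cdots\wedge L([x,y]_\alpha)\cdot\alpha(z_i)\wedge\cdots\wedge\alpha^2(z_{n-1})$, up to sign. On the right-hand side, expanding the inner and then the outer bracket produces two families of terms, according to whether the outer operator acts on the same factor of $z$ that the inner one acted on, or on a different factor. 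The \textbf{diagonal} terms (both acting on the same $z_i$) combine, in each slot $i$, into $L(\widetilde{\alpha}(x))\cdot\big(L(y)\cdot z_i\big)-(-1)^{|x||y|}L(\widetilde{\alpha}(y))\cdot\big(L(x)\cdot z_i\big)$, which by Lemma \ref{3.1} equals exactly $L([x,y]_\alpha)\cdot\alpha(z_i)$. Hence the diagonal part of the right-hand side reproduces the left-hand side slot by slot.

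It remains to show that the \textbf{off-diagonal} terms, where the outer and inner operators hit two distinct factors $z_i,z_j$, cancel. This is where multiplicativity of $N$ is used: $\alpha([y_1,\ldots,y_{n-1},z_i])=[\alpha(y_1),\ldots,\alpha(y_{n-1}),\alpha(z_i)]$ says $\alpha\circ L(y)=L(\widetilde{\alpha}(y))\circ\alpha$, which lets one rewrite each off-diagonal factor so that the operator sitting in it always has the form $L(\widetilde{\alpha}(\cdot))\circ\alpha$. One then pairs the term of $[\widetilde{\alpha}(x),[y,z]_\alpha]_\alpha$ in which $L(y)$ hits slot $i$ and $L(\widetilde{\alpha}(x))$ hits slot $j$ with the term of $-(-1)^{|x||y|}[\widetilde{\alpha}(y),[x,z]_\alpha]_\alpha$ obtained by swapping the two slots; after the rewriting the wedge components of the paired terms coincide. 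Summing over the unordered pairs $\{i,j\}$ then yields total cancellation, and the identity follows. The main obstacle is precisely the sign bookkeeping in this last step: one must track the supercommutation factors $(-1)^{|x||y|}$ together with the Koszul signs generated by moving $L(x)$ and $L(y)$ past the intervening factors $z_k$, and verify that in each paired contribution these combine to opposite signs. This is routine but delicate, and is the one place the argument can go wrong.
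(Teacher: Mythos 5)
Your proposal is correct and takes essentially the same route as the paper's proof: expand both sides in the operator form \eqref{brackLei3}, reduce the diagonal terms slot-by-slot to Lemma \ref{3.1}, and cancel the off-diagonal terms using multiplicativity (in the form $\alpha\circ L(x)=L(\widetilde{\alpha}(x))\circ\alpha$) together with symmetry in $x,y$ under the skew combination. The sign bookkeeping you flag as the delicate point is precisely what the paper records in its three-family expansion of $L(\alpha(x))\bullet_{\alpha}\big(L(y)\bullet_{\alpha}u\big)$, where the $j<i$ and $j>i$ families carry signs differing exactly by $(-1)^{|x||y|}$, so the pairing you describe does yield total cancellation.
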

\begin{proof} Let $x=x_1\wedge...\wedge x_{ n-1},\ y=y_1\wedge...\wedge y_{n-1} $ and $u=u_1\wedge...\wedge u_{n-1}\in \mathcal{H}(\mathcal{L}(N))$, the Super-Leibniz identity \eqref{super-LeiIdent} can be written
\begin{equation}\label{super-LeiIdent2}\big[[x ,y ]_{\alpha} ,\widetilde{\alpha}(u)\big]_{\alpha}=[\widetilde{\alpha}(x) ,[y ,u ]_{\alpha} \big]_{\alpha}-(-1)^{|x||y|}[\widetilde{\alpha}(y) ,[x ,u ]_{\alpha}\big]_{\alpha}\end{equation}
and  equivalently for $v\in \mathcal{H}(N)$
\begin{equation}\label{brackLei3}
\Big(L\big(L(x)\bullet_{\alpha}y\big)\bullet_{\alpha}\tilde{\alpha}(u)\Big)\cdot (v)
=\Big(L(\alpha(x))\bullet_{\alpha}\big(L(y)\bullet_{\alpha}u\big)\Big)\cdot (v)-(-1)^{|x||y|}
\Big(L(\alpha(y))\bullet_{\alpha}\big(L(x)\bullet_{\alpha}u\big)\Big)\cdot (v).
\end{equation}
Let us compute first $\Big(L(\tilde{\alpha}(x))\bullet_{\alpha}\big(L(y)\bullet_{\alpha}u\big)\Big)$. This is given by
\begin{align*}
\Big(L(\alpha(x))\bullet_{\alpha}\big(L(y)\bullet_{\alpha}u\big)\Big) &= \sum_{i=1}^{n-1}(-1)^{|y||u^i|}L(\alpha(x))\bullet_{\alpha}\big(\alpha(u_1),...,L(y)\cdot u_i,...,\alpha(u_{n-1})\big) \\
&= \sum_{i=1}^{n-1}\sum_{j<i,j=1}^{n-1}(-1)^{|x||u^j|+|y||u^i|}\big(\alpha^2(u_1),...,\alpha(L(x)\cdot u_j),...,\alpha(L(y)\cdot u_i)...,\alpha^2(u_{n-1})\big) \\
&+\sum_{i=}^{n-1}\sum_{j>i,j=1}^{n-1}(-1)^{|x||y|+|x||u^j|+|y||u^i|}\big(\alpha^2(u_1),...,\alpha(L(y)\cdot u_i)...,\alpha(L(x)\cdot u_j),...,\alpha^2(u_{n-1})\big) \\
&+ \sum_{i=1}^{n-1}(-1)^{|x||u^i|+|y||u^i|}\big(\alpha^2(u_1),...,L(\tilde{\alpha}(x))\cdot (L(y)\cdot u_i),...,\alpha^2(u_{n-1})\big).
\end{align*}
                                The right hand side of \eqref{brackLei3} is skew-symmetric in $x$, $y$. Hence,
                                $$
    \Big(L(\alpha(x))\bullet_{\alpha}\big(L(y)\bullet_{\alpha}u\big)\Big)-(-1)^{|x||y|}
\Big(L(\alpha(y))\bullet_{\alpha}\big(L(x)\bullet_{\alpha}u\big)\Big)=$$\begin{equation}
\sum_{i=1}^{n-1} (-1)^{|x||u^i|+|y||u^i|} (\alpha^2(u_1),...,\{L(\alpha(x))\cdot (L(y)\cdot u_i)
  - (-1)^{|x||y|}L(\alpha(y))\cdot (L(x)\cdot u_i)\},...,\alpha^2(u_{n-1})\big).
\end{equation}
In the other hand, using Definition \eqref{brackLei}, we find
\begin{align*}
&\Big(L\big(L(x)\bullet_{\alpha}y\big)\bullet_{\alpha}\tilde{\alpha}(u)\Big)
\\&= \sum_{i=1}^{n-1}\sum_{j=1}^{n-1}(-1)^{|x||y^j|+|x||u^i|+|y||u^i|}\big(\alpha^2(u_1),...,\alpha^2(u_{i-1}),\\ & \quad \quad \quad [\alpha(y_1),...,
L(x)\cdot y_j,...,\alpha(y_{n-1}),\alpha(u_i)],\alpha^2(u_{i+1}),...,\alpha^2(u_{n-1})\big)\\
&=\sum_{i=0}^{n-1}(-1)^{|x||u^i|+|y||u^i|}\big(\alpha^2(u_1),...,\alpha^2(u_{i-1}),[x ,y ]_{\alpha}\cdot \alpha(u_i),\alpha^2(u_{i+1}),...,\alpha^2(u_{n-1})\big).\end{align*}
The identity \eqref{super-LeiIdent2} holds by using Lemma \ref{3.1}.
\end{proof}
\begin{remark}
We obtain a similar result if we consider the space $TN=\otimes^{n-1} N$ instead of $\mathcal{L}(N)$.
\end{remark}

\subsection{Representations of Hom-Nambu-Lie superalgebras}
We provide in the following a graded version of the study of representations of $n$-ary Hom-Nambu-Lie
algebras stated in \cite{AmmarSamiMakhloufNov2010}.
\begin{definition}
A representation of an $n$-ary Hom-Nambu  superalgebra $(N,[\cdot ,...,\cdot ],\tilde{\alpha})$
on a $\mathbb{Z}_2$-graded vector space $V=V_0\oplus V_1$ is an even super skew-symmetric multilinear map $\rho:N^{ n-1}\longrightarrow End(V)$
satisfying  for $x,y\in \mathcal{H}(N^{n-1})$ the identity
\begin{equation}\label{RepIdentity1}
\rho(\tilde{\alpha}(x))\circ\rho(y)-(-1)^{|x||y|}\rho(\tilde{\alpha}(y))\circ\rho(x)=\sum_{i=1}^{n-1}(-1)^{|y||x^i|}\rho(\alpha_1 (x_1),...,L(y)\cdot x_i,...,\alpha_{n-2} (x_{n-1}))\circ \nu,
\end{equation}
where $\nu$ is  an even endomorphism on $V$.
We denote this representation by a triple $(V,\rho,\nu)$.
\end{definition}

Two representations $(V,\rho,\nu )$ and $(V',\rho',\nu' )$ of $N$ are \emph{equivalent} if there exists $f:V \rightarrow V' $, an isomorphism of vector space, such that $f(x\cdot v)=x\cdot ' f(v)$ and $f\circ \nu =\nu' \circ f$ where $x\cdot v=\rho(x)(v)$ and $x\cdot' v'=\rho'(x)(v')$ for $x\in \mathcal{H} (N^{n-1})$, $v\in V$ and $v'\in V'$. Then $V $ and $V'$ are viewed as  $N^{n-1}$-modules.
\begin{remark}
Let $(N,[\cdot ,...,\cdot ],\alpha)$ be a multiplicative  $n$-ary Hom-Nambu  superalgebra. The identity \eqref{RepIdentity1} can be written
\begin{equation}\label{RepIdentity2}
\rho(\tilde{\alpha}(x))\circ\rho(y)-(-1)^{|x||y|}\rho(\tilde{\alpha}(y))\circ\rho(x)=\rho([x,y]_\alpha)\circ \nu.
\end{equation}
\end{remark}
\begin{example}Let $(N, [\cdot  ,..., \cdot ],  \tilde{\alpha} )$ be an $n$-ary Hom-Nambu-Lie superalgebra. The map $ L$  defined in \eqref{adjointMapNaire} is a representation on $N$,  where the endomorphism  $\nu$ is the twist map $\alpha_{n-1}$. The identity \eqref{RepIdentity1} is equivalent to the Hom-Nambu identity \eqref{NambuIdentity}. This representation is called the \emph{adjoint} representation.
\end{example}

\subsection{Cohomology of Hom-Nambu-Lie superalgebras }

  Let $(N,[\cdot ,...,\cdot ],\alpha)$ be an $n$-ary  multiplicative Hom-Nambu-Lie superalgebra ($\alpha_1=...=\alpha_n=\alpha$) and $(V,\rho,\nu)$  a $N^{n-1}$-module.\\

  \begin{definition}
A $p$-cochain is a $(p+1)$-linear map
 $
  \varphi:\mathcal{L}(N)\otimes...\otimes \mathcal{L}(N)\wedge N \longrightarrow V $,
such that $$\nu\circ\varphi(x_1,...,x_p,z)=\varphi(\alpha(x_1),...,\alpha(x_p),\alpha(z)).$$
We denote the set of  $p$-cochains by $\mathcal{C}^p_{\alpha,\mu}(N,V)$
\end{definition}
  \begin{definition}We call, for $p\geq 1$, $p$-coboundary operator of the multiplicative $n$-ary Hom-Nambu-Lie superalgebra $(N,[\cdot ,...,\cdot ],\alpha)$ the linear map $\delta^p:\mathcal{C}^p_{\alpha,\mu}(N,V)\rightarrow \mathcal{C}^{p+1}_{\alpha,\mu}(N,V)$ defined by

  \begin{align}
                                    &\label{Nambucohomo}\delta^p\psi(x_1,...,x_p,x_{p+1},z) \\&=\sum_{1\leq i< j}^{p+1}(-1)^{i+(|x_{i+1}|+...+|x_{j-1}|)|x_i|}\psi\big(\alpha(x_1),...,\widehat{\alpha(x_i)},...,
                                  \alpha(x_{j-1}),[x_i,x_j]_{\alpha},...,\alpha(x_{p+1}),\alpha(z)\big)\nonumber\\
                                    &+ \sum_{i=1}^{p+1}(-1)^{i+(|x_{i+1}|+...+|x_{p+1}|)|x_i|}\psi\big(\alpha(x_1),...,\widehat{\alpha(x_i)},...,
                                    \alpha(x_{p+1}),L(x_i)\cdot z\big) \nonumber\\
                                    &+ \sum_{i=1}^{p+1}(-1)^{i+1+(|\psi|+|x_{1}|+...+|x_{i-1}|)|x_i|}\rho(\alpha^p(x_i))\Big( \psi\big(x_1,...,\widehat{x_i},...,
                                  x_{p+1},z\big)\Big)\nonumber\\
                                  &+(-1)^{(|\psi|+|x_{1}|+...+|x_{p+1}|)|z|} \big(\psi(x_1,...,x_p,\ )\cdot x_{p+1}\big)\bullet_{\alpha}\alpha^p(z)\nonumber
                                \end{align}
where
  \begin{align}&\big(\psi(x_1,...,x_p,\ )\cdot x_{p+1}\big)\bullet_{\alpha}\alpha^p(z)\nonumber\\
  &=\dl\sum_{i=1}^{p-1}(-1)^{i+|\psi|+|x_1|+...+|x_p|+|x_{p+1}^i|+|z| }\rho(\alpha^p(x_{p+1}^1),...,\alpha^p(x_{p+1}^{n-1}),\alpha^p(z))(\psi(x_1,...,x_p,x_{p+1}^i )),\end{align}

  for $x_i=(x_i^j)_{1\leq j\leq n-1}\in \mathcal{H}(\mathcal{L}(N)),\ 1\leq i\leq p+1$, $z\in \mathcal{H}(N)$.
  \end{definition}

  \begin{proposition}\label{opercob}Let $\psi\in \mathcal{C}^p_{\alpha,\mu}(N,V)$ be a $p$-cochain then
  $$\delta^{p+1}\circ\delta^p(\psi)=0.$$
  \end{proposition}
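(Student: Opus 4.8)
The plan is to prove $\delta^{p+1}\circ\delta^p = 0$ by reducing the claim to the corresponding cochain identity for the associated Hom-Leibniz superalgebra $\mathcal{L}(N)$, where the analogous statement $\delta^{n+1}\circ\delta^n=0$ has already been established in Theorem~2.4. Recall from Proposition~\ref{HomLeibOfHomNambu} that $\big(\mathcal{L}(N),[\cdot,\cdot]_\alpha,\widetilde{\alpha}\big)$ is a Hom-Leibniz superalgebra, and that the adjoint map $L$ furnishes a representation of $N$ with twist $\nu$. The coboundary $\delta^p$ defined in \eqref{Nambucohomo} is, structurally, the Hom-Leibniz coboundary of \eqref{Leibnizcohomo} applied to the fundamental objects $x_i\in\mathcal{L}(N)$, but with a trailing slot occupied by an element $z\in N$ rather than $\mathcal{L}(N)$; the final two sums encode how this last argument is acted upon through $L(x_i)\cdot z$ and through the bullet action $\bullet_\alpha$. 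The strategy is therefore to identify $\delta^p$ with (a restriction of) the Hom-Leibniz differential and transport the known vanishing.

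First I would expand $\delta^{p+1}\circ\delta^p(\psi)$ directly and sort the resulting terms according to the type of operation applied to the pair of fundamental arguments that get combined: terms where two $x_i,x_j$ are bracketed via $[\cdot,\cdot]_\alpha$ at both stages, terms where one bracketing and one adjoint action $L(x_i)\cdot z$ occur, terms with a double action on $z$, and the "$\rho$-prefactor" terms coming from the third sum of \eqref{Nambucohomo}. The group of terms involving only the $\mathcal{L}(N)$-arguments cancels exactly as in Theorem~2.4, using the Super-Hom-Leibniz identity \eqref{super-LeiIdent} for $[\cdot,\cdot]_\alpha$ (equivalently \eqref{super-LeiIdent2} from Proposition~\ref{HomLeibOfHomNambu}) together with the compatibility $\nu\circ\psi=\psi\circ\alpha^{\otimes}$ built into the definition of $\mathcal{C}^p_{\alpha,\mu}$. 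The terms that additionally involve the last slot $z$ must cancel by invoking the representation axiom \eqref{RepIdentity2}, namely
\begin{equation*}
\rho(\widetilde{\alpha}(x))\circ\rho(y)-(-1)^{|x||y|}\rho(\widetilde{\alpha}(y))\circ\rho(x)=\rho([x,y]_\alpha)\circ\nu,
\end{equation*}
which is precisely the $n$-ary analogue of \eqref{eg3-rep} needed to pair the second-stage $\rho$-prefactors against the first-stage bracketings.

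The genuine bookkeeping burden — and the step I expect to be the main obstacle — is verifying that the Koszul signs match across the two applications of $\delta$. Because the last argument $z$ lives in $N$ while the preceding arguments live in $\mathcal{L}(N)$, and because the parity prefactors in \eqref{Nambucohomo} mix the exponents $|x_i|(|x_{i+1}|+\dots+|x_{p+1}|)$ with the $\rho$-degree $|\psi|$ and with $|z|$, each cancellation requires that a pair of terms generated in different orders (bracket-then-differentiate versus differentiate-then-bracket) carry opposite signs after reindexing $i<j$. I would organize this by introducing, for each ordered pair $(i,j)$, the canonical sign of \eqref{Leibnizcohomo} and tracking how the insertion of $[x_i,x_j]_\alpha$ shifts the running parity of the later slots; the trailing $\bullet_\alpha$-action terms require separate but parallel sign accounting, controlled by the expansion of $\bullet_\alpha$ given in the displayed formula and by \eqref{brackLei}.

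Concretely, I would present the proof by writing $\delta^{p+1}\circ\delta^p(\psi)$ as a sum of grouped blocks, asserting that the purely-$\mathcal{L}(N)$ block vanishes verbatim by the computation of Theorem~2.4 (with $[\cdot,\cdot]_\alpha$ in place of $[\cdot,\cdot]$ and $L$ in place of the module action), that the mixed blocks vanish by \eqref{RepIdentity2} and Lemma~\ref{3.1}, and that the pure-$z$ block vanishes by the Super-Hom-Nambu identity \eqref{SNambuIdentity} rewritten through the adjoint map. Since the routine sign verification is the same kind of computation carried out explicitly in the $n=1$ and general cases of Theorem~2.4, I would state the analogy, exhibit the representative cancelling pairs, and conclude that $\delta^{p+1}\circ\delta^p(\psi)=0$, which proves the proposition.
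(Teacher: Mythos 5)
Your inventory of ingredients is the right one, and a direct expansion of $\delta^{p+1}\circ\delta^{p}$ organized into the four families of terms is indeed what the paper intends: its own proof of Proposition \ref{opercob} is elided, but the decomposition $\delta=\delta_1+\delta_2+\delta_3+\delta_4$ invoked later in the proof of Theorem \ref{constakh} is attributed precisely to that proof. The genuine gap is in the step you lean on hardest, the assertion that the purely-$\mathcal{L}(N)$ block ``vanishes verbatim by the computation'' of the Hom-Leibniz theorem $\delta^{n+1}\circ\delta^{n}=0$. A cochain in $\mathcal{C}^{p}_{\alpha,\mu}(N,V)$ is defined on $\mathcal{L}(N)^{\otimes p}\otimes N$, not on $\mathcal{L}(N)^{\otimes(p+1)}$, and \eqref{Nambucohomo} is not a restriction of \eqref{Leibnizcohomo}: the right-action terms $\bigl[\varphi(x_1,\dots,\widehat{x_i},\dots),\alpha^{n-1}(x_i)\bigr]$ of \eqref{Leibnizcohomo} have no counterpart here, while \eqref{Nambucohomo} carries two families with no Leibniz analogue, namely $\delta_2$ (the terms $L(x_i)\cdot z$ in the last slot) and $\delta_4$ (the $\bullet_\alpha$-term, expanded through \eqref{brackLei}). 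Moreover, the proof of the Hom-Leibniz theorem consumes the bimodule axioms \eqref{eg3-rep}--\eqref{eg5-rep}, whereas an $n$-ary representation $(V,\rho,\nu)$ supplies via \eqref{RepIdentity1}/\eqref{RepIdentity2} only the analogue of \eqref{eg3-rep}; nothing plays the role of \eqref{eg4-rep} and \eqref{eg5-rep}. Hence the terms of $\delta^{p+1}\circ\delta^{p}(\psi)$ do not split into a block already killed by the Leibniz theorem plus a remainder: every cancellation involving $\delta_2$ or $\delta_4$ --- exactly where the $z$-slot, the expansion of $\bullet_\alpha$, and the Koszul signs you yourself flag as the main obstacle all live --- must be computed outright from Lemma \ref{3.1} and \eqref{RepIdentity2}, and these are precisely the computations your outline defers.

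The transport idea with which you open also cannot serve as a fallback for this proposition. The paper does implement it, but only in Theorem \ref{constakh}: the intertwiner $\Delta$ satisfying $d\circ\Delta=\Delta\circ\delta$ requires the cochains to take values in $N$ itself, since the value of $\varphi$ is re-inserted into a tensor slot of $\mathcal{L}(N)$, so it exists only for the adjoint complex $\mathcal{C}^{\bullet}_{\alpha,\alpha}(N,N)$ and not for general coefficients $(V,\rho,\nu)$ as in the statement; and even in the adjoint case, deducing $\delta^{2}=0$ from $\Delta\circ\delta^{2}=d^{2}\circ\Delta=0$ needs injectivity of $\Delta$, which you (like the paper's closing remark) do not address. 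To turn your sketch into a proof, write $\delta=\delta_1+\delta_2+\delta_3+\delta_4$ and check the pairwise products directly: the Super-Hom-Leibniz identity \eqref{super-LeiIdent2} handles $\delta_1$ against itself, Lemma \ref{3.1} the products involving $\delta_2$, the representation identity \eqref{RepIdentity2} those involving $\delta_3$, and the expansion \eqref{brackLei} those involving $\delta_4$, with the signs verified term by term rather than by analogy.
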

Therefore, we have a cohomology complex. Let $\psi$ be a $p$-cochain, $x_i=(x_i^j)_{1\leq j\leq n-1}\in \mathcal{H}(\mathcal{L}(N)),\ 1\leq i\leq p+2$ and $z\in \mathcal{H}(N)$.  
  \begin{definition}We define the space of
  \begin{itemize}
    \item
   $p$-cocycles  by
  $$\mathcal{Z}^p_{\alpha,\nu}(N,V)=\{\varphi\in \mathcal{C}^p_{\alpha,\nu}(N,V):\delta^p\varphi=0\},$$
   \item  $p$-coboundaries  by
  $$\mathcal{B}^p_{\alpha,\nu}(N,V)=\{\psi=\delta^{p-1}\varphi:\varphi\in \mathcal{C}^{p-1}_{\alpha,\nu}(N,V).$$\end{itemize}
  \end{definition}
  \begin{lem}$\mathcal{B}^p_{\alpha,\nu}(N,V)\subset \mathcal{Z}^p_{\alpha,\nu}(N,V)$.
  \end{lem}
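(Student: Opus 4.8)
The plan is to read this off from the cochain-complex property of Proposition \ref{opercob}, once we have checked that a coboundary is a bona fide compatible hom-cochain. First I would fix $\psi \in \mathcal{B}^p_{\alpha,\nu}(N,V)$; by definition there is some $\varphi \in \mathcal{C}^{p-1}_{\alpha,\nu}(N,V)$ with $\psi = \delta^{p-1}\varphi$. To conclude $\psi \in \mathcal{Z}^p_{\alpha,\nu}(N,V)$ two points must be settled: that $\psi$ genuinely belongs to $\mathcal{C}^p_{\alpha,\nu}(N,V)$, i.e. satisfies $\nu\circ\psi(x_1,\dots,x_p,z)=\psi(\alpha(x_1),\dots,\alpha(x_p),\alpha(z))$, and that $\delta^p\psi=0$.

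The first point is the only place where there is anything to do, and it is the analogue for $n$-ary Hom-Nambu--Lie superalgebras of the Hom-Leibniz lemma proved above (the identity $\delta^n(\varphi)\circ\alpha=\beta\circ\delta^n(\varphi)$). Concretely I would show that $\delta^{p-1}$ maps $\mathcal{C}^{p-1}_{\alpha,\nu}(N,V)$ into $\mathcal{C}^{p}_{\alpha,\nu}(N,V)$ by a term-by-term inspection of formula \eqref{Nambucohomo}: one applies $\nu$ to $\delta^{p-1}\varphi$ and commutes it past each bracket, past each adjoint $L(x_i)$, and past each $\rho(\alpha^{p-1}(x_i))$, using the multiplicativity $\alpha([\cdot,\dots,\cdot])=[\alpha(\cdot),\dots,\alpha(\cdot)]$, the compatibility $\nu\circ\varphi=\varphi\circ\alpha^{\otimes}$ of $\varphi$, and the representation identity \eqref{RepIdentity2}. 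Since none of the signs or index ranges are affected by this rewriting, the result is $\nu\circ(\delta^{p-1}\varphi)=(\delta^{p-1}\varphi)\circ\alpha^{\otimes}$, so $\psi$ is indeed a $p$-hom-cochain.

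Granting the first point, the second is immediate: Proposition \ref{opercob}, applied one level down (with $p-1$ in place of $p$), gives $\delta^p\circ\delta^{p-1}=0$, whence $\delta^p\psi=\delta^p(\delta^{p-1}\varphi)=0$ and therefore $\psi\in\mathcal{Z}^p_{\alpha,\nu}(N,V)$. This establishes $\mathcal{B}^p_{\alpha,\nu}(N,V)\subset\mathcal{Z}^p_{\alpha,\nu}(N,V)$. The only genuine obstacle is the compatibility verification of the first point; beyond the relation $\delta^2=0$ already supplied by Proposition \ref{opercob}, the inclusion carries no further content.
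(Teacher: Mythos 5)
Your proof is correct and is essentially the argument the paper intends: the lemma is stated there without proof precisely because it is the immediate consequence of Proposition \ref{opercob} (that $\delta^{p+1}\circ\delta^{p}=0$) applied with $p-1$ in place of $p$, exactly as you do. Your additional verification that $\delta^{p-1}$ preserves the compatibility condition $\nu\circ\psi=\psi\circ(\alpha,\dots,\alpha)$ is the point the paper silently builds into its definition of the coboundary map as a map $\mathcal{C}^{p-1}_{\alpha,\nu}(N,V)\rightarrow\mathcal{C}^{p}_{\alpha,\nu}(N,V)$, mirroring the Hom-Leibniz lemma $\delta^{n}(\varphi)\circ\alpha=\beta\circ\delta^{n}(\varphi)$ proved earlier, so it is a sound, if unstated, part of the same argument rather than a departure from it.
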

  \begin{definition}We call the $p^{\textrm{th}}$-cohomology group  the quotient
  $$\mathcal{H}^p_{\alpha,\nu}(N,V)=\mathcal{Z}^p_{\alpha,\nu}(N,V)/\mathcal{B}^p_{\alpha,\nu}(N,V).$$
  \end{definition}

\subsection{Cohomology of $n$-ary Hom-superalgebras induced by cohomology of Hom-Leibniz superalgebras}
In this section we extend to $n$-ary multiplicative Hom-Nambu-Lie superalgebras the Takhtajan's construction of a cohomology of ternary Nambu-Lie algebras starting from Chevalley-Eilenberg cohomology of  Lie algebras, (see \cite{Takhtajan0,Takhtajan1,Takhtajan2}). \\

Let $(N,[\cdot ,...,\cdot ],\alpha)$ be a multiplicative $n$-ary Hom-Nambu-Lie superalgebra and the triple $(\mathcal{L}(N)=N^{\otimes n-1},[\cdot,\cdot]_\alpha,\alpha)$  be the Hom-Leibniz superalgebra associated to $N$ where the bracket is defined in \eqref{brackLei}.

\begin{thm}\label{constakh}Let $(N,[\cdot ,...,\cdot ],\alpha)$ be a multiplicative $n$-ary Hom-Nambu-Lie superalgebra and $\mathcal{C}^p_{\alpha,\alpha}(N,N)=Hom(\otimes^{p}\mathcal{L}(N)\otimes N,N)$ for $p\geq 1$ be the set of cochains. Let
$\Delta :\mathcal{C}^p_{\alpha,\alpha}(N,N)\rightarrow C^p_{\widetilde{\alpha},\widetilde{\alpha}}(\mathcal{L}(N),\mathcal{L}(N))$ be the linear map defined for $p=0$ and $(x_1,...,x_{n-1})\in \mathcal{H}(N)^{n-1}$ by \begin{eqnarray}\Delta\varphi(x_1\otimes\cdots\otimes x_{n-1})=\dl\sum_{i=0}^{n-1}(-1)^{(|x_{1}|+...+|x_{i-1}|)|\varphi|}x_1\otimes\cdots\otimes\varphi(x_i)\otimes\cdots\otimes x_{n-1}\end{eqnarray} and for $p>0$ by
\begin{align}
&\label{defdelta}(\Delta\varphi )(a_1,\cdots ,a_{p+1})=\\ & \nonumber  \sum_{i=1}^{n-1}(-1)^{(|\varphi|+|a_{1}|+...+|a_{p}|)|a_{p+1}^{i-1}|}\alpha^{p-1}(x_{p+1}^1)\otimes\cdots\otimes\varphi(a_1,\cdots ,a_{n-1}\otimes x_{p+1}^i)\otimes\cdots\otimes \alpha^{n-1}(x_{p+1}^{n-1}),\nonumber
\end{align}
where we set $a_j=x_{j}^1\otimes\cdots\otimes x_j^{n-1}.$

Then there exists a cohomology complex $(\mathcal{C}_{\alpha,\alpha}^\bullet(N,N),\delta )$ for $n$-ary Hom-Nambu-Lie superalgebras such that $$d\circ \Delta =\Delta\circ \delta.$$

The coboundary map $\delta: \mathcal{C}^p_N(N,N)\rightarrow \mathcal{C}^{p+1}_N(N,N)$ is defined for $\varphi\in \mathcal{C}^p_N(N,N)$ by
\begin{align*}
                                    &\delta^p\psi(a_1,...,a_p,a_{p+1},z) \\ &=\sum_{1\leq i< j}^{p+1}(-1)^i(-1)^{(|a_{i+1}|+...+|a_{j-1}|)|a_i|}\psi\big(\alpha(a_1),...,\widehat{\alpha(a_i)},...,
                                  \alpha(a_{j-1}),[a_i,a_j]_{\alpha},...,\alpha(a_{p+1}),\alpha(z)\big)\nonumber\\
                                    &+ \sum_{i=1}^{p+1}(-1)^i(-1)^{(|a_{i+1}|+...+|a_{p+1}|)|a_i|}\psi\big(\alpha(a_1),...,\widehat{\alpha(a_i)},...,
                                    \alpha(a_{p+1}),L(a_i)\cdot z\big) \nonumber\\
                                    &+ \sum_{i=1}^{p+1}(-1)^{i+1}(-1)^{(|\psi|+|a_{1}|+...+|a_{i-1}|)|a_i|}L(\alpha^p(a_i))\cdot \psi\big(a_1,...,\widehat{a_i},...,
                                  a_{p+1},z\big)\nonumber\\
                                  &+(-1)^{(|\psi|+|a_{1}|+...+|a_{p+1}|)|z|} (-1)^p\big(\psi(a_1,...,a_p,\ )\cdot a_{p+1}\big)\bullet_{\alpha}\alpha^p(z)\nonumber
                                \end{align*}
where
  $$\big(\varphi(a_1,...,a_p,\ )\cdot a_{p+1}\big)\bullet_{\alpha}\alpha^p(z)=\dl\sum_{i=1}^{n-1}[\alpha^p(x_{p+1}^1),...,\varphi(a_1,...,a_p,x_{p+1}^i ),...,\alpha^p(x_{p+1}^{n-1}),\alpha^p(z)].$$\\for  $a_i\in \mathcal{H}(\mathcal{L}(N))$, $x\in \mathcal{H}( N)$.
\end{thm}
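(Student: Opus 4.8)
The strategy is to realize $\Delta$ as a morphism of cochain complexes from $\bigl(\mathcal{C}^\bullet_{\alpha,\alpha}(N,N),\delta\bigr)$ to $\bigl(C^\bullet_{\widetilde{\alpha},\widetilde{\alpha}}(\mathcal{L}(N),\mathcal{L}(N)),d\bigr)$, where $d$ is the Hom-Leibniz coboundary \eqref{Leibnizcohomo} of the associated Hom-Leibniz superalgebra $\mathcal{L}(N)$ from Proposition \ref{HomLeibOfHomNambu}. The whole statement reduces to the single intertwining relation $d\circ\Delta=\Delta\circ\delta$: once it is available, applying it twice gives
$$\Delta\circ\delta^2=(d\circ\Delta)\circ\delta=d\circ(\Delta\circ\delta)=d\circ(d\circ\Delta)=d^2\circ\Delta=0,$$
where $d^2=0$ is the content of the Theorem of Section~2. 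Since $\Delta$ is injective (see below), this forces $\delta^2=0$, so that $\bigl(\mathcal{C}^\bullet_{\alpha,\alpha}(N,N),\delta\bigr)$ is genuinely a cohomology complex; alternatively $\delta^2=0$ is Proposition \ref{opercob} specialized to the adjoint representation $L$ with $\nu=\alpha$.

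First I would check that $\Delta$ is well defined, i.e. that it maps $\mathcal{C}^p_{\alpha,\alpha}(N,N)$ into $C^p_{\widetilde{\alpha},\widetilde{\alpha}}(\mathcal{L}(N),\mathcal{L}(N))$. Because $\alpha$ is multiplicative, $\widetilde{\alpha}$ commutes with the fundamental bracket and with $\bullet_\alpha$, and combining this with the cochain compatibility $\alpha\circ\varphi=\varphi\circ\alpha^{\otimes}$, a direct substitution into \eqref{defdelta} yields $\widetilde{\alpha}\circ\Delta\varphi=\Delta\varphi\circ\widetilde{\alpha}^{\otimes(p+1)}$, and $\Delta$ clearly preserves parity, $|\Delta\varphi|=|\varphi|$. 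Injectivity is recorded at the same time: from the diagonal insertion formula defining $\Delta\varphi$ one recovers $\varphi$ by projecting onto the tensor slot that carries the value of $\varphi$, so $\Delta\varphi=0$ implies $\varphi=0$.

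The core of the argument is the identity $d(\Delta\varphi)=\Delta(\delta\varphi)$ for every $\varphi\in\mathcal{C}^p_{\alpha,\alpha}(N,N)$. I would expand $d(\Delta\varphi)$ by the Hom-Leibniz coboundary formula \eqref{Leibnizcohomo}, and then expand every occurring Leibniz bracket by its definition \eqref{brackLei}, $[x,y]_\alpha=L(x)\bullet_\alpha y=\sum_i(-1)^{|x||y^i|}\bigl(\alpha(y_1),\dots,L(x)\cdot y_i,\dots,\alpha(y_{n-1})\bigr)$. The effect is that the two adjoint families in \eqref{Leibnizcohomo}, namely $\bigl[\widetilde{\alpha}^{p-1}(a_1),\Delta\varphi(\dots)\bigr]_\alpha$ and $\bigl[\Delta\varphi(\dots),\widetilde{\alpha}^{p-1}(a_i)\bigr]_\alpha$, distribute the operators $L(a_i)$ over the tensor slots of $\Delta\varphi(\dots)$. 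The slots not carrying $\varphi$ reassemble into the first two sums of $\delta$, the bracket-insertion term $[a_i,a_j]_\alpha$ and the action term $L(a_i)\cdot z$, while the slot carrying $\varphi$ produces both the adjoint term $L(\alpha^p(a_i))\cdot\psi(\dots)$ and, for the distinguished last argument $z$, the term $\bigl(\psi(a_1,\dots,a_p,\ )\cdot a_{p+1}\bigr)\bullet_\alpha\alpha^p(z)$. The third (inner-bracket) sum of \eqref{Leibnizcohomo} applied to $\Delta\varphi$ matches the remaining contributions after reindexing; here Lemma \ref{3.1} is used to convert the nested expressions $L([a_i,a_j]_\alpha)\cdot(\cdots)$ into $L(\alpha(a_i))\cdot\bigl(L(a_j)\cdot(\cdots)\bigr)$ and thereby reconcile the bracket-of-brackets terms. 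Multiplicativity of $\alpha$ is used throughout to promote $\widetilde{\alpha}^{p-1}$ and $\widetilde{\alpha}^{p}$ to the powers $\alpha^{p-1},\alpha^{p},\alpha^{p+1}$ appearing in $\delta$.

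The main obstacle is bookkeeping rather than conceptual. One must verify that each Koszul/super sign generated by expanding \eqref{Leibnizcohomo} and \eqref{brackLei}---the external signs $(-1)^{i+|x_i|(\cdots)}$ of the coboundary together with the internal signs $(-1)^{|x||y^i|}$ of $\bullet_\alpha$---recombines exactly into the prescribed signs $(-1)^{i}(-1)^{(|a_{i+1}|+\dots)|a_i|}$, $(-1)^{i+1}(-1)^{(|\psi|+\dots)|a_i|}$ and the global $(-1)^{(|\psi|+\dots)|z|}(-1)^p$ in the definition of $\delta$. I expect the most delicate steps to be the matching of the final $\bullet_\alpha$-term, where the distinguished role of the last slot and the $n-1$ insertions $\psi(a_1,\dots,a_p,x_{p+1}^i)$ interact, and the single place where Lemma \ref{3.1} is genuinely needed, namely in absorbing $[a_i,a_j]_\alpha$ into the adjoint action. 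Once $d(\Delta\varphi)=\Delta(\delta\varphi)$ is established, the transfer argument of the first paragraph completes the proof.
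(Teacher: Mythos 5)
Your proposal is correct and follows essentially the paper's own route: the paper likewise splits $d=d_1+d_2+d_3$ and $\delta=\delta_1+\delta_2+\delta_3+\delta_4$, expands the bracket \eqref{brackLei} inside the Hom-Leibniz coboundary \eqref{Leibnizcohomo}, and matches $d_1\circ\Delta=\Lambda_1+\Lambda_2+\Delta\circ\delta_3$, $d_2\circ\Delta=\Delta\circ\delta_4$, $d_3\circ\Delta=\Delta\circ\delta_1+\Delta\circ\delta_2-\Lambda_1-\Lambda_2$, with the cross terms $\Lambda_1,\Lambda_2$ cancelling by direct computation (the paper does not need to invoke Lemma \ref{3.1} at this point, though that is an equivalent way to handle the nested brackets). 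The only shaky point is your injectivity claim for $\Delta$, which can fail when $\alpha$ is not injective (the $\alpha^{p-1}$-factors in \eqref{defdelta} can annihilate $\Delta\varphi$ for $\varphi\neq 0$), but this is harmless because your stated fallback --- deducing $\delta^{p+1}\circ\delta^p=0$ from Proposition \ref{opercob} rather than from $\Delta\circ\delta\circ\delta=0$ --- is exactly the paper's position, its closing remark establishing only $\Delta\circ\delta\circ\delta=0$.
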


\begin{proof}Let $\varphi\in\mathcal{C}^p_{\alpha,\alpha}(N,N)$ and $(a_1\cdots a_{p+1})\in\mathcal{L} $ where $a_j=x_{1}^j\otimes\cdots\otimes x_{n-1}^j.$\\
 Then $\Delta\varphi\in C^p_{\widetilde{\alpha},\widetilde{\alpha}}(\mathcal{L}(N),\mathcal{L}(N))$ and according to \eqref{Leibnizcohomo} and \eqref{Nambucohomo} we set $d=d_1+d_2+d_3$ and $\delta=\delta_1+\delta_2+\delta_3+\delta_4$ , where

\begin{align}
d_1\varphi(a_1,\cdots ,a_{p+1})& =\sum_{k=1}^p(-1)^{k-1} \big[ \alpha^{p-1}(a_k),\varphi(a_1,\cdots,\widehat{a_k},\cdots,a_{p+1})\big]\nonumber\\
d_2\varphi(a_1,\cdots ,a_{p+1})\ & =(-1)^{p+1} \big[ \varphi(a_1\otimes\cdots\otimes a_p),\alpha^{p-1}(a_{p+1}) \big]  \nonumber\\
d_3\varphi(a_1,\cdots ,a_{p+1})\ & =\sum_{1\leq k<j}^{p+1}{(-1)^k \varphi(\alpha(a_1)\otimes \cdots \otimes\widehat{a_k}\otimes \cdots\otimes\alpha(a_{j-1})\otimes [a_k,a_j]\otimes\alpha(a_{j+1})\otimes \cdots\otimes\alpha(a_{p+1}))}\nonumber
\end{align}
and
\begin{align*}
 &  \delta_1^p\psi(x_1,...,x_{p+1},z )= \sum_{1\leq i< j}^{p+1}(-1)^{i+(|x_{i+1}|+...+|x_{j-1}|)|x_i|}\psi\big(\alpha(x_1),...,\widehat{\alpha(x_i)},...,
                                  \alpha(x_{j-1}),[x_i,x_j]_{\alpha},...,\alpha(x_{p+1}),\alpha(z)\big) \\
&  \delta_2^p\psi(x_1,...,x_{p+1},z) =  \sum_{i=1}^{p+1}(-1)^{i+(|x_{i+1}|+...+|x_{p+1}|)|x_i|}\psi\big(\alpha(x_1),...,\widehat{\alpha(x_i)},...,
                                   \alpha(x_{p+1}),L(x_i)\cdot z\big) \\
 &  \delta_3^p\psi(x_1,...,x_{p+1},z) = \sum_{i=1}^{p+1}(-1)^{i+1+(|\psi|+|x_{1}|+...+|x_{i-1}|)|x_i|}\rho(\alpha^p(x_i))\Big(   \psi\big(x_1,...,\widehat{x_i},...,
                                  x_{p+1},z\big)\Big) \\
&   \delta_4^p\psi(x_1,...,x_{p+1},z) =  (-1)^{(|\psi|+|x_{1}|+...+|x_{p+1}|)|z|} \big(\psi(x_1,...,x_p,\ )\cdot x_{p+1}\big)\bullet_{\alpha}\alpha^p(z).
  \end{align*}
By \eqref{defdelta} we have
\begin{eqnarray*}
& & d_1\circ\Delta\varphi(a_1,\cdots ,a_{p+1})\\
  & =&\sum_{k=1}^p(-1)^{k-1}(-1)^{(|\varphi|+|a_1|+...+|a_{k-1}|)|a_k|} \big[ \alpha^{p-1}(a_k),\Delta\varphi(a_1,\cdots,\widehat{a_k},\cdots,a_{p+1})\big]\\
  & =&\sum_{k=1}^p(-1)^{k-1}(-1)^{(|\varphi|+|a_1|+...+|a_{k-1}|)|a_k|}\sum_{i=1}^{n-1} (-1)^{(|\varphi|+|a_{1}|+...+\widehat{|a_{k}|}+...+|a_{p}|)|a_{p+1}^{i-1}|}\\
&  &\big[ \alpha^{p-1}(a_k),\alpha^{p-1}(x_{p+1}^1)\otimes\cdots\otimes\varphi(a_1,\cdots,\widehat{a_k},\cdots,x_{p+1}^i)
 \otimes\cdots\otimes\alpha^{p-1}(x_{p+1}^{n-1})\big]\\
  & =&\sum_{k=1}^p(-1)^{k-1}(-1)^{(|\varphi|+|a_1|+...+|a_{k-1}|)|a_k|}\sum_{i>j}^{n-1} (-1)^{(|\varphi|+|a_{1}|+...+|a_{k}|+...+|a_{p}|)|a_{p+1}^{i-1}|}\\
   & &\alpha^{p}(x_{p+1}^1)\otimes\cdots\otimes L(\alpha^{p-1}(a_k)).\alpha^{p-1}(x_{p+1}^j)\otimes\cdots\otimes\varphi(a_1,\cdots,\widehat{a_k},\cdots,x_{p+1}^i)
 \otimes\cdots\otimes\alpha^{p}(x_{p+1}^{n-1})\\
  & +&\sum_{k=1}^p(-1)^{k-1}(-1)^{(|\varphi|+|a_{k+1}|+...+|a_{p}|)|a_k|}\sum_{j>i}^{n-1}(-1)^{(|\varphi|+|a_{1}|+...+|a_{k}|+...+|a_{p}|)|a_{p+1}^{i-1}|}\\
   & &  \alpha^{p}(x_{p+1}^1)\otimes\cdots\otimes \varphi(a_1,\cdots,\widehat{a_k},\cdots,x_{p+1}^i)
 \otimes\cdots \otimes L(\alpha^{p-1}(a_k)).\alpha^{p-1}(x_{p+1}^j)\otimes\cdots\otimes\alpha^{p}(x_{p+1}^{n-1})
 \\ & +&\sum_{k=1}^p(-1)^{k-1}(-1)^{(|\varphi|+|a_1|+...+|a_{k-1}|)|a_k|}\sum_{i=1}^{n-1} (-1)^{(|\varphi|+|a_{1}|+...+|a_{k}|+...+|a_{p}|)|a_{p+1}^{i-1}|}\\
   & &  \alpha^{p}(x_{p+1}^1)\otimes\cdots\otimes L(\alpha^{p-1}(a_k)).\varphi(a_1,\cdots,\widehat{a_k},\cdots,x_{p+1}^i)
 \otimes\cdots\otimes\alpha^{p}(x_{p+1}^{n-1})\end{eqnarray*}
 \begin{eqnarray*}
   & =&\sum_{k=1}^p(-1)^{k-1}(-1)^{(|\varphi|+|a_1|+...+|a_{k-1}|)|a_k|}\sum_{i>j}^{n-1} (-1)^{(|\varphi|+|a_{1}|+...+|a_{k}|+...+|a_{p}|)|a_{p+1}^{i-1}|}\\
   & &\alpha^{p}(x_{p+1}^1)\otimes\cdots\otimes L(\alpha^{p-1}(a_k)).\alpha^{p-1}(x_{p+1}^j)\otimes\cdots\otimes\varphi(a_1,\cdots,\widehat{a_k},\cdots,x_{p+1}^i)
 \otimes\cdots\otimes\alpha^{p}(x_{p+1}^{n-1})\\
  & +&\sum_{k=1}^p(-1)^{k-1}(-1)^{(|\varphi|+|a_{k+1}|+...+|a_{p}|)|a_k|}\sum_{j>i}^{n-1}(-1)^{(|\varphi|+|a_{1}|+...+|a_{k}|+...+|a_{p}|)|a_{p+1}^{i-1}|}\\
   & &  \alpha^{p}(x_{p+1}^1)\otimes\cdots\otimes \varphi(a_1,\cdots,\widehat{a_k},\cdots,x_{p+1}^i)
 \otimes\cdots \otimes L(\alpha^{p-1}(a_k)).\alpha^{p-1}(x_{p+1}^j)\otimes\cdots\otimes\alpha^{p}(x_{p+1}^{n-1})
 \\
    & +&\Delta\circ\delta_3\circ\varphi(a_1,\cdots ,a_{p+1})\\
  & =&\Lambda_1+\Lambda_2+\Delta\circ\delta_3\circ\varphi(a_1,\cdots ,a_{p+1})\nonumber
\end{eqnarray*}
where
\begin{align*}
 & \Lambda_1=\sum_{k=1}^p(-1)^{k-1}(-1)^{(|\varphi|+|a_1|+...+|a_{k-1}|)|a_k|}\sum_{i>j}^{n-1} (-1)^{(|\varphi|+|a_{1}|+...+|a_{k}|+...+|a_{p}|)|a_{p+1}^{i-1}|}\\
   & \alpha^{p}(x_{p+1}^1)\otimes\cdots\otimes L(\alpha^{p-1}(a_k)).\alpha^{p-1}(x_{p+1}^j)\otimes\cdots\otimes\varphi(a_1,\cdots,\widehat{a_k},\cdots,x_{p+1}^i)
 \otimes\cdots\otimes\alpha^{p}(x_{p+1}^{n-1})\\
  & \Lambda_2=\sum_{k=1}^p(-1)^{k-1}(-1)^{(|\varphi|+|a_{k+1}|+...+|a_{p}|)|a_k|}\sum_{j>i}^{n-1}(-1)^{(|\varphi|+|a_{1}|+...+|a_{k}|+...+|a_{p}|)|a_{p+1}^{i-1}|}\\
   &   \alpha^{p}(x_{p+1}^1)\otimes\cdots\otimes \varphi(a_1,\cdots,\widehat{a_k},\cdots,x_{p+1}^i)
 \otimes\cdots \otimes L(\alpha^{p-1}(a_k)).\alpha^{p-1}(x_{p+1}^j)\otimes\cdots\otimes\alpha^{p}(x_{p+1}^{n-1}).
\end{align*}
Similarly we can prove that
\begin{align}
 d_2\circ\Delta\varphi(a_1,\cdots ,a_{p+1})
 \ & =\Delta\circ\delta_4\varphi(a_1,\cdots ,a_{p+1})
\end{align}
and
\begin{align}
& d_3\Delta\circ\varphi(a_1,\cdots ,a_{p+1})=\Delta\circ\delta_1\varphi(a_1,\cdots ,a_{p+1})+\Delta\circ\delta_2\varphi(a_1,\cdots ,a_{p+1})-\Lambda_1-\Lambda_2.
\end{align}
Finally we have
$$d\circ\Delta=d_1\circ\Delta+d_2\circ\Delta+d_3\circ\Delta=\Delta\circ\delta_3+\Delta\circ\delta_4+
\Delta\circ\delta_1+\Delta\circ\delta_2=\Delta\circ\delta$$
where $\delta=\delta_1+\delta_2+\delta_3+\delta_4$ as defined in Proof  \ref{opercob}.
\end{proof}
\begin{remark}If $d^2=0$, then $\delta^2=0$.\\
In fact, since $d\circ\Delta=\Delta\circ\delta$, then $$\Delta\circ\delta^2 =\Delta\circ\delta\circ\delta =d\circ\Delta\circ\delta=d\circ. d\circ\Delta=d^2\circ\Delta=0.$$
\end{remark}

\bibliographystyle{amsplain}
\providecommand{\bysame}{\leavevmode\hbox to3em{\hrulefill}\thinspace}
\providecommand{\MR}{\relax\ifhmode\unskip\space\fi MR }
\providecommand{\MRhref}[2]{%
  \href{http://www.ams.org/mathscinet-getitem?mr=#1}{#2}
}
\providecommand{\href}[2]{#2}

\end{document}